\newtheorem{theorem}{Theorem}[section]
\newtheorem{proposition}[theorem]{Proposition}
\newtheorem{lemma}[theorem]{Lemma}
\newtheorem{claim}[theorem]{Claim}
\newtheorem{corollary}[theorem]{Corollary}
\newtheorem{remark}[theorem]{Remark}
\newtheorem{notation}[theorem]{Notation}
\newtheorem{question}[theorem]{Question}
\newtheorem{definition}[theorem]{Definition}
\begin{document}

\title{Reconstruction of the Geometric Structure of a Set of Points in the Plane
from its Geometric Tree Graph}

\author{
Chaya Keller and Micha A. Perles \\
Einstein Institute of Mathematics, Hebrew University\\
Jerusalem 91904, Israel\\
}

\maketitle

\begin{abstract}
Let $P$ be a finite set of points in general position in the plane.
The structure of the complete graph $K(P)$ as a geometric graph includes,
for any pair $[a,b],[c,d]$ of vertex-disjoint edges, the information whether
they cross or not.

The simple (i.e., non-crossing) spanning trees (SSTs) of $K(P)$ are the vertices
of the so-called Geometric Tree Graph of $P$, $G(P)$. Two such vertices are
adjacent in $G(P)$ if they differ in exactly two edges, i.e., if one can be
obtained from the other by deleting an edge and adding another edge.

In this paper we show how to reconstruct from $G(P)$ (regarded as an abstract graph)
the structure of $K(P)$ as a geometric graph. We first identify within $G(P)$
the vertices that correspond to spanning stars. Then we regard each star $S(z)$
with center $z$ as the representative in $G(P)$ of the vertex $z$ of $K(P)$.
(This correspondence is determined only up to an automorphism of $K(P)$ as a
geometric graph.) Finally we determine for any four distinct stars $S(a), S(b),
S(c),$ and $S(d)$, by looking at their relative positions in $G(P)$, whether the
corresponding segments cross.
\end{abstract}

\section{Introduction}

Graph reconstruction is an old and extensive research topic. It dates back to the
{\it Reconstruction Conjecture} raised by Kelly and Ulam in 1941 (see~\cite{Kelly,Ulam}),
which asserts that every graph on at least three vertices is uniquely determined by its collection of
vertex deleted subgraphs.

As a natural extension of the Reconstruction Conjecture, numerous papers considered
either reconstruction of structures other then graphs (a research topic proposed
by Ulam in 1960), or reconstructions of graphs from other information. In the first direction,
reconstructed objects include colored graphs, hypergraphs, matroids, relations, and other
classes. In the second direction, the ``information'' may be $k$-vertex deleted subgraphs,
edge-deleted subgraphs, elementary contractions, spanning trees, etc. In addition,
various papers considered reconstruction of {\it parameters of the graph} instead of its full
structure. Such parameters include the order, the degree sequence, planarity, the types of
spanning trees, and many others (see the surveys~\cite{Bondy,Rama} for references).

In this paper, we study the problem of reconstructing the geometric structure of a
set of points in the plane from its geometric tree graph.

\emph{Tree graphs} were defined in 1966 by Cummins~\cite{Cummins} in the context of listing
all spanning trees of a given connected graph effectively. The {\it tree graph} $T(G)$ of a graph
$G$ has the spanning trees of $G$ as its vertices, and two spanning trees are adjacent if
one can be obtained from the other by deleting an edge and adding another edge. These graphs
were studied in a number of papers and were shown to be Hamiltonian and to have the maximal
possible connectivity (see, e.g.,~\cite{HH72,Liu88}).

In 1996, Avis and Fukuda~\cite{AvisFukuda} defined the \emph{geometric tree graph}, as the
counterpart of tree graphs in the geometric graph setting.
\begin{definition}
Let $P$ be a finite point set in general position in the plane. The \emph{geometric tree graph} $G(P)$ is defined
as follows. The vertices of $G(P)$ are the simple (i.e., non-crossing) spanning trees (SSTs)
of $K(P)$. Two such vertices are adjacent in $G(P)$ if they differ in exactly two edges, i.e.,
if one can be obtained from the other by deleting an edge and adding another edge.
\end{definition}
Geometric tree graphs were shown to be connected~\cite{AvisFukuda}, and upper and lower
bounds on their diameter were established~\cite{AvisFukuda,Hernando1}.

\medskip

We study a reconstruction problem for geometric graphs: Is the geometric tree graph $G(P)$
sufficient for ``reconstructing'' the structure of $K(P)$? In a sense, this question is a geometric
counterpart of the work of Sedl\'{a}\v{c}ek~\cite{Sedlacek}, who studied the question whether a
graph can be reconstructed from its spanning trees. As we deal with a geometric setting, we seek
to reconstruct the \emph{geometric structure} of the graph.
\begin{definition}
Let $P$ be a finite set of points in general position in the plane. The \emph{geometric structure}
of the complete graph $K(P)$ as a geometric graph includes, for any pair $[a,b],[c,d]$ of
vertex-disjoint edges, the information whether they cross or not.
\end{definition}
Our main result is the following:
\begin{theorem}\label{Thm:Main}
For any finite set $P$ of points in general position in the plane, the geometric structure
of $K(P)$ can be reconstructed from the geometric tree graph $G(P)$.
\end{theorem}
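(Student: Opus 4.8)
The plan is to carry out the three stages announced in the introduction. Before that, note that the target is modest in one respect: by definition we only need to recover, for every pair of vertex-disjoint edges $[a,b],[c,d]$, whether they cross — and since mirror-imaging $P$ leaves $G(P)$ unchanged, the stars can be matched to the points of $P$ only up to an automorphism of $K(P)$ as a geometric graph, which is exactly what the statement allows. Two elementary facts will be used repeatedly: every spanning star $S(z)$ is non-crossing, hence is one of the $n=|P|$ vertices of $G(P)$ that are stars; and for $n\ge 4$ no two stars are adjacent in $G(P)$, since $S(a)$ and $S(b)$ share only the edge $[a,b]$ and therefore differ in $2(n-2)\ge 4$ edges.

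\emph{Stage 1 (recognising the stars).} I would search for a property of a vertex $T$ of $G(P)$, formulated purely in terms of the abstract graph $G(P)$ — its neighbourhood, the maximal cliques through it, and the adjacency pattern among those cliques — that holds if and only if $T$ is a spanning star. Crude invariants are not enough: the $G(P)$-degree of a star is configuration-dependent (a star centred at a hull vertex and one centred at an interior point typically have different degrees), and an ill-chosen non-star can have smaller degree than some star, so there is no ``extremal'' shortcut. The structural feature to exploit is that all edges of $S(z)$ pass through $z$, so every $G(P)$-move out of $S(z)$ is a leaf rotation $S(z)-[z,a]+[a,b]$; consequently the neighbourhood of $S(z)$ is covered by the $n-1$ cliques $C(z,a)$, where $C(z,a)$ consists of all SSTs containing the forest $S(z)-[z,a]$, these cliques meet pairwise only at $S(z)$, and the only $G(P)$-edges joining two distinct $C(z,a)$'s are of a rigid ``transposition'' type (exchanging which of two leaves is subordinate to the other). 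I expect that this ``book of cliques glued along a single spine vertex'' picture, with the right quantitative side conditions, characterises stars among all vertices of $G(P)$; proving this — in particular ruling out that some non-star vertex accidentally presents the same local picture — is \textbf{the first main obstacle}. Having identified the set $\mathcal S$ of star-vertices, we read off $|P|=n=|\mathcal S|$.

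\emph{Stage 2 (naming the points).} This step is essentially free: we declare the elements of $\mathcal S$ to be the points of $P$ through $S(z)\mapsto z$. Because $G(P)$ is recovered only as an abstract graph, this labelling is canonical only up to an automorphism of $K(P)$ as a geometric graph — precisely the freedom in the theorem. Hereafter $a,b,c,d$ name both points of $P$ and the stars $S(a),S(b),S(c),S(d)$.

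\emph{Stage 3 (reading off the crossings).} It remains to give, for any four distinct stars, a predicate expressible in terms of their relative position in $G(P)$ equivalent to ``$[a,b]$ crosses $[c,d]$'' — equivalently, to ``the four points are in convex position with $\{a,b\}$ separating $\{c,d\}$''. For $n=4$ this can be checked by hand: one finds that the four points are in convex position iff exactly one of the three pairings of $\{a,b,c,d\}$ has the property that both of its star-pairs have two common neighbours in $G(P)$, and that pairing is then the crossing one (when the four points are not in convex position, all three pairings have this property and no crossing occurs). For $n\ge 5$, however, two distinct stars have \emph{no} common neighbour at all: a common neighbour $T$ of $S(a)$ and $S(b)$ would have to contain at least $n-2$ edges at $a$ and at least $n-2$ edges at $b$, forcing $2(n-2)-1\le n-1$, i.e.\ $n\le 4$. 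Hence for general $n$ the crossing test must be read off from larger configurations. The plan is to single out, among the vertices of $G(P)$ — which we can now describe abstractly, the stars being known — ``witness'' SSTs in which $a,b,c,d$ occur as leaves while the remaining $n-4$ points form a fixed backbone, chosen so that the $G(P)$-moves touching only $a,b,c,d$ faithfully reproduce the four-point situation; the crossing of $[a,b]$ and $[c,d]$ is then detected, uniformly in $n$, by the local structure of $G(P)$ around these witnesses (short paths and small cycles through them), correctness reducing to the finite case analysis over the order types of four points. Verifying that suitable witnesses always exist, that the backbone can be prevented from obstructing the relevant moves, and that the extracted bit is independent of the chosen witness is \textbf{the second main obstacle}, and is where the bulk of the work lies.
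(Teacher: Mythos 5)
Your proposal is an outline rather than a proof: at both of the places where the real mathematical content lies you explicitly stop and label the needed argument an ``obstacle'' without supplying it. Concretely, Stage~1 asserts that a ``book of cliques glued along a single spine vertex'' picture characterises the spanning stars among the vertices of $G(P)$, but the hard direction --- ruling out that a non-star presents the same local picture --- is exactly what needs proof, and no argument is given. (The paper's own characterisation, Theorem~\ref{Thm:Stars}, is that $T$ is a star iff every U-clique through $T$ has size $3$; even stating this requires first being able to tell, from the abstract graph, which max-cliques are U-cliques and which are I-cliques, and that classification occupies all of Section~\ref{sec:max-cliques}: degenerate cliques are handled via Proposition~\ref{Prop:Deg-I-clique} and Lemma~\ref{Lemma:I-clique-not-leaf}, and the labels are then propagated to all max-cliques using the connectivity of the auxiliary graphs $D_T$, Theorem~\ref{Thm:D_T}. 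Your proposed criterion is different in form, but it would need a comparable amount of work, none of which appears.) Similarly, Stage~3 proposes ``witness'' SSTs with $a,b,c,d$ as leaves on a fixed backbone and asserts that the crossing bit can be read off ``uniformly in $n$'' from the local structure around them; you yourself identify the existence of such witnesses, the non-interference of the backbone, and the well-definedness of the extracted bit as ``where the bulk of the work lies,'' and none of it is done. A proof that defers its two central lemmas is not a proof.

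For comparison, the paper's route through Stage~3 is quite different and worth knowing: it identifies $pq$-brushes by the distance identity $d_{G(P)}(T,S(p))+d_{G(P)}(T,S(q))=n-2$ (Theorem~\ref{Thm:Brush}), then determines for each leaf $x$ of a brush which centre it attaches to, via the criterion that the two leaf edges at $x,y$ share their internal endpoint iff every $xy$-brush is at distance $\geq n-2$ (Proposition~\ref{Prop:Brush3}, whose proof needs the visibility Lemma~\ref{Lemma:Sees} and an explicit $(n-3)$-step path construction), and finally reads off crossings from Proposition~\ref{Prop:Brush4}: $[p,x]$ and $[q,y]$ cross iff no $pq$-brush contains both. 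Your correct observations --- that stars are pairwise non-adjacent, that for $n\geq 5$ two stars have no common neighbour, and that the labelling is only determined up to $\mathrm{Aut}(K(P))$ --- are sound but peripheral; to turn the proposal into a proof you must actually establish a star characterisation and a crossing test, and at present neither is there.
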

While the proof of the theorem is elementary, it is rather complex, and consists of several
stages:
\begin{enumerate}
\item \textbf{Maximal cliques in $G(P)$.} We study thoroughly the structure of maximal cliques
in $G(P)$. We divide these cliques into two types, called ``union max-cliques'' and
``intersection max-cliques'', and show that given a maximal clique in $G(P)$, one can determine
its type. This study spans Section~\ref{sec:max-cliques}.

\item \textbf{Stars and brushes in $G(P)$.} We show how to identify the vertices of $G(P)$
that correspond to spanning stars and spanning brushes (i.e., spanning trees of diameter 3 with
a single internal edge), by examining the max-cliques to which they belong. The stars
are determined only up to an automorphism of $K(P)$ (obviously, one cannot do better), and once
they are fixed, the brushes are determined uniquely. This part of the proof is presented in
Section~\ref{sec:stars-and-brushes}.

\item \textbf{The geometric structure of $K(P)$.} We show how the geometric structure of $K(P)$ can
be derived from information on the brushes in $G(P)$. This part is presented in
Section~\ref{sec:geom-structure}.
\end{enumerate}

In the last part of the paper, Section~\ref{sec:general}, we consider abstract (i.e.,
non-geometric) graphs, and show that a variant of the argument developed in
Sections~\ref{sec:max-cliques} and~\ref{sec:stars-and-brushes}
can be used to prove the following result:
\begin{theorem}\label{Thm:Main-General}
For any $n \in \mathbb{N}$, the automorphism group of the tree graph of $K_n$ is 
isomorphic to $\mathrm{Aut}(K_n) \cong S_n$.
\end{theorem}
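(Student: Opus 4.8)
The plan is to show that the natural homomorphism $\Phi\colon S_n\to\mathrm{Aut}(T(K_n))$, in which $\sigma\in S_n$ relabels the $n$ vertices of $K_n$ and thereby acts on its spanning trees while preserving the relation ``differ in exactly one edge'', is an isomorphism. For $n\ge 3$ it is injective: if $\sigma\ne\mathrm{id}$ then $\sigma$ moves some vertex $v$, hence moves the spanning star $S(v)$, so $\Phi(\sigma)\ne\mathrm{id}$; thus the whole content is surjectivity. The cases $n\le 4$ are verified directly (for instance $T(K_3)=K_3$), so I assume $n\ge 5$ and fix an arbitrary $\phi\in\mathrm{Aut}(T(K_n))$; the goal is to produce $\sigma\in S_n$ with $\phi=\Phi(\sigma)$. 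Throughout, $V$ denotes the vertex set of $K_n$.

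\emph{Locating the stars.} A neighbour of a spanning tree $T$ in $T(K_n)$ has the form $T-e+f$ with $e\in T$, $f\notin T$, and for fixed $e$ the admissible $f$ are exactly the $a_e(n-a_e)-1$ cross-edges other than $e$, where $a_e,n-a_e$ are the sizes of the two components of $T-e$. Hence $\deg_{T(K_n)}(T)=\big(\sum_{e\in T}a_e(n-a_e)\big)-(n-1)$; each summand is at least $n-1$, with equality iff $e$ meets a leaf, and for $n\ge4$ a spanning tree all of whose edges meet a leaf is a star. Therefore the $n$ spanning stars are exactly the minimum-degree vertices of $T(K_n)$, so $\phi$ permutes them; letting $\sigma\in S_n$ be the induced permutation of their centres and replacing $\phi$ by $\Phi(\sigma)^{-1}\circ\phi$, I may assume that $\phi$ fixes every star $S(v)$, and it suffices to prove that such a $\phi$ is the identity.

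\emph{The action near a star.} Every neighbour of $S(v)$ is a \emph{star-brush} $B(v;w,y):=S(v)-vw+wy$ for distinct $w,y\in V\setminus\{v\}$, and comparing edge sets shows that two distinct star-brushes $B(v;w,y),B(v;w',y')$ are adjacent in $T(K_n)$ iff $w=w'$ or $\{w,y\}=\{w',y'\}$. Thus the graph induced on $N(S(v))$ consists of $n-1$ vertex-disjoint cliques $Q_w:=\{B(v;w,y):y\in V\setminus\{v,w\}\}\cong K_{n-2}$, any two of which, $Q_w$ and $Q_{w'}$, are joined by the single edge $\{B(v;w,w'),B(v;w',w)\}$. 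Since $\phi$ fixes $S(v)$ it induces an automorphism of this graph; for $n\ge5$ the $Q_w$ are its only maximal cliques of size $\ge3$, so $\phi$ permutes them via some $\tau_v\in\mathrm{Sym}(V\setminus\{v\})$, and preservation of the connecting edges then forces $\phi(B(v;w,y))=B(v;\tau_v w,\tau_v y)$ for all distinct $w,y\in V\setminus\{v\}$. To see $\tau_v=\mathrm{id}$, observe that $B(v;w,y)$ is the double star with centres $v$ and $y$ in which $y$ carries the single leaf $w$, so $d\big(B(v;w,y),S(y)\big)=n-3$; whereas if $y'\ne y$ then $y$ is a leaf of $B(v;w',y')$, these two trees share only one edge, and $d\big(B(v;w',y'),S(y)\big)\ge n-2$. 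Since $\phi$ fixes $S(y)$ and preserves distances, the ``internal partner'' of $\phi(B(v;w,y))$ must again be $y$, i.e.\ $\tau_v(y)=y$; as $y$ was arbitrary, $\tau_v=\mathrm{id}$ and $\phi$ fixes every star-brush. In particular the correspondences $v\leftrightarrow S(v)$ and $(v,w,y)\leftrightarrow B(v;w,y)$ are now recovered from $T(K_n)$ alone.

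\emph{From stars and brushes to the identity.} It remains to upgrade ``$\phi$ fixes every star and every star-brush'' to ``$\phi=\mathrm{id}$'', and this is the step I expect to be the main obstacle. I would argue by induction on the number of internal (non-leaf) vertices of a spanning tree --- equivalently, along increasing distance from the set of stars in $T(K_n)$. The low cases (one internal vertex: stars; two: spanning trees of diameter $3$) are handled using the star-brushes together with the maximal cliques of $T(K_n)$ of the form $\{F+e : e\text{ joins the two components of a fixed }(n-2)\text{-edge forest }F\}$ (the ``union'' max-cliques, cf.\ Section~\ref{sec:max-cliques}): the largest such clique through a brush has size $2(n-2)$, and its two-element side recovers the brush's leaf-pair, which --- with the labelling above --- pins the brush down, after which the remaining diameter-$3$ trees are identified by their adjacencies to star-brushes. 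For the inductive step I would show that a spanning tree with $k\ge3$ internal vertices is the unique neighbour, of some already-fixed tree nearer the stars, whose pattern of adjacencies to and distances from the already-fixed star-brushes matches its own --- equivalently, that the star-brushes form a resolving set of $T(K_n)$. Once $\phi$ is shown to fix every vertex of $T(K_n)$, we obtain $\phi=\Phi(\sigma)$, hence $\mathrm{Aut}(T(K_n))=\Phi(S_n)\cong S_n$; combined with the standard isomorphism $\mathrm{Aut}(K_n)\cong S_n$, the theorem follows.
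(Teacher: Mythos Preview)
Your first two steps are correct and take a route the paper does not: you identify stars by minimum degree (the paper instead uses eccentricity---the stars are exactly the centre of $T(K_n)$), and your local analysis of the neighbourhood $N(S(v))$, with its $n-1$ cliques $Q_w$ joined pairwise by single bridging edges, is a clean argument that pins down every star-brush once the stars are fixed. The paper never isolates these trees.

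The genuine gap is your third step. You concede it (``the main obstacle'') and offer only a sketch, which moreover contains a slip: the cliques $\{F+e: e\text{ joins the components of }F\}$ are \emph{I}-cliques (common intersection $F$), not U-cliques, and your size claim $2(n-2)$ holds for star-brushes, not general brushes. But your closing parenthetical---that the star-brushes form a resolving set---is exactly right and is far easier to establish than the induction you outline. Since in the abstract setting $d(S,T)=\tfrac12|S\triangle T|$, one has $d(T,S(v))=n-1-\delta_T(v)$ and
\[
d\big(T,B(v;w,y)\big)-d\big(T,S(v)\big)=[vw\in T]-[wy\in T]
\]
(Iverson brackets); summing over all $y\in V\setminus\{v,w\}$ gives $(n-1)[vw\in T]-\delta_T(w)$, and since both valences are already known, $[vw\in T]$ is determined. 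That single computation replaces your entire Step~3.

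For contrast, the paper never argues via a resolving set. After recovering all valences from distances to stars (the same formula as above), it invests work in distinguishing U- from I-cliques of the ambiguous size $n-1$, and then decides for each pair $p,q$ whether $[p,q]\in E(T)$ by a case split: if both are internal, look for a neighbour of $T$ in which both valences drop; if $p$ is a leaf, find a far leaf $p'$, pass to the U-clique through $T$ and $T\cup\{pp'\}\setminus\{e\}$, and read off $p$'s unique neighbour from the valence changes along that clique. Once the resolving-set line is supplied, your route is shorter and sidesteps the max-clique analysis altogether.
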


Our treatment of the geometric reconstruction problem (i.e., $K(P)$ from $G(P)$) falls
short of this. It leaves open the (quite implausible) possibility that the geometric tree 
graph $G(P)$ has an automorphism $\eta$, other than the identity, that fixes each star
and each brush. This leaves open, for further research, the following question.
\begin{question}
Is this true that for any finite set $P$ of points in general position in the plane, 
we have $\mathrm{Aut}(G(P)) \cong \mathrm{Aut}(K(P))$, where $G(P)$ is treated as an 
abstract graph, whereas $K(P)$ is treated as a geometric graph?
\end{question}

\section{Maximal Cliques in $G(P)$}
\label{sec:max-cliques}

In this section we study the structure of maximal (with respect to inclusion) cliques in the
geometric tree graph $G(P)$. We divide the maximal cliques into two types, called U-cliques
and I-cliques, and our ultimate goal is to determine, given a maximal clique in $G(P)$, what
is its type.

We start in Section~\ref{sec:sub:notations} with a few definitions and notations, to be used
throughout the paper. In Sections~\ref{sec:sub:max-cliques-basics} and~\ref{sec:sub:geom-meaning}
we describe a classification of the maximal cliques into two types, presented originally in~\cite{Virginia-PhD},
and discuss basic properties of both types. In order to distinguish between general combinatorial
considerations and geometric arguments specific to SSTs, we start in
Section~\ref{sec:sub:max-cliques-basics} with a general combinatorial
framework, and leave the geometric arguments to Section~\ref{sec:sub:geom-meaning}.

In Sections~\ref{sec:sub:char-degenerate} and~\ref{sec:sub:identify-degenerate} we study
{\it degenerate} maximal cliques, i.e., maximal cliques of size 2. In
Section~\ref{sec:sub:char-degenerate} we give a geometric characterization of the situation
when a maximal clique is degenerate, and in Section~\ref{sec:sub:identify-degenerate}
we show how to identify whether a given degenerate maximal clique is a U-clique or an
I-clique. Finally, in Section~\ref{sec:sub:identify-non-degenerate} we show how to determine 
whether a given {\it non-degenerate} maximal clique is a U-clique or an I-clique. 

\subsection{Definitions and Notations}
\label{sec:sub:notations}

\begin{notation}
The following notations and conventions are used throughout the paper. The straight
line that passes through points $x,y$ is denoted by $\ell(x,y)$. The vertex and
edge sets of a graph $G$ are denoted by $V(G)$ and $E(G)$, respectively.
Since the set $P$ is fixed, we shall always identify a spanning subgraph of $K(P)$
(and, in particular, a spanning subtree) with its set of edges.
\end{notation}

In our study we shall extensively use {\it maximal cliques} of $G(P)$. These are defined as follows:
\begin{definition}
A \emph{max-clique} in a graph $G$ is a maximal (with respect to inclusion) clique
included in $G$. Since any max-clique is a complete graph on its vertex set, we shall
identify a max-clique with its set of vertices.
\end{definition}
We shall use the following observation on the structure of $G(P)$, proved by Avis and
Fukuda~\cite{AvisFukuda}.
\begin{claim}[~\cite{AvisFukuda}, Lemma 3.15]\label{Claim:G(P)-connected}
For any set $P$ of points in general position in the plane, $G(P)$ is connected and
its diameter is $\leq 2|P|-4$.
\end{claim}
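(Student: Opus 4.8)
The plan is to fix a vertex $p$ of the convex hull of $P$, to set $n:=|P|$, and to use as a canonical vertex of $G(P)$ the spanning star $S(p)$, i.e.\ the simple spanning tree consisting of the $n-1$ segments $[p,x]$, $x\in P\setminus\{p\}$; it is non-crossing because any two of its edges share the endpoint $p$. I would show that every SST $T$ is joined to $S(p)$ in $G(P)$ by a path of length at most $(n-1)-\deg_T(p)\le n-2$. Granting this, for any two SSTs $T_1,T_2$ the concatenation of a $T_1$-to-$S(p)$ path with an $S(p)$-to-$T_2$ path is a walk in $G(P)$ of length at most $2(n-2)=2n-4$, so $G(P)$ is connected and has diameter at most $2|P|-4$. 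The path from $T$ to $S(p)$ is produced one edge of $G(P)$ at a time, each step raising $\deg(p)$ by exactly one; so the whole proof reduces to the single step: if $T$ is an SST with $\deg_T(p)=d\le n-2$, then $T$ is adjacent in $G(P)$ to an SST $T'$ with $\deg_{T'}(p)=d+1$. Iterating, the degree of $p$ climbs $d\to d+1\to\cdots\to n-1$, and once it reaches $n-1$ the current tree is $S(p)$; the number of steps is $(n-1)-d\le n-2$.

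I would prove this step from the geometric claim that, when $\deg_T(p)<n-1$, there is a non-neighbor $w$ of $p$ in $T$ such that the segment $[p,w]$ crosses no edge of $T$. Given the claim, $T\cup\{[p,w]\}$ contains a unique cycle $Z$; since $w\notin N_T(p)$ the path from $p$ to $w$ in $T$ has at least two edges, so $Z$ contains an edge $e$ not incident to $p$. Put $T':=(T\setminus\{e\})\cup\{[p,w]\}$: deleting an edge of the unique cycle of $T\cup\{[p,w]\}$ leaves a spanning tree; it is non-crossing because we adjoined a segment crossing nothing in $T$ and then removed an edge; and $\deg_{T'}(p)=d+1$ because we added an edge at $p$ and deleted one not at $p$. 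Hence $\{T,T'\}\in E(G(P))$ is a flip of the desired kind.

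It remains to establish the geometric claim, and this is the step I expect to be the real obstacle. Here the hypothesis $p\in\mathrm{conv}(P)$ is used essentially: the points of $P\setminus\{p\}$ span, as seen from $p$, an angle strictly smaller than $\pi$, so they acquire a genuine linear order $p_1,\dots,p_{n-1}$ by angle around $p$, and the extreme points $p_1,p_{n-1}$ are joined to $p$ by convex-hull edges, which cross nothing; so if either of them is a non-neighbor of $p$ we are done, and we may assume $p_1,p_{n-1}\in N_T(p)$. I would then argue via the visibility region of $p$ with respect to $T$: its outer ``first-barrier'' boundary inside the cone spanned by $p_1,\dots,p_{n-1}$ is a polygonal chain whose corners are vertices of $T$ that are visible from $p$ (every neighbor of $p$ is such a vertex, since its edge to $p$ lies in $T$ and hence crosses nothing).

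The crux is then to show that this chain must contain a vertex of $T$ that is \emph{not} a neighbor of $p$ — equivalently, that the non-neighbors of $p$ cannot all hide strictly behind the chain. For this I would take a non-neighbor $q$ of $p$ at minimum $T$-distance from $p$; since the vertex following the first vertex $v_1$ on that path cannot be adjacent to $p$ (that would close a $3$-cycle in $T$), this distance is forced to be exactly $2$, say along $p,v_1,q$ with $v_1\in N_T(p)$; then any edge of $T$ that separates $p$ from $q$ crosses neither $[p,v_1]$ nor $[v_1,q]$ (both in $T$), hence has an endpoint inside the triangle $\triangle pv_1q$, and a descent on this endpoint — which is ``more visible'' from $p$ — yields a visible non-neighbor. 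Pinning down exactly which edges of $T$ can occlude a non-neighbor of $p$ — using that $T$ is a non-crossing tree (its edges pairwise non-crossing, enclosing no region) together with the convexity of the cone of directions at $p$ — is the delicate part; everything else is bookkeeping.
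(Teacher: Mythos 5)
The paper does not prove this claim itself---it is quoted verbatim from Avis and Fukuda---but your overall strategy is exactly the standard one: fix a hull vertex $p$, show that every SST can be pushed to the star $S(p)$ by at most $(n-1)-\deg_T(p)\le n-2$ flips, each raising $\deg(p)$ by one, and concatenate two such paths to get diameter $\le 2n-4$. Your reduction of the whole statement to the single geometric claim (if $\deg_T(p)<n-1$, then some non-neighbor $w$ of $p$ has $[p,w]$ crossing no edge of $T$) is correct and complete, including the observation that the unique cycle of $T\cup\{[p,w]\}$ has length at least $3$ and therefore contains an edge not incident to $p$ whose removal yields the desired neighbor $T'$.

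The gap is in that geometric claim, exactly where you flagged it. Two concrete problems. First, the inference ``a blocking edge crosses neither $[p,v_1]$ nor $[v_1,q]$, hence has an endpoint inside $\triangle p v_1 q$'' fails for edges of $T$ incident to $v_1$: such an edge $[v_1,c]$ can leave the triangle through the corner $v_1$ and cross the opposite side $(p,q)$ without having any endpoint in the interior of the triangle. Second, and more seriously, the ``descent'' is never defined. When the blocking edge does have an endpoint $c$ inside $\triangle p v_1 q$, that endpoint may itself be a neighbor of $p$, in which case you have produced no new non-neighbor to recurse on; and when $c$ is a non-neighbor, the distance-$2$ non-neighbor on the $T$-path from $p$ to $c$ (the point you would restart with) need not lie inside the original triangle, so none of the natural measures (area, number of enclosed points of $P$, angle at $p$) is shown to decrease. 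Without a well-founded measure the induction does not close. The claim itself is true, but it requires a genuinely careful first-obstruction/visibility argument of the kind the paper carries out for double stars in Lemma~\ref{Lemma:Sees}---which cannot be borrowed directly, since that lemma applies only to graphs all of whose edges emanate from two fixed vertices. As it stands, the crux of your proof is a plausible but unproven lemma, so the argument is incomplete.
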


\subsection{Types of Max-cliques in Tree Graphs}\label{sec:sub:max-cliques-basics}

A generic combinatorial way to treat a tree graph is to consider a base set $X$
and a graph $G$ whose vertices are $q$-subsets of $X$ (not necessarily all the $q$-subsets),
such that two vertices $A, B \in V(G)$ are adjacent if and only if $|A \triangle B|=2$.
(In the case of the (geometric) tree graph of a point set $P$, we have $X=E(K(P))$,
$q=|P|-1$, and the vertices of $G$ are the sets of edges of (simple) spanning trees of $K(P)$.)

Let $A, B$ be two adjacent vertices of $G$. Denote
\[
I=A \cap B, \qquad D = A \triangle B \qquad \mbox{ and } \qquad U = A \cup B = I \cup D.
\]
A third vertex $C \in V(G)$ is a common neighbor of $A$ and $B$ if and only if:
\begin{enumerate}
\item $C \cap D = \emptyset$ and $I \subset C$. In this case, $C$ is obtained from $I$ by
adding a single element.

\item $D \subset C$ and $C \subset U$. In this case, $C$ is obtained from $U$ by removing
a single element.
\end{enumerate}
(It is easy to see that in any other case, either $|A \triangle C| \neq 2$ or
$|B \triangle C| \neq 2$.)

If $C,C'$ are both common neighbors of $A$ and $B$, such that $C$ satisfies (1) and $C'$
satisfies (2), then clearly, $|C \triangle C'|=4$. Hence, if two common neighbors of $A$
and $B$ are themselves neighbors, then either both satisfy (1) or both
satisfy (2). On the other hand, it is clear that any two common neighbors satisfying (1)
are themselves neighbors, and the same holds for (2). Thus, any pair $A,B$ of adjacent vertices
of $G$ is included in at most two max-cliques:
\begin{enumerate}
\item $U(A,B) = \{C \in V(G) | C \subset A \cup B\}$, and

\item $I(A,B) = \{C \in V(G) | C \supset A \cap B\}$.
\end{enumerate}
For any two elements $C,C' \in U(A,B)$, the union $C \cup C'$ is constant (and equal
to $U$). Likewise, for any two elements $C,C' \in I(A,B)$, the intersection $C \cap C'$ is
constant (and equal to $I$). This is the motivation behind the following definition.
\begin{definition}
A max-clique of the first type will be called a Union max-clique,
or a \emph{U-clique}, and a max-clique of the second type will be called an Intersection
max-clique or an \emph{I-clique}.
\end{definition}

\begin{remark}\label{Remark:Three-determine}
It is clear that given two vertices $A,B$ of a max-clique $C$, we cannot determine whether
$C$ is a U-clique or an I-clique. However, if we are given a third vertex $B' \in C$, we can
determine the type, according to which one of the equalities $A \cap B = A \cap B'$,
$A \cup B = A \cup B'$ holds. Moreover, once we determine that $C$ is, say, an I-clique,
we know that $C=I(A,B)$ as this is the unique I-clique that includes $A$ and $B$. Hence,
three vertices of a max-clique determine it uniquely.
\end{remark}

\begin{definition}
If $U(A,B)=\{A,B\}$, we say that $\{A,B\}$ is a \emph{degenerate U-clique}. Similarly, if
$I(A,B)=\{A,B\}$, we say that $\{A,B\}$ is a \emph{degenerate I-clique}.
\end{definition}

For a pair of adjacent vertices $A,B$, there are four possible situations:
\begin{enumerate}
\item $|U(A,B)| \geq 3$ and $|I(A,B)| \geq 3$. In this case, there are exactly two max-cliques
that contain $A$ and $B$.

\item $|U(A,B)| \geq 3$ and $|I(A,B)| = 2$. In this case, there is a unique max-clique
that contains $A$ and $B$, namely $U(A,B)$. (The I-clique that contains $A$ and $B$ is degenerate.)

\item The same as (2), with the roles of $U(A,B)$ and $I(A,B)$ interchanged.

\item $U(A,B) = I(A,B) = \{A,B\}$. In this case, the set $\{A,B\}$ itself is a max-clique (that
is, both a U-clique and an I-clique). As we shall see in the sequel, this situation
cannot occur in our geometric setting.
\end{enumerate}

\subsection{Types of Max-Cliques in $G(P)$}
\label{sec:sub:geom-meaning}

Now we turn to the geometric graph $G(P)$ and show additional properties of max-cliques
that follow from its geometric structure. In order to make our notation suggestive, we denote
now the two adjacent vertices of $G(P)$ by $T_1,T_2$, and the edges in their symmetric difference
by $e_1 \in T_1 \setminus T_2$ and $e_2 \in T_2 \setminus T_1$.

In $G(P)$, U-cliques and I-cliques have a geometric meaning.
\begin{enumerate}
\item \textbf{$U(T_1,T_2)$.} Consider the graph $\bar{T} = T_1 \cup T_2 = T_1 \cup \{e_2\}$.
Obviously, it is a connected graph with a unique cycle. This cycle contains $e_1$ and $e_2$, and is simple
if and only if $e_1$ and $e_2$ do not cross. (Note that $e_2$ cannot cross another edge of $T_1$, as
both these edges belong to the SST $T_2$.) As shown above, $U(A,B)$ consists of
all vertices of $G(P)$ that are obtained from $\bar{T}$ by removing a single edge.
Since the vertices of $G(P)$ are the edge sets of SSTs, we can say that removing an edge
(other than $e_1,e_2$) from $\bar{T}$ results in an element of $U(T_1,T_2)$ if and only if
the unique cycle of $\bar{T}$ is simple, and the removed edge belongs to that cycle. Note that
if $e_1,e_2$ cross, then removal of any edge other than $e_1,e_2$ from $\bar{T}$ results in a
non-simple graph, and thus, the only elements of $U(T_1,T_2)$ are $T_1$ and $T_2$.

\item \textbf{$I(T_1,T_2)$.} Consider the graph $\tilde{T} = T_1 \cap T_2 = T_1 \setminus \{e_1\}$.
Obviously, it is a simple forest with two connected components. As shown above, $I(A,B)$ consists of
all vertices of $G(P)$ that are obtained from $\tilde{T}$ by adding a single edge. Since the
vertices of $G(P)$ are the edge sets of SSTs, we can say that adding an edge to $\tilde{T}$
results in an element of $I(A,B)$ if and only if that edge makes the forest $\tilde{T}$ into
a simple spanning tree of $K(P)$.
\end{enumerate}


\subsection{Geometric Characterization of Degenerate Cliques}
\label{sec:sub:char-degenerate}

The geometric interpretation allows us to characterize the cases when U-cliques and I-cliques are
degenerate.
\begin{claim}\label{Claim:Deg-U-clique}
Let $T_1,T_2$ be SSTs such that $T_1 \setminus T_2 = \{e_1\}$ and $T_2 \setminus T_1 = \{e_2\}$.
The U-clique $U(T_1,T_2)$ is degenerate if and only if $e_1$ and $e_2$ cross.
\end{claim}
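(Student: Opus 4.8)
The plan is to read off everything from the geometric description of $U(T_1,T_2)$ recorded in Section~\ref{sec:sub:geom-meaning}. Write $\bar{T} = T_1 \cup T_2 = T_1 \cup \{e_2\}$, a connected graph with a single cycle $Z$, and recall that $Z$ contains both $e_1$ and $e_2$. Since $|\bar{T}| = |P|$, every element of $U(T_1,T_2)$ has the form $\bar{T} \setminus \{f\}$ for a single edge $f$; such a set is a spanning tree precisely when $f$ lies on $Z$, and it is a vertex of $G(P)$ precisely when, in addition, it is crossing-free. So the whole question reduces to determining which of the graphs $\bar{T} \setminus \{f\}$, $f \in Z$, are non-crossing.

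The one small observation I would isolate first is that $e_1$ and $e_2$ are the only pair of edges of $\bar{T}$ that can cross: if $f,g \in \bar{T}$ cross, then they cannot both lie in the SST $T_1$, so $\{f,g\}$ meets $\bar{T} \setminus T_1 = \{e_2\}$; symmetrically it meets $\bar{T} \setminus T_2 = \{e_1\}$; hence $\{f,g\} = \{e_1,e_2\}$.

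With this in hand, both directions are immediate. If $e_1$ and $e_2$ cross, then for every edge $f$ of $Z$ other than $e_1,e_2$ the graph $\bar{T}\setminus\{f\}$ still contains the crossing pair $e_1,e_2$, so it is not an SST; the only SSTs contained in $\bar{T}$ are thus $T_1 = \bar{T}\setminus\{e_2\}$ and $T_2 = \bar{T}\setminus\{e_1\}$, i.e.\ $U(T_1,T_2) = \{T_1,T_2\}$ is degenerate. Conversely, if $e_1$ and $e_2$ do not cross, the observation above says $\bar{T}$ itself is crossing-free, hence so is every $\bar{T}\setminus\{f\}$; since $Z$ is a cycle in the simple graph $K(P)$ it has length $\geq 3$ and therefore contains some edge $f \notin \{e_1,e_2\}$, and then $\bar{T}\setminus\{f\}$ is an SST different from $T_1$ and $T_2$, so $U(T_1,T_2)$ has at least three elements and is non-degenerate.

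I do not expect a genuine obstacle here; the argument is essentially bookkeeping layered on top of Section~\ref{sec:sub:geom-meaning}. The only two points that merit a line of justification are the ``only crossing pair'' observation above and the remark that $Z$, being a cycle in a simple graph, has length at least $3$, so that a third edge available for deletion always exists in the non-crossing case.
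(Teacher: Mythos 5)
Your proof is correct and follows essentially the same route as the paper's: both rely on the geometric description of $U(T_1,T_2)$ as the trees obtained by deleting one edge of the unique cycle of $\bar{T}$, the observation that $e_1,e_2$ are the only possibly crossing pair in $\bar{T}$, and the fact that the cycle has length at least $3$ in the non-crossing case. No issues.
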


\begin{proof}
By the geometric interpretation, if $e_1$ and $e_2$ cross then the only vertices
of $U(T_1,T_2)$ are $T_1$ and $T_2$, and thus, it is degenerate. If $e_1$ and $e_2$ do not cross,
then $\bar{T}$ is a {\it simple} connected graph on $n$ vertices with $n$ edges. (Note that $\bar{T}$
is simple since as $T_1,T_2$ are SSTs, the only edges in $\bar{T}$ that may cross each other
are $e_1,e_2$.) Thus, $\bar{T}$ has a cycle of order at least 3. Removal of any edge from this
cycle gives rise to a vertex in $U(T_1,T_2)$. Therefore, in this case $U(T_1,T_2)$
is non-degenerate.
\end{proof}

\begin{proposition}\label{Prop:Deg-I-clique}
Let $P$ be a finite set of points in the plane, no three on a line, $|P| \geq 4$. Let $T_1,T_2$ be SSTs of 
$K(P)$ such that $T_1 \setminus T_2 = \{e_1\}$ and $T_2 \setminus T_1 = \{e_2\}$.
The I-clique $I(T_1,T_2)$ is degenerate only in the following case:

The convex polygon $\mathrm{conv}(P)$ has three consecutive vertices $x,v,y$ (i.e., $[x,v]$ and
$[v,y]$ are edges of $\mathrm{conv}(P)$), such that:
\begin{enumerate}
\item The triangle $\mathrm{conv}(x,v,y)$ contains no other points of $P$.

\item $e_1 = [x,v]$ and $e_2 = [v,y]$.

\item $[x,y] \in T_1 \cap T_2$.
\end{enumerate}
\end{proposition}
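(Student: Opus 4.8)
The plan is to work with the geometric description of $I$-cliques from Section~\ref{sec:sub:geom-meaning}. Put $\tilde T=T_1\cap T_2$; it is a non-crossing forest with two components, spanning complementary vertex sets $A$ and $B$, and $e_1,e_2$ each join $A$ to $B$. By that description $I(T_1,T_2)=\{\,\tilde T\cup\{f\}\,\}$, where $f$ ranges over the segments $[a,b]$ with $a\in A$, $b\in B$ that cross neither the sub-tree $T_A$ of $\tilde T$ on $A$ nor the sub-tree $T_B$ on $B$; call such $f$ \emph{good}. Since $e_1$ and $e_2$ are good, $I(T_1,T_2)$ is degenerate precisely when no third good segment exists, so the task is to show that having only two good segments forces the stated configuration. (The converse --- that the configuration forces degeneracy --- is a short direct check, which I would include as well.)

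The first step is a convex-hull reduction. Every edge of $\mathrm{conv}(P)$ joining $A$ and $B$ lies on $\partial\mathrm{conv}(P)$, hence is good; and the number of $A$/$B$-transitions around $\partial\mathrm{conv}(P)$ is even, so there are $0$ or $2$ such hull edges. If there are $2$, the hull vertices form one $A$-arc and one $B$-arc and $\{e_1,e_2\}$ are exactly the two connecting hull edges. If there are $0$, one part --- say $B$ --- lies in the interior of $\mathrm{conv}(P)=\mathrm{conv}(A)$.

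The heart of the proof is to show that, away from the configuration of the statement, a third good segment can always be produced, contradicting degeneracy. In the easiest sub-case this is immediate: if $B$ has a hull vertex, $|B|\ge 2$, and $B$ lies entirely beyond the chord $[x,y]$ (with $x$ the $A$-vertex immediately preceding the $B$-arc on $\mathrm{conv}(P)$), then the two tangent segments from $x$ to $\mathrm{conv}(B)$ avoid $T_A$ (they leave $x$ on the side away from $\mathrm{conv}(A)$) and $T_B$ (they meet $\mathrm{conv}(B)$ only at a vertex), and the one other than $e_1$ is a third good segment. After disposing of the remaining sub-cases, the only surviving possibility is that one of the parts is a single vertex $v$ and that $v$ is a vertex of $\mathrm{conv}(P)$; its two hull-neighbours $x,y$ then lie in $A$, one has $\{e_1,e_2\}=\{[x,v],[v,y]\}$, and $\mathrm{conv}(x,v,y)$ contains no further point of $P$ because all of $A$ lies in $\mathrm{conv}(A)$, on the far side of the chord $[x,y]$. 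Finally, the good segments issuing from $v$ are the $[v,a]$, $a\in A$, with $(v,a)\cap T_A=\emptyset$: if $[x,y]\in T_A$, then for $a\ne x,y$ the segment $[v,a]$ meets $[x,y]\in T_A$, so only $[v,x]$ and $[v,y]$ are good and conditions (1)--(3) hold; whereas if $[x,y]\notin T_A$, then through the gap that $[x,y]$ leaves on $\partial\mathrm{conv}(A)$ the point $v$ sees a third vertex of $A$ along a segment missing $T_A$, again a third good segment.

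I expect the main obstacle to be "producing a third good segment" in full generality --- namely the interior-component case of the hull dichotomy and the case $[x,y]\notin T_A$ above. I would handle these by a pivot-and-rotate argument: starting from a known good segment, rotate one of its endpoints about the other until the sweeping ray first hits a new vertex of $P$; the delicate point is to ensure that throughout the rotation the moving segment stays disjoint from \emph{both} $T_A$ and $T_B$, which forces one to keep track of how two non-crossing plane trees can obstruct a rotating ray and, in the interior case, to choose the pivot with care (say on $\mathrm{conv}(B)$, or just outside an edge of $\mathrm{conv}(B)$).
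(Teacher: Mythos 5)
Your setup is sound: writing $\tilde T=T_1\cap T_2$ as a non-crossing forest with components on vertex classes $A,B$, defining a ``good'' segment as one joining $A$ to $B$ that crosses neither sub-tree, and observing that degeneracy means exactly two good segments exist. The easy direction and the final dichotomy ($[x,y]\in T_A$ versus $[x,y]\notin T_A$) are essentially right. But the decisive claim --- that having only two good segments forces one part to be a single vertex $v$ lying on $\mathrm{conv}(P)$ with empty triangle $xvy$ --- is not actually established. You prove it only in one special sub-case ($B$ meets the hull, $|B|\ge 2$, and $B$ lies entirely beyond the chord $[x,y]$), assert that the remaining sub-cases can be ``disposed of,'' and defer the genuinely hard configurations (a component interior to $\mathrm{conv}(P)$; general interleaving of $A$ and $B$ inside the hull; the production of a third good segment when $[x,y]\notin T_A$) to a pivot-and-rotate argument that you name but do not carry out. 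As you yourself note, the delicate point of that rotation --- keeping the sweeping segment disjoint from \emph{both} plane trees $T_A$ and $T_B$ throughout --- is exactly where the difficulty of the proposition lives, so the proposal has a genuine gap at its core.

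For contrast, the paper closes this gap with one short global argument that avoids the case analysis entirely: extend $\tilde T$ to a triangulation $\mathcal{T}$ of $\mathrm{conv}(P)$ and call an edge colorful if it joins $A$ to $B$. Every colorful edge of $\mathcal{T}$ is good (triangulation edges cannot cross $\tilde T$), and every triangle of $\mathcal{T}$ containing a colorful edge contains exactly two; hence a colorful edge that is a diagonal lies in two triangles and forces at least three colorful edges, contradicting degeneracy. So $\mathcal{T}$ has exactly two colorful edges, both on the boundary of $\mathrm{conv}(P)$ and necessarily sharing a vertex $v$, which gives conditions (1) and (2) at once; a single flip of the triangle $xyw$ adjacent to $[x,y]$ then produces a colorful edge $[v,w]$ crossing nothing of $\tilde T$ except possibly $[x,y]$, forcing $[x,y]\in\tilde T$. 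If you wish to salvage your route instead, you would need to state and prove the visibility lemmas you invoke (something in the spirit of the paper's Lemma~\ref{Lemma:Sees}), and that is likely to be longer than the triangulation argument.
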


\begin{proof}

Assume that~(1)--(3) hold, and let $T_0 \in I(T_1,T_2)$. $T_0$ is connected, and thus,
has an edge $e$ that emanates from $v$. Note that as $x,v,y$ are consecutive vertices
of $\mathrm{conv}(P)$ and the triangle $\mathrm{conv}(x,v,y)$ contains no other
points of $P$, any edge of $K(P)$ that emanates from $v$ (other than $e_1$ and $e_2$) must
cross $[x,y]$. Thus, either $e=e_1$, $e=e_2$, or $e$ crosses $[x,y]$. The latter is
impossible, since $[x,y] \in T_1 \cap T_2$, and as shown above,
$T_1 \cap T_2$ is included in any element of $I(T_1,T_2)$. If $e=e_1$, then
$T_1 = ((T_1 \cap T_2) \cup \{e_1\}) \subset T_0$, which implies $T_0=T_1$ (as both have
the same number of edges). Similarly, if $e=e_2$ then $T_0=T_2$. Therefore, the only
elements of $I(T_1,T_2)$ are $T_1$ and $T_2$, i.e., $I(T_1,T_2)$ is degenerate.

\begin{figure}[tb]
\begin{center}
\scalebox{0.8}{
\includegraphics{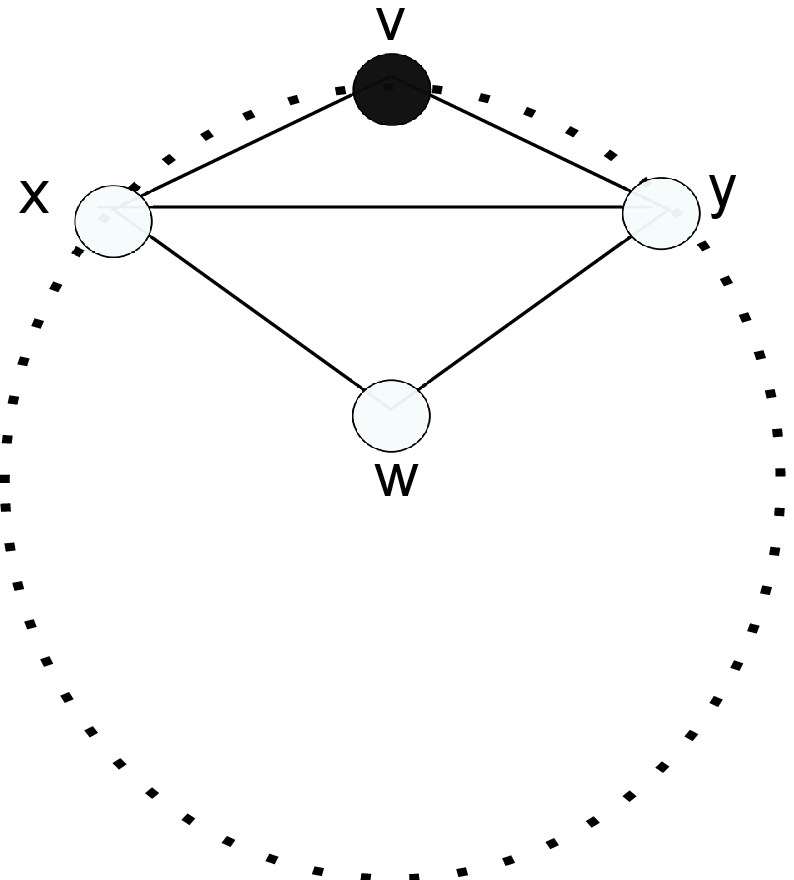}
} \caption{Illustration to the proof of Proposition~\ref{Prop:Deg-I-clique}.}\label{fig8}
\end{center}
\end{figure}

In the other direction, assume that $I(T_1,T_2)$ is degenerate. As mentioned
above, the graph $\tilde{T}=T_1 \cap T_2$ is a simple forest with two connected components.
Color the vertices of one component white and the vertices of the other component black,
and call an edge {\it colorful} if its endpoints are of different colors.

Since $\tilde{T}$ is planar, it can be extended to a triangulation $\mathcal{T}$ of
$\mathrm{conv}(P)$ with vertex set $P$. As $\mathcal{T}$ is connected, it contains a colorful edge $e$.
A triangle in $\mathcal{T}$ to which $e$ belongs clearly contains
another colorful edge $e'$. Addition of either $e$ or $e'$ to $\tilde{T}$ results in a simple tree,
and thus, gives rise to a vertex of $I(T_1,T_2)$. (Note that $e$ and $e'$ cannot cross edges of $\tilde{T}$ since
they belong to a triangulation that extends $\tilde{T}$.) Since $I(T_1,T_2)$ is degenerate, this implies that all
other edges of $\mathcal{T}$ are not colorful. We claim that this can happen only in the case described in the
statement of the proposition.

Consider the edges $e,e'$. If $e$ is not a boundary edge of $\mathrm{conv}(P)$ then it belongs to another triangle
in $\mathcal{T}$. The other triangle must contain an additional colorful edge, contradicting the assumption
that only $e$ and $e'$ are colorful. The same holds for $e'$, and thus, both $e$ and $e'$ are boundary edges of
$\mathrm{conv}(P)$. Denote their common vertex by $v$ and their other endpoints by $x,y$, respectively.

It is clear that Condition~(1) above holds for $x,v,y$, since $\mathcal{T}$ is a triangulation of $P$ and
$\mathrm{conv}(v,x,y)$ is one of its triangles. To see that Condition~(2) holds, note that $\tilde{T} \cup \{e\}$ and
$\tilde{T} \cup \{e'\}$ are the only vertices of $I(T_1,T_2)$, and thus, are equal to $T_1$ and $T_2$. As
$T_i=\tilde{T} \cup \{e_i\}$ for $i=1,2$, the edges $e,e'$ must coincide with $e_1$ and $e_2$.

Finally, since $|P| \geq 4$, $[x,y]$ is a diagonal of $\mathrm{conv}(P)$, and thus, in the triangulation
$\mathcal{T}$ it belongs to another triangle $\mathrm{conv}(x,y,w)$ (see Figure~\ref{fig8}).
This triangle is monochromatic (as
otherwise, there are at least four colorful edges in $\mathcal{T}$), hence $w,v$ are of different colors.
We apply a flip to the triangulation $\mathcal{T}$, replacing the triangles $\mathrm{conv}(x,y,v),
\mathrm{conv}(x,y,w)$ by $\mathrm{conv}(x,v,w),\mathrm{conv}(y,v,w)$. The resulting triangulation includes
an additional colorful edge $[v,w]$ that does not cross any edge of $\tilde{T}$, except possibly for $[x,y]$.
Since by assumption, there are only two colorful edges that do not cross edges of $\tilde{T}$, we
must have $[x,y] \in \tilde{T}$, which means that Condition~(3) holds.

\end{proof}

\begin{corollary}\label{Cor:At-least-one-non-degenerate}
Each edge of $G(P)$ is contained in at least one non-degenerate max-clique.
\end{corollary}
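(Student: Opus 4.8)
The plan is to argue that an edge $\{T_1,T_2\}$ of $G(P)$ is contained in at least one non-degenerate max-clique, by ruling out the only way this could fail, namely that \emph{both} $U(T_1,T_2)$ and $I(T_1,T_2)$ are degenerate (case (4) in the four-case list of Section~\ref{sec:sub:max-cliques-basics}). By Claim~\ref{Claim:Deg-U-clique}, $U(T_1,T_2)$ is degenerate if and only if the exchanged edges $e_1,e_2$ cross; by Proposition~\ref{Prop:Deg-I-clique}, $I(T_1,T_2)$ is degenerate only in the specific configuration where $e_1=[x,v]$ and $e_2=[v,y]$ with $x,v,y$ three consecutive vertices of $\mathrm{conv}(P)$ bounding an empty triangle. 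So it suffices to observe that these two situations are mutually exclusive: in the degenerate-$I$ configuration $e_1$ and $e_2$ share the endpoint $v$, hence are \emph{not} vertex-disjoint and in particular do not cross, so $U(T_1,T_2)$ is non-degenerate. Hence at least one of the two max-cliques through $\{T_1,T_2\}$ has size $\geq 3$.

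Concretely, I would first recall that every edge $\{T_1,T_2\}$ of $G(P)$ lies in the (at most two) max-cliques $U(T_1,T_2)$ and $I(T_1,T_2)$, and in no others. Then I would assume for contradiction that neither is non-degenerate, i.e.\ $U(T_1,T_2)=\{T_1,T_2\}$ and $I(T_1,T_2)=\{T_1,T_2\}$. From the degeneracy of $U(T_1,T_2)$ and Claim~\ref{Claim:Deg-U-clique}, the edges $e_1,e_2$ cross, so in particular they are vertex-disjoint. From the degeneracy of $I(T_1,T_2)$ and Proposition~\ref{Prop:Deg-I-clique}, $e_1$ and $e_2$ both emanate from the common vertex $v$, contradicting vertex-disjointness. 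Therefore at least one of $U(T_1,T_2)$, $I(T_1,T_2)$ is non-degenerate, and this max-clique contains the edge $\{T_1,T_2\}$.

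There is essentially no obstacle here: the corollary is an immediate consequence of the preceding two results, and the only thing to check is the trivial geometric incompatibility between "$e_1,e_2$ cross" and "$e_1,e_2$ meet at $v$". The one minor point worth stating explicitly is that Proposition~\ref{Prop:Deg-I-clique} requires $|P|\geq 4$; for $|P|\leq 3$ the claim can be checked directly (or is vacuous, since one should check $G(P)$ has edges at all), but in the regime of interest the argument above applies verbatim.
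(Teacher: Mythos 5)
Your proposal is correct and follows essentially the same route as the paper: both arguments rest on Claim~\ref{Claim:Deg-U-clique} (degenerate $U$-clique $\Leftrightarrow$ $e_1,e_2$ cross) and Proposition~\ref{Prop:Deg-I-clique} (degenerate $I$-clique forces $e_1,e_2$ to share the vertex $v$), and on the observation that these two conditions are incompatible since crossing edges are vertex-disjoint. The paper phrases this as a case split on whether $e_1,e_2$ cross rather than as a proof by contradiction, but the content is identical.
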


\begin{proof}
Let $[T_1,T_2] \in G(P)$\footnote{For sake of clarity, we use here and in the sequel the notation $[T,T']$ for
edges of $G(P)$, like is commonly used for geometric graphs, although $G(P)$ is treated as an abstract graph.} and denote $T_1 \setminus T_2 = \{e_1\}$ and $T_2 \setminus T_1 = \{e_2\}$.
If $e_1,e_2$ do not cross then $U(T_1,T_2)$ is non-degenerate by
Claim~\ref{Claim:Deg-U-clique}. If $e_1,e_2$ cross then $I(T_1,T_2)$ is
non-degenerate, since by the proof of Proposition~\ref{Prop:Deg-I-clique}, if $I(T_1,T_2)$ is degenerate, then
the only edges that can be added to $T_1 \cap T_2$ to form a simple tree share a vertex, which is not the case for $e_1,e_2$ that cross in an interior point.
\end{proof}

\subsection{Identification of the type of a Degenerate Clique in $G(P)$}
\label{sec:sub:identify-degenerate}

\begin{lemma}\label{Lemma:I-clique-not-leaf}
Let $T$ be a SST of $G(P)$, and let $D$ be an I-clique that contains $T$. Denote the
common intersection of pairs of elements of $D$ by $\tilde{T}$, and let
$\{e\}=T \setminus \tilde{T}$. If $e$ is not a leaf edge of $T$, then $|V(D)| \geq 4$.
\end{lemma}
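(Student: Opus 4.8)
The statement is about an I-clique $D$ containing an SST $T$, where the common intersection of pairs of elements of $D$ is a forest $\tilde T$ with two components, and $\{e\} = T \setminus \tilde T$ is the unique edge of $T$ joining the two components. The claim is that if $e$ is not a leaf edge of $T$ (i.e., both endpoints of $e$ have degree $\geq 2$ in $T$), then $D$ has at least four vertices. Since $D$ is an I-clique, its vertices are exactly the SSTs of the form $\tilde T \cup \{f\}$ for colorful edges $f$ (in the black/white coloring of the two components of $\tilde T$) that do not cross $\tilde T$. So I need to produce at least four such edges $f$.

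First I would set up notation: write $e = [a,b]$ with $a$ white and $b$ black, and note that since $e$ is not a leaf edge, $a$ has another neighbor $a'$ in $T$ and $b$ has another neighbor $b'$ in $T$; all of $a',b'$ lie in $\tilde T$, with $a'$ white and $b'$ black. The idea is to rotate the edge $e$ around one of its endpoints to find alternative colorful reconnecting edges. Concretely, I would consider the set of colorful edges emanating from $a$ that do not cross $\tilde T$: each such edge, added to $\tilde T$, gives a vertex of $D$. I would argue that there is more than one such edge (besides $e$) by a convexity / visibility argument: from $a$, the black component of $\tilde T$ subtends some region, and $a$ can ``see'' at least two black vertices without crossing $\tilde T$ — one being $b$. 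More carefully, I would use a triangulation $\mathcal{T}$ of $\mathrm{conv}(P)$ extending $\tilde T$ (as in the proof of Proposition~\ref{Prop:Deg-I-clique}): in $\mathcal{T}$, the edge $e=[a,b]$ lies in two triangles (or one, if $e$ is on the boundary), and analyzing the colorful edges forced in those triangles — together with flips, exactly as in the proof of Proposition~\ref{Prop:Deg-I-clique} — produces additional colorful non-crossing edges. The point where the leaf condition enters is that it guarantees enough ``room'' around both endpoints that the degenerate configuration (three consecutive hull vertices $x,v,y$ with $v$ of degree... etc.) cannot arise, and in fact one gets not just non-degeneracy ($|V(D)| \geq 3$) but $|V(D)| \geq 4$.

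The cleanest route, I expect, is: (i) by Proposition~\ref{Prop:Deg-I-clique}, $D$ is non-degenerate, so $|V(D)| \geq 3$; hence there are at least two colorful non-crossing edges $e, e'$ with $e' \neq e_1,e_2$ irrelevant here — rather $e$ and one more, say $e'$, and $\tilde T \cup \{e\}, \tilde T \cup \{e'\}$ are distinct vertices; (ii) suppose for contradiction $|V(D)| = 3$, so $e$ and $e'$ are the \emph{only} colorful edges not crossing $\tilde T$; (iii) run the triangulation-and-flip argument from Proposition~\ref{Prop:Deg-I-clique}: extend $\tilde T$ to a triangulation $\mathcal{T}$; since there are exactly two colorful non-crossing edges $e,e'$, and every triangle containing one colorful edge contains a second, $e$ and $e'$ must share a triangle, and each must be a boundary edge of $\mathrm{conv}(P)$ (else the other triangle forces a third colorful edge); so $e,e'$ are two consecutive hull edges meeting at a vertex $v$ with endpoints $x,v,y$, the triangle $\mathrm{conv}(x,v,y)$ empty of other points of $P$, and $[x,y] \in \tilde T$. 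But $e = [a,b]$ and $\{e,e'\}$ are colorful edges meeting at $v$, so $v \in \{a,b\}$; say $v = a$. Then the only neighbors of $a$ in $\tilde T \cup \{e\} = T$ are among $x,y$ — but one of $x,y$ is the black endpoint $b$ of $e$, and the edge from $a$ to the other of $x,y$ would have to be in $T$; however that other vertex has the same color as... I would check the coloring carefully to see that $a$ ends up with degree $1$ in $T$, i.e., $e$ is a leaf edge, contradicting the hypothesis. This contradiction gives $|V(D)| \geq 4$.

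The main obstacle I anticipate is step (iii)'s coloring bookkeeping: one must verify precisely that in the degenerate-looking configuration forced by $|V(D)|=3$, the endpoint $v$ of the two colorful hull edges has all its $T$-neighbors inside the empty triangle $\mathrm{conv}(x,v,y)$, which — given that $x,y$ are its only possible neighbors there and exactly one of them is the opposite-colored endpoint of $e$ while the edge to the other may be forced out by the coloring (same-colored, hence not in the spanning tree's cut, or crossing $[x,y]\in\tilde T$) — collapses $\deg_T(v)$ to $1$. An alternative, possibly slicker, approach avoiding a contradiction: directly exhibit four colorful non-crossing edges by taking, in the triangulation $\mathcal{T}$ extending $\tilde T$, the edge $e=[a,b]$, which (not being a leaf edge) is an interior edge of $T$ and can be shown to be an interior edge or to have enough incident triangles that rotating around $a$ yields one extra colorful edge and rotating around $b$ yields another, for a total of at least three colorful edges besides $e$ — but controlling non-crossing with $\tilde T$ for all of these simultaneously is exactly the delicate part, so I would lean on the Proposition~\ref{Prop:Deg-I-clique} machinery rather than redo it from scratch.
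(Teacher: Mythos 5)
There is a genuine gap in your main route, and it is a counting error at the heart of the contradiction argument. Since $D$ is an I-clique, its vertices are in bijection with the colorful edges $f$ (joining the two components of $\tilde T$) such that $\tilde T\cup\{f\}$ is an SST; distinct edges give distinct trees. Hence $|V(D)|=3$ means there are exactly \emph{three} such edges, not two. Your step (iii) assumes ``there are exactly two colorful non-crossing edges $e,e'$'' and then replays the machinery of Proposition~\ref{Prop:Deg-I-clique}, which characterizes precisely the two-edge situation, i.e.\ $|V(D)|=2$. That case is already excluded by your step (i) (in the degenerate configuration of Proposition~\ref{Prop:Deg-I-clique} the vertex $v$ is isolated in $\tilde T$, so $e$ is a leaf edge). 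What remains to be excluded is $|V(D)|=3$, and for that the deduction ``else the other triangle forces a third colorful edge'' gives you nothing — a third colorful edge is perfectly consistent with $|V(D)|=3$. You would need to force a \emph{fourth} colorful edge not crossing $\tilde T$, and your proposal never does this.

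Producing that fourth edge is exactly where the real work lies, and it is what the paper's proof does: extend $\tilde T$ to a triangulation $\mathcal{T}$, observe that the non-leaf hypothesis forces both components of $\tilde T$ to have at least two vertices, hence \emph{every} vertex of $P$ is incident to a monochromatic edge of $\mathcal{T}$, and then run a case analysis on the two triangles of $\mathcal{T}$ containing a colorful diagonal $[a,b]$ (colors of the two opposite vertices $c,d$; whether $[a,c]$, $[b,d]$ are hull edges; a quadrilateral-diagonal argument in the last subcase) to exhibit four colorful edges each yielding a vertex of $D$. You gesture at this in your ``alternative, possibly slicker approach'' but explicitly defer the details and fall back on the Proposition~\ref{Prop:Deg-I-clique} machinery, which, as explained, cannot close the $|V(D)|=3$ case. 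As written, the proposal proves only $|V(D)|\geq 3$.
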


\begin{proof}
We begin the proof with the argument used in the proof of Proposition~\ref{Prop:Deg-I-clique}.
Namely, we consider the graph $\tilde{T}$, which is a simple forest with two components.
We color its components black and white, and extend it to a triangulation $\mathcal{T}$
of $P$. The triangulation contains a colorful edge $[a,b]$, and consequently, another
colorful edge $[a,b']$ in the same triangle. Assume w.l.o.g. that $a$ is
black, $b$ and $b'$ are white.

Now we would like to use the assumption that $e$ is not a leaf edge of $T$. This assumption
implies that each connected component of $\tilde{T}$ has at least two vertices.
Consequently, any vertex $v \in P$ is an endpoint of at least one monochromatic edge of
$\mathcal{T}$ (as otherwise, $v$ would be isolated in $\tilde{T}$).

If both $[a,b]$ and $[a,b']$ are boundary edges of $\mathrm{conv}(P)$, then (since
$\triangle a,b,b'$ is a triangle in $\mathcal{T}$) these are the only edges of $\mathcal{T}$ that
emanate from $a$. This is impossible, as they are both colorful. Hence, we can assume
w.l.o.g. that $[a,b]$ is a diagonal of $\mathrm{conv}(P)$, and thus, belongs to
two triangles, $\triangle abc$ and $\triangle abd$. (Note that either $c$ or $d$ is equal
to the vertex $b'$ mentioned above.) We consider several cases, according to the colors of
$c$ and $d$:


\begin{enumerate}
\item \textbf{Case 1: $c$ and $d$ are white.} (This case is illustrated in Figure~\ref{fig2p2}.)
Consider the neighbors of $a$ in $\mathcal{T}$.
Since $\mathcal{T}$ is a triangulation, all these vertices lie on a path in $\mathcal{T}$.
As stated before, since $a$ is black, at least one of its neighbors must be black. On the
other hand, some of its neighbors (including $b,c,d$) are white. Hence, at least one of the
edges in the path connecting the neighbors of $a$ is colorful (see Figure~\ref{fig2p2}).
In addition, the edges $[a,b],[a,c],[a,d]$ are colorful. Thus, $\mathcal{T}$ contains at least four colorful edges,
and each of them gives rise to a vertex of $D$. Therefore, $|V(D)| \geq 4$.

\begin{figure}[tb]
\begin{center}
\scalebox{0.8}{
\includegraphics{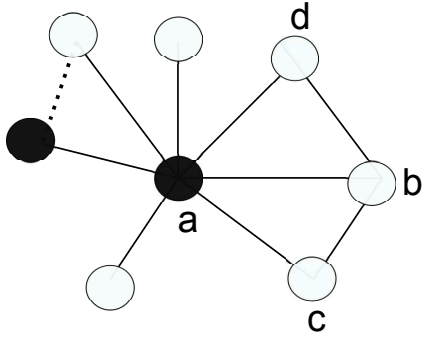}
} \caption{Illustration to Case~1 of Lemma~\ref{Lemma:I-clique-not-leaf}.} \label{fig2p2}
\end{center}
\end{figure}

\item \textbf{Case 2: $c$ and $d$ are black.}
Since the black vertices $a,c,d$ are all neighbors
of the white vertex $b$, the argument of Case 1 applies, with the roles of $a$ with $b$,
black and white, interchanged.

\item \textbf{Case 3: $c$ and $d$ have different colors.} Assume w.l.o.g. that $c$ is white
and $d$ is black. In this case, the edges $[a,b],[a,c],$ and $[b,d]$ are colorful. We further
divide this case into three subcases, according to whether $[a,c]$ and
$[b,d]$ are diagonals of $\mathrm{conv}(P)$ or not, and show that in each case,
$\mathcal{T}$ contains at least one additional colorful edge.
\begin{enumerate}
\item \textbf{Case 3a: $[b,d]$ is a diagonal of $\mathrm{conv}(P)$.} In this case, $[b,d]$
belongs to an additional triangle of $\mathcal{T}$ (i.e., other than $\triangle abd$). If this
triangle is $\triangle bdc$, then the edge $[c,d]$ is colorful, implying $|V(D)| \geq 4$ (see
left part of Figure~\ref{fig3p2}). If
this triangle is $\triangle bdk$ for some $k \neq c$, then, as $b$ and $d$ have different colors,
one of the edges $[b,k]$ and $[d,k]$ is colorful, again implying $|V(D)| \geq 4$ (see right part of
Figure~\ref{fig3p2}).

\begin{figure}[tb]
\begin{center}
\scalebox{0.8}{
\includegraphics{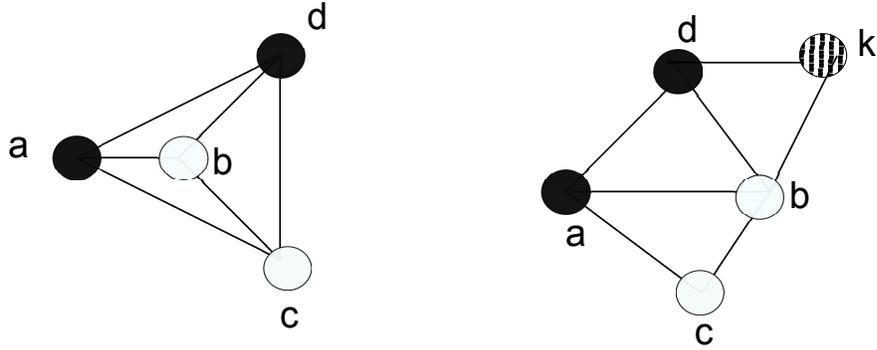}
} \caption{Illustrations to Case~3a of Lemma~\ref{Lemma:I-clique-not-leaf}} \label{fig3p2}
\end{center}
\end{figure}


\item \textbf{Case 3b: $[a,c]$ is a diagonal of $\mathrm{conv}(P)$.} The argument
of Case 3a applies, with the roles of $a$ and $b$, $c$ and $d$, black and white,
interchanged.

\item \textbf{Case 3c: Both $[a,c]$ and $[b,d]$ are boundary edges of $\mathrm{conv}(P)$.} In this
case, $\square abcd$ is a convex quadrilateral, and thus, its diagonal $[c,d]$ lies inside it
(see Figure~\ref{fig5p2}).
As both $\triangle abc$ and $\triangle abd$ are triangles in $\mathcal{T}$, they do not contain
points of $P$, hence $[c,d]$ does not cross any edge of $\tilde{T}$. (Note that $[a,b]$
is not an edge of $\tilde{T}$, since it is colorful.) Since $[c,d]$ is colorful, the graph
$\tilde{T} \cup \{[c,d]\}$ is an SST, hence belongs to $D$. Therefore, $|V(D)| \geq 4$, which
completes the proof of the lemma.

\begin{figure}[tb]
\begin{center}
\scalebox{0.8}{
\includegraphics{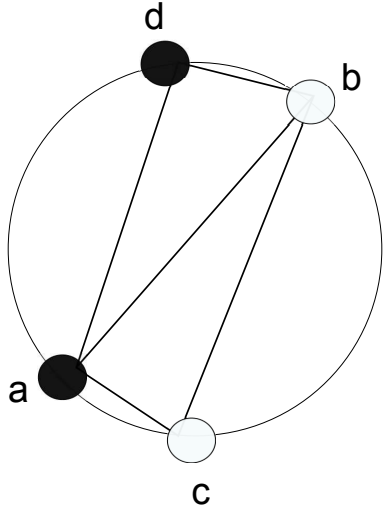}
} \caption{Illustration to Case 3c of Lemma~\ref{Lemma:I-clique-not-leaf}.} \label{fig5p2}
\end{center}
\end{figure}

\end{enumerate}
\end{enumerate}
\end{proof}

Using the lemma, we can identify whether a given degenerate clique is a U-clique
or an I-clique.
\begin{proposition}\label{Prop:Identify-Degenerate}
Let $[S,T]$ be an edge in $G(P)$ that constitutes a degenerate clique. (This actually
means that there is only one max-clique that includes $\{S,T\}$.) If $[S,T]$
is included in a max-clique of 3 vertices, then the degenerate clique
$[S,T]$ is an I-clique. If $[S,T]$ is included in a max-clique of at least 4 vertices,
then the degenerate clique $[S,T]$ is a U-clique.
\end{proposition}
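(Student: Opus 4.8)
The plan is to combine the dichotomy of Section~\ref{sec:sub:max-cliques-basics} with the geometric characterizations of degeneracy proved above. First I would fix notation: write $S\setminus T=\{e_1\}$ and $T\setminus S=\{e_2\}$, so that $[S,T]$ lies in the two (possibly degenerate) max-cliques $U(S,T)$ and $I(S,T)$. By Corollary~\ref{Cor:At-least-one-non-degenerate} at least one of these is non-degenerate, and since by hypothesis $\{S,T\}$ itself is a max-clique, exactly one of $U(S,T),I(S,T)$ equals $\{S,T\}$ while the other — call it $M$ — is the unique max-clique containing $\{S,T\}$. Thus $\{S,T\}$ is either a degenerate $U$-clique (and then $M=I(S,T)$) or a degenerate $I$-clique (and then $M=U(S,T)$), and the two possibilities are mutually exclusive. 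The proposition will follow once I show that in the $U$-degenerate case $|M|\ge 4$, whereas in the $I$-degenerate case $|M|=3$.

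Consider the case where $\{S,T\}$ is a degenerate $I$-clique, so $M=U(S,T)$. Here I would apply Proposition~\ref{Prop:Deg-I-clique}: there are three consecutive vertices $x,v,y$ of $\mathrm{conv}(P)$ with $\mathrm{conv}(x,v,y)$ free of other points of $P$, $e_1=[x,v]$, $e_2=[v,y]$, and $[x,y]\in S\cap T$. Since $e_1$ and $e_2$ share the vertex $v$ they do not cross, so $\bar T=S\cup T$ is simple — a connected non-crossing graph on $|P|$ vertices with $|P|$ edges, hence with exactly one cycle, and that cycle contains $e_1$ and $e_2$. Because $x,v,y$ are consecutive on the hull and $\mathrm{conv}(x,v,y)$ is empty, every edge out of $v$ other than $[v,x]$ and $[v,y]$ crosses $[x,y]$; as $\bar T$ is non-crossing and $[x,y]\in\bar T$, this forces $\deg_{\bar T}(v)=2$. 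So the unique cycle enters and leaves $v$ along $e_1,e_2$, i.e.\ it has the form $v,x,\dots,y,v$; since $[x,y]\in\bar T$, uniqueness of the cycle forces its $x$–$y$ portion to be the single edge $[x,y]$, so the cycle is the triangle $\mathrm{conv}(x,v,y)$. Deleting each of its three edges from $\bar T$ yields an SST — deleting $e_1$ gives $T$, deleting $e_2$ gives $S$, deleting $[x,y]$ gives a third SST — hence $|M|=|U(S,T)|=3$.

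Consider now the case where $\{S,T\}$ is a degenerate $U$-clique. By Claim~\ref{Claim:Deg-U-clique} the edges $e_1,e_2$ cross, and $M=I(S,T)$, which is non-degenerate. I would apply Lemma~\ref{Lemma:I-clique-not-leaf} with the SST $S$, the $I$-clique $D=I(S,T)$, common intersection $\tilde T=S\cap T$, and $\{e\}=S\setminus\tilde T=\{e_1\}$. To do so I must check that $e_1$ is not a leaf edge of $S$. If it were, say the endpoint $a$ of $e_1$ had degree $1$ in $S$, then since $e_1\notin\tilde T$ the vertex $a$ would be isolated in the forest $\tilde T$, so its component of $\tilde T$ is $\{a\}$; but $e_2$ joins the two components of $\tilde T$ (that is what makes $T=\tilde T\cup\{e_2\}$ a spanning tree), so one endpoint of $e_2$ is $a$ — contradicting the fact that $e_1$ and $e_2$ cross at an interior point and therefore share no vertex. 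The same argument excludes the other endpoint of $e_1$, so $e_1$ is not a leaf edge of $S$, and Lemma~\ref{Lemma:I-clique-not-leaf} yields $|M|=|I(S,T)|\ge 4$.

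Combining the two cases: if $[S,T]$ is contained in a max-clique with exactly $3$ vertices, that clique is $M$, so we are not in the $U$-degenerate case (which forces $|M|\ge 4$), hence $\{S,T\}$ is a degenerate $I$-clique; if $[S,T]$ is contained in a max-clique with at least $4$ vertices, we are not in the $I$-degenerate case (which forces $|M|=3$), hence $\{S,T\}$ is a degenerate $U$-clique. I expect the main obstacle to be the $U$-degenerate case: establishing $|M|\ge 4$ genuinely needs the full strength of Lemma~\ref{Lemma:I-clique-not-leaf}, and the one new ingredient is the observation that two crossing edges cannot both be "leaf edges" of the relevant trees, which is exactly what lets the crossing hypothesis feed into that lemma.
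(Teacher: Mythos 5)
Your proof is correct and follows essentially the same route as the paper's: in the $I$-degenerate case you invoke Proposition~\ref{Prop:Deg-I-clique} to show the unique cycle of $S\cup T$ is the triangle $\triangle xvy$ (so the U-clique has exactly 3 vertices), and in the $U$-degenerate case you combine Claim~\ref{Claim:Deg-U-clique} with Lemma~\ref{Lemma:I-clique-not-leaf} to get size at least 4 for the I-clique. You also make explicit two details the paper leaves implicit --- that the unique cycle really is the triangle, and that the crossing of $e_1,e_2$ forces $e_1$ not to be a leaf edge of $S$ --- both verified correctly.
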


\begin{proof}
By Corollary~\ref{Cor:At-least-one-non-degenerate}, each edge is included in at least
one non-degenerate max-clique. Hence, $[S,T]$ is included in a max-clique of size at
least 3.

If the degenerate clique $[S,T]$ is an I-clique, then, by Proposition~\ref{Prop:Deg-I-clique},
$S \cap T$ is a forest with two connected components. One of them is
an isolated vertex $v$ that lies on the boundary of $\mathrm{conv}(P)$, and the two neighbors
of $v$ on the boundary of $\mathrm{conv}(P)$, $x,y$, are adjacent in $S \cap T$. In such a case, the unique
cycle in the graph $S \cup T$ is the triangle $\triangle (x,v,y)$. By the geometric
characterization of Section~\ref{sec:sub:geom-meaning}, this implies that the size of the
U-clique that includes $[S,T]$ is 3.

If the degenerate clique $[S,T]$ is a U-clique, then, by Claim~\ref{Claim:Deg-U-clique},
the unique cycle of $S \cup T$ includes the edges $s \in S \setminus T$ and $t \in T \setminus S$,
and these edges cross. This implies that $s$ is not a leaf edge of $S$, and thus, by
Lemma~\ref{Lemma:I-clique-not-leaf}, the size of the I-clique that includes $[S,T]$ is at
least 4.
\end{proof}

\subsection{Identification of the type of a Non-Degenerate Max-clique in $G(P)$}
\label{sec:sub:identify-non-degenerate}

Our next goal is to identify whether a given non-degenerate max-clique is a U-clique or an
I-clique. This identification is somewhat more complex, and requires some preparations.
\begin{definition}
For any SST $T \in G(P)$, define a graph $D_T$ as follows: $V(T)$ is the set of max-cliques
that contain $T$ (including degenerate cliques). Two max-cliques are adjacent in $D_T$
if and only if their intersection is a single edge (that obviously has $T$ as one of its endpoints).
\end{definition}
Note that by Remark~\ref{Remark:Three-determine}, two non-identical max-cliques may intersect
in at most two vertices (since three mutually adjacent vertices determine a max-clique uniquely).
As all vertices of $D_T$ include $T$, it follows that any two non-adjacent vertices of $D_T$ intersect
in $T$ only.
\begin{theorem}\label{Thm:D_T}
Assume that $|P| \geq 5$. For any $T \in G(P)$, the graph $D_T$ is connected.
\end{theorem}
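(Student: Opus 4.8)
The plan is to view the vertices of $D_T$ as a combination of two pieces of data about how each max-clique sits relative to $T$, and to connect them one step at a time. Fix $T$, and for each max-clique $D$ containing $T$ let $e(D)$ be the single edge in which $D$ meets its neighbors in $D_T$ as described; concretely, if $D$ is a U-clique then $D = U(T, T')$ for some neighbor $T'$ of $T$ in $G(P)$, and the edge $\{t\} = T' \setminus T$ lies in the unique cycle of $T \cup T'$, while if $D$ is an I-clique then $D = I(T,T')$ and $\tilde T = T \cap T'$ is obtained from $T$ by deleting one edge $e$. So a U-clique through $T$ is naturally indexed by an edge $t \notin T$ such that $T \cup \{t\}$ has a simple cycle (equivalently $t$ crosses no edge of $T$) — call this the ``add'' data — while an I-clique through $T$ is indexed by a non-leaf edge of $T$ together with the extra edge $e_1 = T \setminus \tilde T$; but a degenerate clique $\{T,T'\}$ can be reached both by adding $t = T' \setminus T$ and by deleting $e = T \setminus T'$. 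The key structural point is that $D$ and $D'$ are adjacent in $D_T$ precisely when they share a common neighbor $T'$ of $T$ in $G(P)$, i.e. when $U(T,T')$ and $I(T,T')$ are both nondegenerate (by the four-case analysis in Section~\ref{sec:sub:max-cliques-basics}) — so each neighbor $T'$ of $T$ that has a nondegenerate U-clique \emph{and} a nondegenerate I-clique produces an edge of $D_T$ joining the corresponding U-clique and I-clique.

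First I would show that the ``U-part'' of $D_T$ — the subgraph induced on U-cliques through $T$ — together with the degenerate cliques, is connected. For this, take two U-cliques $U(T,T_1)$ and $U(T,T_2)$; each corresponds to an added edge $t_1, t_2$ that crosses no edge of $T$, and the cycle of $T \cup \{t_i\}$ contains $t_i$ and a path in $T$ between the endpoints of $t_i$. I would connect them by a sequence of ``edge flips'' inside $T$: since $G(P)$ itself is connected (Claim~\ref{Claim:G(P)-connected}), there is a path in $G(P)$ from $T$ back to $T$ passing through the relevant SSTs, but more directly, I would argue that from the SST $T$ one can slide from one non-crossing chord to another through intermediate non-crossing chords, using that any two triangulations of $\mathrm{conv}(P)$ extending $T$'s forest are connected by flips, and each flip corresponds to moving between two U-cliques that share a common neighbor $T'$ (hence are $D_T$-adjacent) or passing through an I-clique. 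The analogous statement for the ``I-part'' follows the same way, using the triangulation/flip argument already employed in the proofs of Proposition~\ref{Prop:Deg-I-clique} and Lemma~\ref{Lemma:I-clique-not-leaf}: deleting a non-leaf edge $e$ of $T$ splits $T$ into two components, colour them, and the colourful edges of an extending triangulation index exactly the SSTs in $I(T, T \setminus \{e\} \cup \{e_1\})$; flipping the triangulation moves between colourful edges, and each such move corresponds to a $D_T$-edge.

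The bridge between the two parts is the crucial step: I must exhibit at least one neighbor $T'$ of $T$ in $G(P)$ for which both $U(T,T')$ and $I(T,T')$ are nondegenerate, and more, enough such $T'$ to tie the U-part and I-part together. Here is where $|P| \ge 5$ enters. Given any U-clique $U(T,T')$ with $|U(T,T')| \ge 3$, I claim its ``bridge edge'' $t = T' \setminus T$ can be chosen so that, along the cycle of $T \cup \{t\}$, removing a suitable edge $e$ of $T$ on that cycle yields a neighbor $T''$ of $T$ whose I-clique is nondegenerate — by Proposition~\ref{Prop:Deg-I-clique}, $I(T,T'')$ is degenerate only in the very restrictive ``ear'' configuration (three consecutive hull vertices $x,v,y$ with $\triangle xvy$ empty, $e_1 = [x,v]$, $e_2=[v,y]$, $[x,y] \in T$), and with $|P| \ge 5$ a counting/case argument shows we can always avoid landing exclusively in such configurations, since the cycle has length $\ge 3$ and offers several choices of edge to remove. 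Conversely every I-clique is similarly linked to a U-clique. Chaining these observations: every U-clique is $D_T$-connected to a degenerate or I-clique, every I-clique likewise, and within each type the flip arguments give connectivity, so $D_T$ is connected.

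\textbf{Main obstacle.} The hard part will be the bridging step — proving that one cannot have a max-clique through $T$ whose every ``flip neighbor'' in $G(P)$ yields a degenerate clique of the opposite type, thereby isolating it in $D_T$. This requires carefully ruling out, using $|P|\ge5$ and the exact description of degenerate I-cliques from Proposition~\ref{Prop:Deg-I-clique} (and the complementary fact from Claim~\ref{Claim:Deg-U-clique} that degenerate U-cliques come from crossing pairs), that the local picture around $T$ degenerates everywhere at once; the flip/triangulation bookkeeping that controls how many colourful or non-crossing chords are available is the technical heart, and is exactly the place where a naive argument would break for $|P| = 4$.
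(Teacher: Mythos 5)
Your structural picture of $D_T$ is essentially right --- U-cliques through $T$ are indexed by an added chord, I-cliques by a deleted edge of $T$ (not necessarily a non-leaf edge, by the way), and an edge of $D_T$ arises exactly from a neighbor $T'$ of $T$ for which both $U(T,T')$ and $I(T,T')$ are non-degenerate --- but the connectivity argument itself has a genuine gap, in two places. First, the ``U-part'' step as stated is incoherent: two distinct U-cliques through $T$ can never be adjacent in $D_T$ (if they shared the edge $[T,T']$ they would both equal $U(T,T')$, the unique U-clique containing that edge), so $D_T$ is bipartite between U-cliques and I-cliques and every path between two U-cliques must alternate through I-cliques. Your flip mechanism (``each flip corresponds to moving between two U-cliques that share a common neighbor $T'$, hence are $D_T$-adjacent'') therefore proves nothing; the parenthetical ``or passing through an I-clique'' is where all the work lives, and it is not done. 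Second, and more seriously, your ``I-part'' argument only moves between different colorful edges for a \emph{fixed} deleted edge $e$ of $T$, i.e.\ between different elements of one and the same I-clique; it never changes which edge of $T$ is deleted, so it does not connect two distinct I-cliques at all. Connecting the I-cliques of two different deleted edges $e,e'$ is the technical heart of Theorem~\ref{Thm:D_T}, and your proposal contains no mechanism for it.

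The paper's proof supplies exactly that mechanism. It fixes a triangulation $\mathcal{T}$ extending $T$, first replaces $S,S'$ by representatives inside $\mathcal{T}$ (staying in the same I-cliques), then walks along the unique path in $T$ from $e$ to $e'$, reducing to the case where the two deleted edges share a vertex $y$; for that case it deletes \emph{all} edges of $T$ at $y$, extends the remaining forest to an SST of $K(P\setminus\{y\})$ inside $\mathcal{T}$ using 2-connectedness, and reads off from the resulting path between $x$ and $x'$ an explicit sequence $S_1,\dots,S_{2l+2}$ of neighbors of $T$ whose consecutive pairs lie alternately in I-cliques and U-cliques with $T$. Note also that this route never needs the ``bridging'' dichotomy you flag as the main obstacle, nor Proposition~\ref{Prop:Deg-I-clique}: it produces, for any two edges $[T,S]$ and $[T,S']$ of $G(P)$, a chain of neighbors of $T$ in which consecutive ones share \emph{some} max-clique with $T$, and that is all that connectivity of $D_T$ requires. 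If you want to rescue your plan, the piece you must actually supply is precisely this passage between the I-cliques of two adjacent deleted edges.
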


Before we present the proof of the theorem, we show how it can be used (along with several
of the previous lemmas) to identify the types of max-cliques in $G(P)$.

\medskip

By the discussion in Section~\ref{sec:sub:max-cliques-basics}, each edge of $G(P)$ belongs
to exactly one U-clique and exactly one I-clique (one of them possibly degenerate). This
implies that $D_T$ is 2-colorable. Indeed, we can color all its vertices that are U-cliques
white and all its vertices that are I-cliques black. If two vertices are adjacent,
both include the same edge $[S,T] \in G(P)$, and thus, one is a U-clique and
the other is an I-clique, so they have different colors. Using Theorem~\ref{Thm:D_T}, we can
conclude that $D_T$ is connected and 2-colorable, which implies that the 2-coloring is
unique, in the sense that fixing the color of any vertex determines the colors of all
other vertices. This will allow us to determine the types of all max-cliques, by the
following four-step process:
\begin{enumerate}
\item Pick an SST $T$ that belongs to a degenerate clique $\{S,T\}$. (It is easy to show that
such a $T$ always exists. We show this in Lemma~\ref{Lemma:Exist-degenerate} below.) Using
Proposition~\ref{Prop:Identify-Degenerate}, identify whether $\{S,T\}$ is a U-clique
or an I-clique.

\item Consider the vertices of $D_T$ and determine for each of them whether it is a
U-clique or an I-clique. (This is possible by the explanation above. Since we know the
``color'' of the vertex $\{S,T\}$ of $D_T$, we can determine the colors of all
other vertices.)

\item Consider a neighbor $T'$ of $T$. Note that the edge $[T,T'] \in G(P)$ belongs to
a max-clique $C$ that is a vertex of both $D_T$ and $D_{T'}$. Determine whether $C$ is
a U-clique or an I-clique. (This is possible, as by the previous step we can determine
the type for all max-cliques that are vertices of $D_T$.) Using this information about
$C \in V(D_{T'})$, determine for each vertex of $D_{T'}$ whether it is a U-clique or
an I-clique.

\item Repeat Step~3 with a ``new'' vertex of $G(P)$ every time until the types of all
max-cliques are determined. (Since $G(P)$ is connected by Claim~\ref{Claim:G(P)-connected},
we indeed reach all vertices of $G(P)$ in this way.)
\end{enumerate}

Therefore, in order to determine the types of all max-cliques we have to prove a simple
lemma on the existence of degenerate max-cliques, and to prove Theorem~\ref{Thm:D_T}.
We provide these two items now.
\begin{lemma}\label{Lemma:Exist-degenerate}
For any set $P$, $|P| \geq 5$, of points in general position in the plane, there exists
an SST that belongs to a degenerate U-clique in $G(P)$.
\end{lemma}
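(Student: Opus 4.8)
By Claim~\ref{Claim:Deg-U-clique}, it suffices to exhibit a single edge $[T,T']$ of $G(P)$ whose two swapped edges \emph{cross}: then $U(T,T')$ is a degenerate U-clique, and $T$ belongs to it. I will produce such a pair by performing an edge flip inside a carefully chosen triangulation of $\mathrm{conv}(P)$ and then transferring that flip to a spanning tree that lives inside the triangulation, so that simplicity is automatic.

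First I would produce an \emph{empty} convex quadrilateral. Since $|P|\ge 5$, the classical Erd\H{o}s--Szekeres/Klein theorem supplies four points of $P$ in convex position, hence a convex quadrilateral with vertices in $P$; take one, $Q$, of minimum area, with vertices $a,c,b,d$ in cyclic order, so that $[a,b]$ and $[c,d]$ are its crossing diagonals. A short exchange argument shows $Q$ is empty: if some $p\in P$ lay in the interior of, say, $\triangle abc$ (one of the two triangles into which $[a,b]$ cuts $Q$), then $a,p,b,d$ would be a convex quadrilateral of area $\mathrm{area}(\triangle apb)+\mathrm{area}(\triangle abd)<\mathrm{area}(\triangle acb)+\mathrm{area}(\triangle abd)=\mathrm{area}(Q)$, contradicting minimality. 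Consequently $\triangle abc$ and $\triangle abd$ are empty triangles, and the crossing point of $[a,b]$ and $[c,d]$ lies in $\mathrm{int}(Q)\subseteq\mathrm{int}(\mathrm{conv}(P))$, so $[a,b]$ and $[c,d]$ are not edges of $\mathrm{conv}(P)$.

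Next I would extend the crossing-free planar graph formed by the four sides of $Q$ and the diagonal $[a,b]$ to a triangulation $\mathcal{T}$ of $\mathrm{conv}(P)$ with vertex set $P$; emptiness of $Q$ guarantees that $\triangle abc$ and $\triangle abd$ survive as faces, so $[a,b]$ is an interior edge of $\mathcal{T}$ flippable to $[c,d]$. Let $\mathcal{T}'=(\mathcal{T}\setminus\{[a,b]\})\cup\{[c,d]\}$ be the flipped triangulation; then $\mathcal{T},\mathcal{T}'$ are crossing-free, they agree off the pair $[a,b]\leftrightarrow[c,d]$, each contains $[a,c],[a,d],[b,c],[b,d]$, and $[c,d]$ is an interior edge of $\mathcal{T}'$, so (it lies on the $4$-cycle $a$--$c$--$b$--$d$--$a$) the graph $\mathcal{T}'\setminus\{[c,d]\}$ is connected. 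Now extend the three-edge path with edges $[c,a],[a,d],[d,b]$ (all in $\mathcal{T}'\setminus\{[c,d]\}$) to a spanning tree $F$ of $\mathcal{T}'\setminus\{[c,d]\}$, and set $T=(F\setminus\{[a,d]\})\cup\{[a,b]\}$ and $T'=(F\setminus\{[a,d]\})\cup\{[c,d]\}$. Deleting $[a,d]$ splits $F$ into a component containing $a,c$ (via $[c,a]$) and one containing $b,d$ (via $[d,b]$), so both $T$ and $T'$ are spanning trees of $K(P)$; their edge sets lie inside $\mathcal{T}$, resp.\ $\mathcal{T}'$ (using $\mathcal{T}=(\mathcal{T}'\setminus\{[c,d]\})\cup\{[a,b]\}$), so both are simple; and $T\triangle T'=\{[a,b],[c,d]\}$, with $[a,b],[c,d]$ crossing. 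Hence $[T,T']\in G(P)$, and by Claim~\ref{Claim:Deg-U-clique} the clique $U(T,T')$ is a degenerate U-clique containing $T$.

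The step I expect to demand the most care is Step~1 — existence of the empty convex quadrilateral and, equivalently, the existence for $|P|\ge 5$ of \emph{some} triangulation of $P$ with a flippable edge. The minimum-area argument settles this cleanly, but one must verify that $a,p,b,d$ really are in convex position, which follows because $p$ and $d$ lie on opposite sides of $\ell(a,b)$ and the segment $[p,d]$ leaves $\triangle abc$ through the edge $[a,b]$. Everything after that is bookkeeping, kept routine by the fact that every tree constructed is a subgraph of one of the crossing-free triangulations $\mathcal{T}$, $\mathcal{T}'$.
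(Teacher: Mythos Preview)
Your proof is correct, but it takes a considerably more elaborate route than the paper's. The paper, having obtained a convex quadrilateral $[a,b,c,d]$ (cyclic order) from Erd\H{o}s--Szekeres, simply writes down an explicit SST: the path $a$--$b$--$d$--$c$ together with an edge from $a$ to every point of $P$ on the $a$-side of $\ell(b,d)$ and an edge from $c$ to every point on the $c$-side. Simplicity is immediate because the two ``fans'' live in disjoint open half-planes and share endpoints with the path. Adding the other diagonal $[a,c]$ (which crosses $[b,d]$) and removing $[b,d]$ gives a second SST, so Claim~\ref{Claim:Deg-U-clique} finishes. No emptiness of the quadrilateral, no triangulations, no flips.

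Your argument instead first upgrades the quadrilateral to an \emph{empty} one via the minimum-area trick (correctly justified: since $p\in\mathrm{int}(\triangle acb)\subset\mathrm{conv}(a,c,b,d)$ and $d$ is a vertex of that convex set, the segment $[p,d]$ stays inside the quadrilateral and hence must meet the diagonal $[a,b]$; thus $a,p,b,d$ are in convex position with strictly smaller area). You then embed the situation in a triangulation, flip the diagonal, and extract the two SSTs as spanning trees of $\mathcal{T}$ and $\mathcal{T}'$ respectively. This buys you the pleasant feature that simplicity is automatic --- every tree you produce is a subgraph of a crossing-free triangulation --- at the cost of a longer argument and an extra lemma (the empty quadrilateral) that the paper does not need. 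The paper's bare-hands construction is shorter and entirely self-contained; your triangulation-and-flip framework is more systematic and would scale better if one wanted many such pairs, but here it is overkill.
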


\begin{proof}
By the Erd\H{o}s-Szekeres theorem~\cite{ES35}, there exist four points in $P$ in convex
position. Denote these points $a,b,c,d$, such that $[a,b,c,d]$ is a convex quadrilateral
(in this order).
Consider the tree $T$ that includes the edges $[a,b],[b,d],[d,c]$, all edges that connect
$a$ to each $p \in P$ that lies above the straight line $\ell(b,d)$ and all edges that connect
$c$ to each $p \in P$ that lies below $\ell(b,d)$, as shown in Figure~\ref{fig6p4}. Clearly,
$T$ is an SST of $K(P)$. Addition of the edge $[a,c]$ to $T$ creates a self-crossing cycle, and thus,
by the discussion in Section~\ref{sec:sub:geom-meaning}, $T \in V(G)$ belongs to a degenerate
U-clique.

\begin{figure}[tb]
\begin{center}
\scalebox{0.8}{
\includegraphics{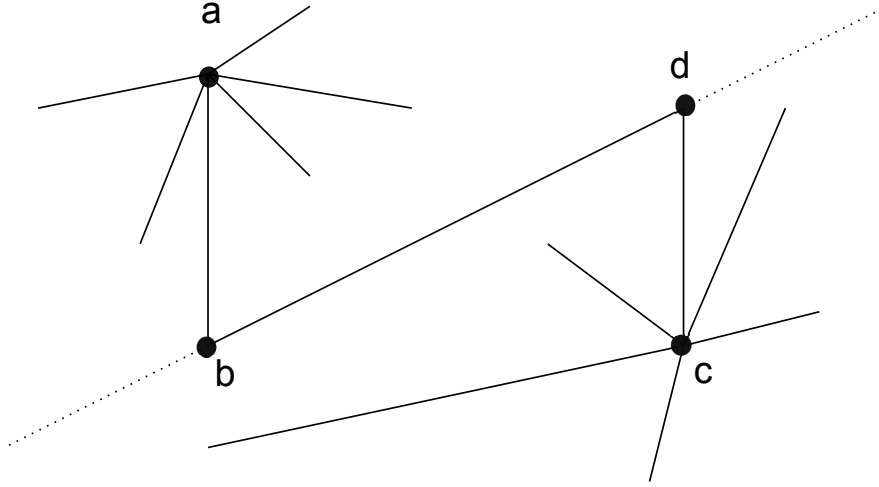}
} \caption{An illustration for the proof of Lemma~\ref{Lemma:Exist-degenerate}.} \label{fig6p4}
\end{center}
\end{figure}

\end{proof}

\medskip

\begin{proof}[Proof of Theorem~\ref{Thm:D_T}]
Suppose $C,C' \in V(D_T)$. Choose edges $[T,S] \in C$, $[T,S'] \in C'$. In order to prove
that $C$ is connected to $C'$ by a path in $D_T$, it suffices to find a sequence $S^0,S^1,\ldots,S^\ell$
of SST's, such that $S^0=S$, $S^\ell =S'$, and for each $0 \leq i< \ell$, there is a max-clique
$C_i$ of $G(P)$ that includes both $[T,S^i]$ and $[T,S^{i+1}]$. This will be done in four steps.
\begin{enumerate}
\item \textbf{Step 1.} Extend the SST $T$ to a triangulation $\mathcal{T}$ of $\mathrm{conv}(P)$.
Recall that $\mathcal{T}$ is a 2-connected graph.

\item \textbf{Step 2.} If $S \subset \mathcal{T}$, leave $S$ as it is. If not, $\mathcal{T}$
contains the graph $\tilde{T}= T \cap S = T \setminus \{e\}$. Since $\mathcal{T}$ is 2-connected,
there is another edge $e^{*}$ in $\mathcal{T}$ that connects the two components of $\tilde{T}$.
Define $S^{*} = \tilde{T} \cup \{e^{*}\}$. $S^{*}$ is an SST of $K(P)$. Note that $S^{*}$ is
included in the I-clique $I(T,S)$ (since $T \cap S^{*} = T \cap S = \tilde{T}$).

Do the same for $S'$: leave it, if $S' \subset \mathcal{T}$, or replace it by some
$S^{'*} \subset \mathcal{T}$, such that $S^{'*} \in I(T,S')$.

Step~2 allows us to restrict our attention to the case where both $S$ and $S'$ are included
in $\mathcal{T}$ (and all intermediate SST's will be included in $\mathcal{T}$, as well).

\item \textbf{Step 3.} Assume $T \cap S = T \setminus \{e\}$, $T \cap S' = T \setminus \{e'\}$.
If $e=e'$, then $[T,S]$ and $[T,S']$ belong to the same I-clique. Suppose $e \neq e'$. Since
$T$ is a simple tree, there is a unique simple path in $T$ whose edges are (in this
order) $e_1,e_2,\ldots,e_m$, with $e_1=e$ and $e_m = e'$. For $1 <i <m$, choose an edge
$e^*_i$ of $\mathcal{T}$ other than $e_i$ that connects
the two components of $T \setminus \{e_i\}$, and define $S_i = T \setminus \{e_i\} \cup \{e^*_i\}$.
In addition, let $S_1=S$ and $S_m=S'$. Now we only have to connect $S_{i-1}$ with $S_i$
for $i=2,3,\ldots,m$. (Note that the desired sequence $S^0,S^1,\ldots,S^\ell$ will be the
concatenation of the sequences connecting $S_1$ to $S_2$, $S_2$ to $S_3$ etc., and $\ell$
will be the sum of their lengths.)

\item \textbf{Step 4.} Suppose $\mathcal{T}$ is a triangulation of $\mathrm{conv}(P)$
with vertex set $P$. Let $T,S,S'$ be SST's of $K(P)$ that are included in $\mathcal{T}$.
Assume that both $S$ and $S'$ are adjacent to $T$ in $G(P)$,  $S \cap T  = T \setminus \{e\}$,
$S' \cap T = T \setminus \{e'\}$, $e \neq e'$, and $e,e'$ share a vertex. Suppose $e=[x,y], e'=[x',y]$,
and let $k=\deg(y)-2$. Denote the edges of $T$ that emanate from
$y$ by $e,e',f_1,f_2,\ldots,f_k$. Removal of all edges that emanate from $y$ divides $T$ into
$k+3$ connected components: $D$ that includes $x$, $D'$ that includes $x'$, $F_s$
($1 \leq s \leq k$) that includes the second endpoint of $f_s$,
and $F_{k+1}$ that consists of the isolated vertex $y$. Since $\mathcal{T}$ is 2-connected, we
can extend the forest $B= D \cup D' \cup F_1 \cup \ldots \cup F_k$ into an SST of $K(P \setminus \{y\})$
by adding $k+1$ edges of $\mathcal{T}$.

Let $\bar{S}$ be such an extension. $\bar{S}$ includes a unique simple path $\pi$ from $x$ to
$x'$. This path starts in the component $D$ of $B$, and ends in $D'$. It visits some of the
intermediate components $F_i$ in a particular order (see Figure~\ref{fig8p6}). By appropriately labelling
these components, we may assume that $\pi$ visits $D,F_1,F_2,\ldots,F_l,D'$ in this order ($0 \leq l \leq k$).
Let $g_0$ be the edge of $\pi$ that passes from $D$ to $F_1$, $g_i$ ($1 \leq i \leq l-1$) the edge
that passes from $F_i$ to $F_{i+1}$, and $g_l$ be the edge that passes from $F_l$ to $D'$. (If $l=0$,
then there is only one edge $g=g_0$ that passes directly from $D$ to $D'$.)

Now we can describe the passage from $[T,S]$ to $[T,S']$. Let us start with the simple case $l=0$.
Put $S_1=T \setminus \{e\} \cup \{g\}$, $S_2 = T \setminus \{e'\} \cup \{g\}$. Then
$T \cap S = T \cap S_1$, $T \cup S_1 = T \cup S_2$, and $T \cap S_2 = T \cap S'$. Thus, $\{T,S,S_1\}$
and $\{T,S_2,S'\}$ are included in I-cliques, while $\{T,S_1,S_2\}$ is included in a U-clique. Hence,
$(S,S_1,S_2,S')$ is the required sequence.

When $l>0$, define
\begin{align*}
&S_1=T \setminus \{e\} \cup \{g_0\}, \qquad S_2=T \setminus \{f_1\} \cup \{g_0\}, \\
&S_3=T \setminus \{f_1\} \cup \{g_1\}, \qquad S_4=T \setminus \{f_2\} \cup \{g_1\}, \\
&\ldots \\
&S_{2l-1}=T \setminus \{f_{l-1}\} \cup \{g_{l-1}\}, \qquad S_{2l}=T \setminus \{f_l\} \cup \{g_{l-1}\}, \\
&S_{2l+1}=T \setminus \{f_l\} \cup \{g_l\}, \qquad \quad S_{2l+2}=T \setminus \{e'\} \cup \{g_l\}.
\end{align*}
Then each of the triples $\{T,S,S_1\},\{T,S_{2i},S_{2i+1}\}$ (for $1 \leq i \leq l$), and $\{T,S_{2l+2},S'\}$
is included in an I-clique, and each of the triples $\{T,S_{2i-1},S_{2i}\}$ (for $1 \leq i \leq l+1$) is
included in a U-clique. Therefore, $(S,S_1,S_2,\ldots,S_{2l+1},S_{2l+2},S')$ is the required sequence.
This completes the proof of the theorem.

\begin{figure}[tb]
\begin{center}
\scalebox{0.8}{
\includegraphics{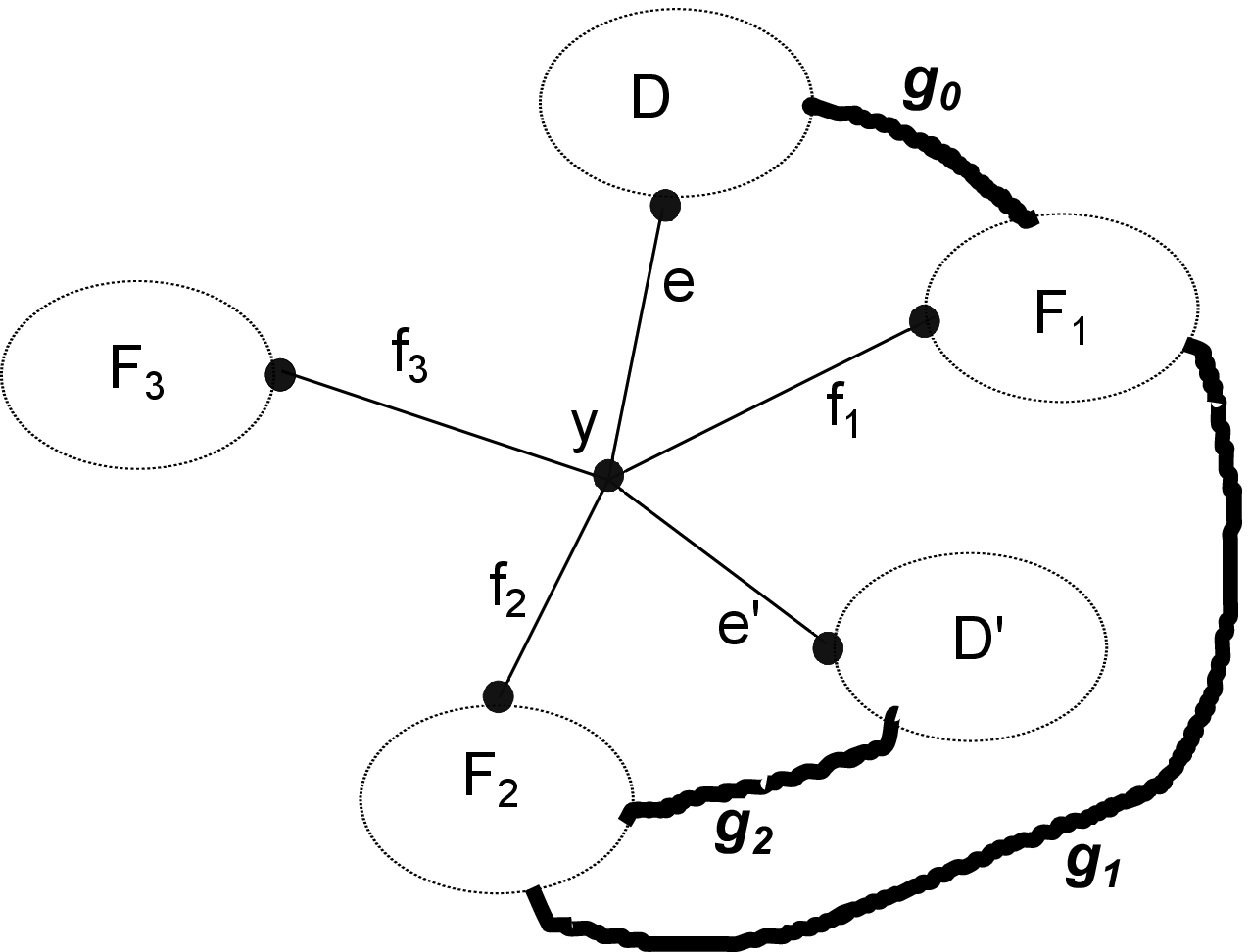}
} \caption{An illustration for the proof of Theorem~\ref{Thm:D_T}.}
\label{fig8p6}
\end{center}
\end{figure}
\end{enumerate}

\end{proof}

As explained after the statement of Theorem~\ref{Thm:D_T}, the Theorem and
Lemma~\ref{Lemma:Exist-degenerate} imply the following corollary.
\begin{corollary}
Assume $|P| \geq 5$. Given $G(P)$, we can determine for each max-clique in it whether it is a U-clique 
or an I-clique.
\end{corollary}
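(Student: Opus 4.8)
The plan is to combine the ingredients assembled above --- Proposition~\ref{Prop:Identify-Degenerate}, Lemma~\ref{Lemma:Exist-degenerate}, and Theorem~\ref{Thm:D_T} --- into the propagation scheme sketched after the statement of Theorem~\ref{Thm:D_T}. The key structural fact I would use is that, by the discussion in Section~\ref{sec:sub:max-cliques-basics}, every edge of $G(P)$ lies in exactly one U-clique and exactly one I-clique (one of them possibly degenerate); hence for each SST $T$ the graph $D_T$ is properly $2$-coloured by the rule that U-cliques are white and I-cliques are black, since adjacent vertices of $D_T$ share an edge of $G(P)$ and that edge contributes one clique of each type. Because $D_T$ is connected by Theorem~\ref{Thm:D_T}, this $2$-colouring is \emph{rigid}: knowing the type of a single max-clique in $D_T$ forces the types of all the others.

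Next I would establish a base case. The hypothesis $|P|\ge 5$ is exactly what both Theorem~\ref{Thm:D_T} and Lemma~\ref{Lemma:Exist-degenerate} require; by the latter there is an SST $T_0$ lying in a degenerate U-clique. In fact, any degenerate max-clique $\{S,T\}$ can be classified outright: by Corollary~\ref{Cor:At-least-one-non-degenerate} the edge $[S,T]$ of $G(P)$ also lies in a non-degenerate max-clique, and Proposition~\ref{Prop:Identify-Degenerate} reads off the type of $\{S,T\}$ from whether that other max-clique has $3$ or at least $4$ vertices. So at $T_0$ the type of at least one vertex of $D_{T_0}$ is known, and by rigidity the type of every max-clique through $T_0$ is determined.

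Finally I would propagate across $G(P)$. Suppose the types of all max-cliques through an SST $T$ are known and let $T'$ be a neighbour of $T$ in $G(P)$; the edge $[T,T']$ lies in a max-clique $C$ that contains both $T$ and $T'$, so $C$ is simultaneously a vertex of $D_T$ and of $D_{T'}$, its type is already known from $D_T$, and by rigidity of the connected graph $D_{T'}$ the types of all max-cliques through $T'$ follow. Since every max-clique has at least two vertices and any two of them are adjacent in $G(P)$, every max-clique is a vertex of some $D_T$; and since $G(P)$ is connected (Claim~\ref{Claim:G(P)-connected}), iterating this step along a walk that visits every SST classifies every max-clique. I do not anticipate a genuine obstacle at this stage: the single substantial input is the connectedness of $D_T$ (Theorem~\ref{Thm:D_T}), which is precisely what upgrades the purely local $2$-colourability of $D_T$ to a rigid global labelling, and the remainder is the bookkeeping of chaining these labellings together along the connected graph $G(P)$.
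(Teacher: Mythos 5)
Your proposal is correct and follows essentially the same route as the paper: the paper's own argument is exactly the four-step propagation scheme described after the statement of Theorem~\ref{Thm:D_T}, using Lemma~\ref{Lemma:Exist-degenerate} and Proposition~\ref{Prop:Identify-Degenerate} for the base case, the connectedness of $D_T$ for rigidity of the $2$-colouring, and the connectedness of $G(P)$ for global propagation. No gaps.
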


\section{Identification of Stars and Brushes}
\label{sec:stars-and-brushes}

In this section we use the results of Section~\ref{sec:max-cliques} to identify the vertices of
$G(P)$ that represent stars (i.e., SSTs of diameter 2) and brushes (i.e., SSTs of diameter 3). The
stars, considered in Section~\ref{sec:sub:stars}, are determined only up to an automorphism of $K(P)$
as a geometric graph. The brushes, considered in Section~\ref{sec:sub:brushes}, are determined uniquely
given a determination of the stars.

\subsection{Identification of Stars}
\label{sec:sub:stars}

\begin{definition}
A \emph{star} is a tree of diameter 2.
\end{definition}

\begin{notation}
For $x \in P$, we call the spanning star whose center is $x$ an \emph{$x$-star},
and denote it by $S(x)$.
\end{notation}

\begin{theorem}\label{Thm:Stars}
A vertex $T \in G(P)$ is a star if and only if all U-cliques that include $T$ are
of size 3.
\end{theorem}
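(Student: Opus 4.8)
The plan is to prove both implications by relating the structure of a spanning tree $T$ to the U-cliques that contain it. Recall from Section~\ref{sec:sub:geom-meaning} that a U-clique through an edge $[T,T']$ of $G(P)$, with $T\setminus T'=\{e_1\}$ and $T'\setminus T=\{e_2\}$, corresponds to the graph $\bar T = T\cup\{e_2\}$: it has a unique cycle, and the U-clique has size $k+2$ where $k$ is the number of edges of that cycle other than $e_1,e_2$ that can be removed to leave an SST. When the cycle is simple of length $m$, the U-clique has size $m$. So a U-clique through $T$ of size $\geq 4$ corresponds to a simple cycle of length $\geq 4$ obtained by adding one edge $e_2$ to $T$ (equivalently, an edge $e_2\notin T$ whose fundamental cycle in $T$, i.e.\ the path in $T$ joining the endpoints of $e_2$, has length $\geq 3$ and, together with $e_2$, bounds a simple polygon).

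First I would prove the ``only if'' direction: if $T$ is a star $S(x)$, every U-clique through $T$ has size $3$. Take any edge $e_2=[a,b]\notin T$; since $T$ is the $x$-star, $a\neq x$ and $b\neq x$, so the path in $T$ from $a$ to $b$ is $a$--$x$--$b$, of length $2$. Thus the fundamental cycle of $e_2$ is the triangle $\triangle(a,x,b)$, which has length $3$; adding $e_2$ to $T$ creates a cycle with exactly one removable edge other than $e_1=[x,a]$ and $e_2$ (namely $[x,b]$), when the triangle is empty, and none when it is not — in the latter case the U-clique is degenerate, of size $2$, which is not of size $3$. Here I must be slightly careful: the claim is that all U-cliques have size $3$, but a degenerate U-clique has size $2$. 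The correct reading (consistent with the later use of the theorem) is that no U-clique through $T$ has size $\geq 4$; I would phrase the statement accordingly or note that ``size $3$'' should be read as ``size $\leq 3$''. In any case, for a star every fundamental cycle is a triangle, so no U-clique through it can have size $\geq 4$.

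For the ``if'' direction — which I expect to be the main obstacle — I would prove the contrapositive: if $T$ is not a star, then some U-clique through $T$ has size $\geq 4$. Since $T$ has diameter $\geq 3$, there is a path $u_0$--$u_1$--$u_2$--$u_3$ in $T$ on four distinct vertices. It suffices to produce an edge $e_2\notin T$ whose fundamental cycle in $T$ is simple of length $\geq 4$; then the corresponding $\bar T$ has a simple cycle of length $\geq 4$, and by Claim~\ref{Claim:Deg-U-clique} and the discussion in Section~\ref{sec:sub:geom-meaning} the associated U-clique has size equal to that cycle length $\geq 4$. The natural candidate is $e_2=[u_0,u_3]$, whose fundamental cycle is the $4$-cycle through $u_0,u_1,u_2,u_3$; the issue is that $[u_0,u_3]$ might cross an edge of $T$, making $\bar T$ non-simple. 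To handle this I would argue geometrically: among all pairs $p,q$ at $T$-distance $\geq 3$, choose one minimizing some suitable measure (e.g.\ the area of $\mathrm{conv}$ of the $T$-path from $p$ to $q$, or the number of vertices of $P$ inside it), or alternatively extend $T$ to a triangulation $\mathcal{T}$ of $\mathrm{conv}(P)$ and walk along the $T$-path $u_0,\dots,u_k$ ($k\geq 3$) looking at the triangles of $\mathcal{T}$ incident to it, to locate a ``short-cut'' edge of $\mathcal{T}$ that skips at least two vertices of the path and hence has a fundamental cycle of length $\geq 4$ that is simple (being built from triangulation edges, it crosses no edge of $T$). This triangulation argument is exactly the technique used in the proof of Proposition~\ref{Prop:Deg-I-clique} and Lemma~\ref{Lemma:I-clique-not-leaf}, and I expect it to adapt here. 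The delicate point is ensuring the short-cut edge produces a cycle of length $\geq 4$ rather than collapsing to a triangle; this should follow from choosing the path to be of length exactly $3$ (diameter-realizing, so $u_0$ and $u_3$ are leaves of the relevant subtree) and using that the chord of $\mathcal{T}$ separating $u_0$ from $u_3$ inside the relevant region skips $u_1$ and $u_2$.
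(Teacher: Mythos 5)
There is a genuine gap, and it shows up in both directions of your argument; the root cause is a misreading of when a U-clique is degenerate. In the ``only if'' direction you claim that for a star $T=S(x)$ and $e_2=[a,b]$, the removability of $[x,b]$ depends on the triangle $\triangle(a,x,b)$ being empty of points of $P$, and that otherwise the U-clique is degenerate of size $2$; this is false. By Claim~\ref{Claim:Deg-U-clique}, $U(T_1,T_2)$ is degenerate if and only if $e_1$ and $e_2$ cross, and an edge of $\bar T=T\cup\{e_2\}$ is removable as soon as $\bar T$ is crossing-free and the edge lies on the cycle --- emptiness of the triangle is irrelevant. For a star, $e_1$ and $e_2$ are two edges of a triangle, hence share a vertex and cannot cross, so a star \emph{never} lies in a degenerate U-clique and every U-clique through it has size exactly $3$. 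Your proposed weakening of the statement to ``size $\leq 3$'' is therefore unnecessary, and worse, it destroys the theorem: by the Remark immediately following it in the paper, there exist non-stars all of whose U-cliques are degenerate or of size $3$, so ``all U-cliques have size $\leq 3$'' does not characterize stars.

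The same phenomenon defeats your plan for the ``if'' direction. You propose to exhibit, for every non-star $T$, an edge $e_2\notin T$ whose fundamental cycle is a \emph{simple} cycle of length $\geq 4$, i.e.\ a non-degenerate U-clique of size $\geq 4$. Such a clique need not exist: take $c,a,d,b$ in convex position and $T$ the path $[c,a],[a,b],[b,d]$ (embedded in a larger configuration if you wish). Every $e_2$ not meeting $a$ or $b$ is forced to cross the internal edge $[a,b]$, so the only U-cliques through $T$ have sizes $3$ and $2$, and no simple $4$-cycle can be created. The paper's proof sidesteps this precisely by aiming only for a U-clique of size $\neq 3$: deleting an internal edge $[a,b]$, two-coloring the components, and producing (via a triangulation extending $T$) a colorful edge $e$ that avoids both $a$ and $b$ and crosses no edge of $T\setminus\{[a,b]\}$. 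If $e$ does not cross $[a,b]$ the resulting U-clique has size $\geq 4$; if it does (Case~3b of the paper), the U-clique is degenerate of size $2$ --- either way $\neq 3$, which suffices because, by the corrected ``only if'' direction, size $2$ also certifies that $T$ is not a star. Without this ``size $\neq 3$'' formulation your triangulation walk cannot be completed.
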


\begin{proof}
Recall that by the geometric interpretation
of max-cliques presented in Section~\ref{sec:sub:geom-meaning}, if $U(S,T)$ is a non-degenerate
U-clique then all its elements are obtained from the graph $S \cup T$ by removing an edge from
its unique cycle. In particular, $|U(S,T)|$ is the length of the unique cycle of $S \cup T$. If
the unique cycle of $S \cup T$ is self-crossing (which can occur only if its length is $\geq 4$)
then $U(S,T)$ is degenerate.

Assume that $T \in G(P)$ is an $x$-star, and let $U(S,T)$ be a U-clique that includes $T$. Since
$T$ is a star, $S \cup T$ is obtained from $T$ by adding an edge that connects two leaves $v,w$
of $T$. Hence, the unique cycle of $S \cup T$, $([x,v],[v,w],[w,x])$, is of length 3. Thus,
$|U(S,T)|=3$, as asserted.

On the other hand, we show that if $T \in G(P)$ is an SST of diameter $\geq 3$, then $T$
belongs to a U-clique of size $\neq 3$. Since $\mathrm{diam}(T) \geq 3$, $T$ contains an
internal edge $[a,b]$ (i.e., both $a$ and $b$ are not leaves of $T$). Consider the graph
$T \setminus \{[a,b]\}$, that is obviously a forest with two connected components. Color the
vertices of the connected component that includes $a$ black and the vertices of the component
that includes $b$ white.

We would like to show that there exists a colorful edge $e$ that uses neither $a$ nor $b$ and 
does not cross any edge of $T \setminus \{[a,b]\}$ (see
Figure~\ref{fig9p7}). This will
conclude the proof, since in such a case, denoting $S = T \cup \{e\} \setminus \{[a,b]\}$,
we find that $S$ is an SST, $[S,T] \in G(P)$, and the unique cycle of the graph
$S \cup T = T \cup \{e\}$ is of length $\geq 4$. Then, by the geometric interpretation above,
if $e$ crosses $[a,b]$ then $U(S,T)$ is a degenerate U-clique, and otherwise, $|U(S,T)|$ is
equal to the length of the unique cycle of $S \cup T$, that is $\geq 4$. Hence, in any case,
$T$ lies in a U-clique of size $\neq 3$.

\begin{figure}[tb]
\begin{center}
\scalebox{0.8}{
\includegraphics{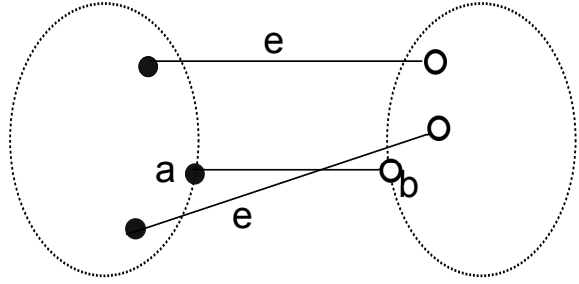}
} \caption{An illustration for the proof of Theorem~\ref{Thm:Stars} -- beginning.}
\label{fig9p7}
\end{center}
\end{figure}

We extend $T$ to a triangulation $\mathcal{T}$ of $\mathrm{conv}(P)$, and consider three
cases:
\begin{enumerate}
\item \textbf{Case 1: $[a,b]$ is a boundary edge of $\mathrm{conv}(P)$.} The edge $[a,b]$, being
colorful, is contained in a colorful triangle $\triangle abc \in \mathcal{T}$. The other
colorful edge of $\triangle abc$ is not a boundary edge of $\mathrm{conv}(P)$, as
otherwise, one of the two connected components of $T \setminus \{[a,b]\}$ consists of a
single vertex, which contradicts the assumption that $[a,b]$ is an internal edge of $T$.
Denote that colorful edge $[a,c]$. The neighbors of $a$ in $\mathcal{T}$ constitute a path
$[b,c,d_1,d_2,\ldots]$. Since the connected component of $a$ in the graph $T \setminus \{[a,b]\}$
(i.e., the ``black'' component) includes more than one vertex, at least one of the $d_i$'s
is black. Thus, the path contains a colorful edge $e$ that uses neither $a$ nor $b$
(see Figure~\ref{fig10p8}). Furthermore, as $e$ belongs to $\mathcal{T}$, it does not
cross any edge of $T$. Hence, $e$ is the edge whose existence was claimed.

\begin{figure}[tb]
\begin{center}
\scalebox{0.8}{
\includegraphics{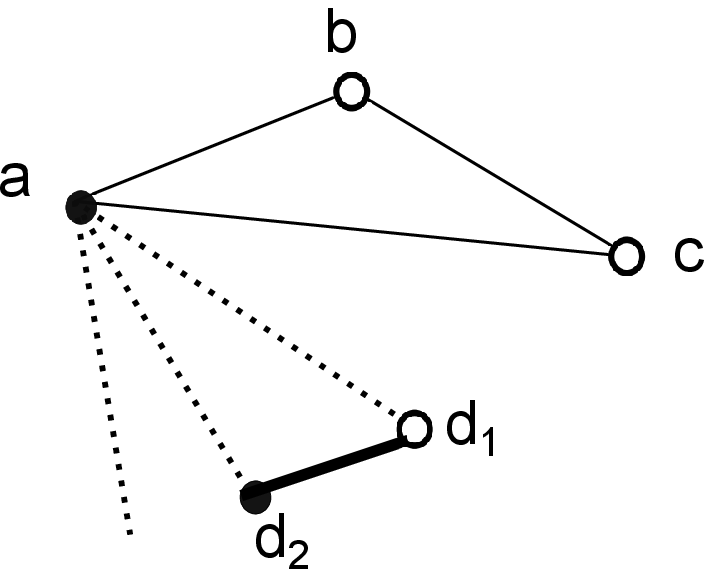}
} \caption{An illustration for the proof of Theorem~\ref{Thm:Stars} -- case 1.}
\label{fig10p8}
\end{center}
\end{figure}

\item \textbf{Case 2: $b$ is an internal vertex of $\mathrm{conv}(P)$.} In this case, $[a,b]$ is an
internal edge of $\mathcal{T}$, and thus, it is contained in two triangles $\triangle abc,
\triangle abd \in \mathcal{T}$. We further divide this case into two sub-cases:

\begin{enumerate}
\item \textbf{Case 2a: Either $c$ or $d$ (or both) are black.} Assume w.l.o.g. that $c$ is black.
Since $b$ is an internal vertex of $\mathrm{conv}(P)$, the neighbors of $b$ in $\mathcal{T}$
constitute a cycle $[d,a,c,d_1,d_2,\ldots,d_k,d]$. Since the connected component of $b$ in
the graph $T \setminus \{[a,b]\}$ contains more than one vertex, at least one of the $d_i$'s
or $d$ is white. Since $a,c$ are black, this implies that the cycle includes at least two colorful
edges, and at least one of them uses neither $a$ nor $b$ (see left part of Figure~\ref{fig11p8}).
As in Case 1, this is the desired edge $e$.

\begin{figure}[tb]
\begin{center}
\scalebox{0.8}{
\includegraphics{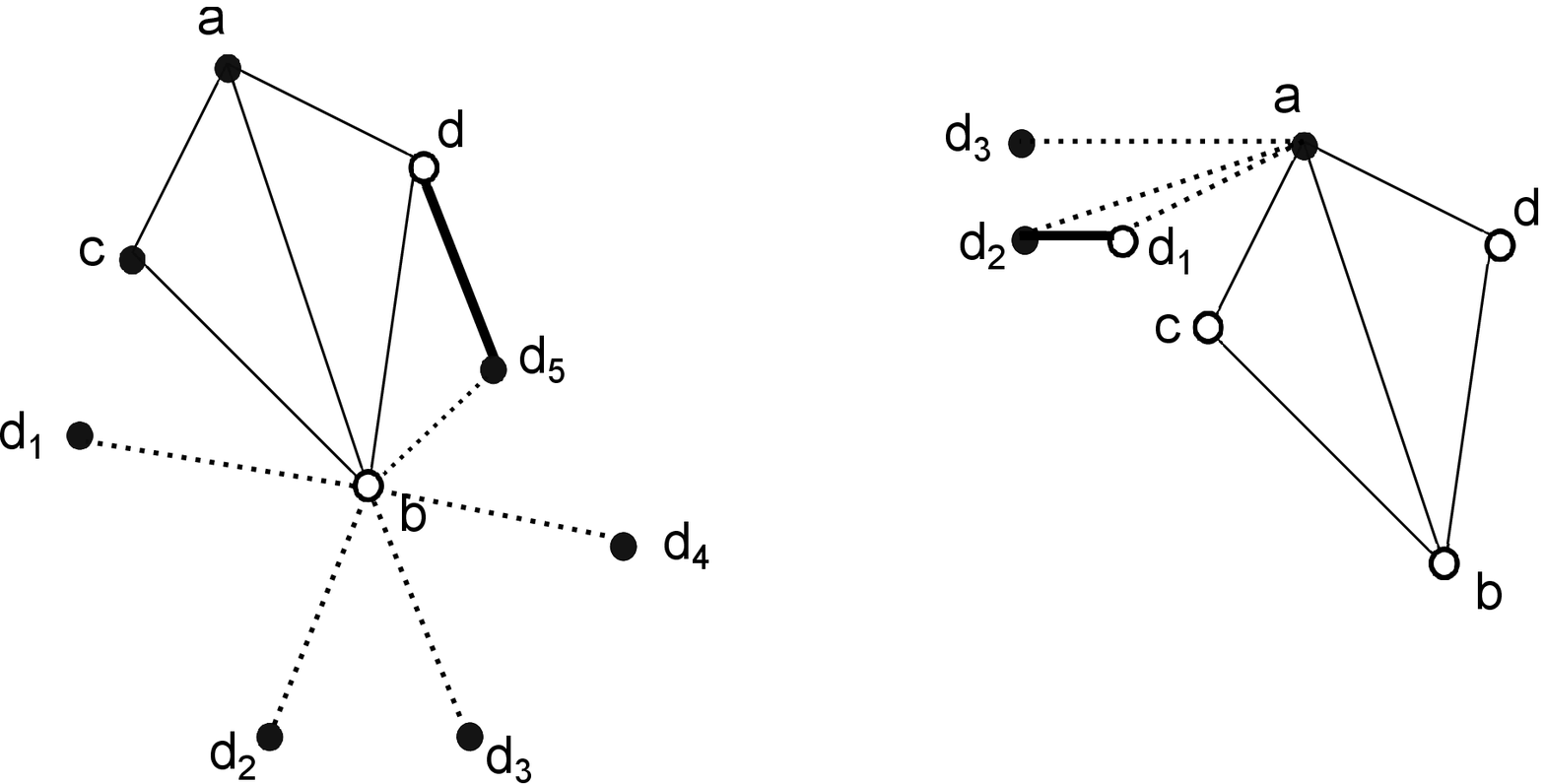}
} \caption{An illustration for the proof of Theorem~\ref{Thm:Stars} -- case 2.}
\label{fig11p8}
\end{center}
\end{figure}

\item \textbf{Case 2b: Both $c$ and $d$ are white.} By the same argument as above, $a$ has a black
neighbor in $\mathcal{T}$. As the neighbors of $a$ in $\mathcal{T}$ form a (possibly closed) path, at least
one edge of this path is colorful (see right part of Figure~\ref{fig11p8}). This edge uses neither $a$ nor $b$ (as both edges that
use $b$ in this path, $[b,c]$ and $[b,d]$, are not colorful). As in the previous cases, this is
the desired edge $e$.


\end{enumerate}

\item \textbf{Case 3: Both $a$ and $b$ are boundary vertices of $\mathrm{conv}(P)$, and $[a,b]$ is
a diagonal of $\mathrm{conv}(P)$.} As in Case 2, $[a,b]$ lies in two triangles $\triangle abc,
\triangle abd \in \mathcal{T}$. We further divide this case to two sub-cases:

\begin{enumerate}
\item \textbf{Case 3a: $c$ and $d$ are of the same color.} W.l.o.g., $c$ and $d$ are white. By the same
arguments as above, $a$ has a black neighbor, and thus, the path of $a$'s neighbors includes a
colorful edge (see left part of Figure~\ref{fig13p9}). This edge does not use $b$, as both edges that use $b$
(that are $[b,c],[b,d]$) are not colorful, and it clearly does not use $a$.

\begin{figure}[tb]
\begin{center}
\scalebox{0.8}{
\includegraphics{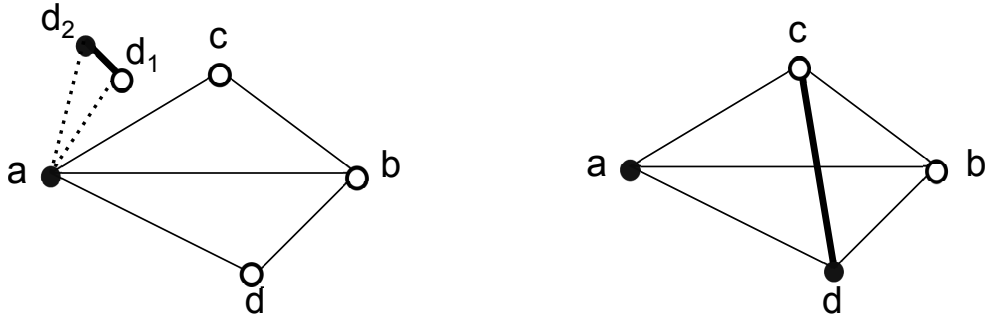}
} \caption{An illustration for the proof of Theorem~\ref{Thm:Stars} -- Case 3.}
\label{fig13p9}
\end{center}
\end{figure}

\item \textbf{Case 3b: $c$ and $d$ are of different colors.} In this case, the edge $[c,d]$ is as
desired, since it is colorful, does not use $a,b$, and does not cross edges of $T \setminus \{[a,b]\}$
(see right part of Figure~\ref{fig13p9}). Note that since $[c,d]$ crosses $[a,b]$, the U-clique that includes
$T$ in this case is degenerate. This completes the proof of the theorem.


\end{enumerate}
\end{enumerate}
\end{proof}

\begin{remark}
It may happen that $T \in G(P)$ is not a star, but all U-cliques that contain $T$ are
either degenerate or of size 3.
\end{remark}


\subsection{Identification of Brushes in $G(P)$}
\label{sec:sub:brushes}

\begin{definition}
Let $P$ be a set of points in general position in the plane, and let $p,q \in P$. A
\emph{$pq$-brush} is an SST of diameter 3 whose only internal edge is $[p,q]$.
\end{definition}

Figure~\ref{fig15p9} shows an example of a $pq$-brush.

\begin{figure}[tb]
\begin{center}
\scalebox{0.8}{
\includegraphics{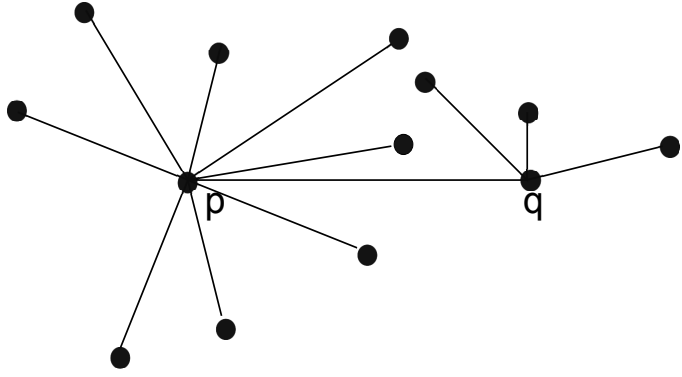}
} \caption{A $pq$-brush.}
\label{fig15p9}
\end{center}
\end{figure}

In this subsection we aim at identifying the vertices of $G(P)$ that represent brushes.
The identification uses distances in the graph $G(P)$, defined (as usual) as the length
of the shortest path between two vertices, and denoted by $d_{G(P)}(S,T)$. Note
that by the structure of $G(P)$, it is clear that for any pair of vertices $S,T \in G(P)$,
we have
\[
d_{G(P)}(S,T) \geq \frac{1}{2} |\Delta(S,T)|,
\]
where $\Delta(S,T)$ is the symmetric difference between the edge sets of the graphs $S$ and $T$.

\begin{theorem}\label{Thm:Brush}
An SST $T \in G(P)$ is a $pq$-brush if and only if it is not a star and
\begin{equation}\label{Eq:Brush1}
d_{G(P)} (T,S(p)) + d_{G(P)} (T,S(q)) = n-2.
\end{equation}
\end{theorem}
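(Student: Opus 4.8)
The plan is to compute both distances $d_{G(P)}(T,S(p))$ and $d_{G(P)}(T,S(q))$ separately and then combine them. Throughout, write $n=|P|$, so every SST has $n-1$ edges. The lower bound $d_{G(P)}(S,T)\ge\frac12|\Delta(S,T)|$ will be the workhorse for lower bounds on distances, and for upper bounds I would exhibit explicit edit sequences through SSTs (each single edge-swap must keep the tree simple, so some care is needed to route swaps through a triangulation, exactly as in the proof of Theorem~\ref{Thm:D_T}).

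First I would establish a general formula for $d_{G(P)}(T,S(x))$ for an arbitrary SST $T$ and $x\in P$. Let $k=\deg_T(x)$ be the degree of $x$ in $T$. The star $S(x)$ shares with $T$ exactly the edges of $T$ incident to $x$ that are \emph{not} also incident to some... more precisely, $S(x)\cap T$ consists of those edges $[x,w]\in T$; there are $k$ of them, so $|\Delta(T,S(x))| = (n-1-k)+(n-1-k) = 2(n-1-k)$, giving $d_{G(P)}(T,S(x))\ge n-1-k$. For the matching upper bound one repeatedly picks a non-star edge $e=[u,v]$ of the current tree with $u,v\ne x$ (it has an endpoint, say $v$, in the component of the current tree$-e$ not containing $x$), deletes it, and adds $[x,v]$; this keeps the tree simple because... here is where a geometric argument is needed: $[x,v]$ need not be non-crossing with the rest. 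The clean fix, as in Theorem~\ref{Thm:D_T}, is to first move $T$ inside a triangulation $\mathcal T\supset S(x)$ is impossible in general, so instead I would process the edges of $T$ not incident to $x$ in an order given by a rooted structure at $x$: root $T$ at $x$; for a deepest non-incident edge $[u,v]$ with $v$ the child, the subtree below $v$ can be reattached directly to $x$ because $v$ "sees" $x$ along... — this still requires justification. The cleanest route is: for each $x$, $d_{G(P)}(T,S(x)) = n-1-\deg_T(x)$, proved by the lower bound above plus an induction reattaching one leaf-adjacent edge at a time to $x$ while staying simple (a leaf $v$ of $T$ with $v\ne x$ and $[x,v]\notin T$: since $T$ is an SST one can reroute so that replacing $v$'s edge by $[x,v]$ stays simple — this is a small geometric lemma I would isolate).

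Granting $d_{G(P)}(T,S(x)) = n-1-\deg_T(x)$, the theorem becomes pure bookkeeping. If $T$ is a $pq$-brush, then $p$ and $q$ are its only internal vertices, all other $n-2$ vertices are leaves, and each leaf is adjacent to exactly one of $p,q$; hence $\deg_T(p)+\deg_T(q) = (n-2) + 2$ (the $n-2$ leaves split between $p$ and $q$, plus the edge $[p,q]$ counted once in each degree). Therefore $d_{G(P)}(T,S(p)) + d_{G(P)}(T,S(q)) = 2(n-1) - (\deg_T(p)+\deg_T(q)) = 2(n-1) - n = n-2$, which is \eqref{Eq:Brush1}. Conversely, suppose $T$ is not a star and satisfies \eqref{Eq:Brush1}; then $\deg_T(p)+\deg_T(q) = n$. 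I would argue that this forces $[p,q]\in T$ and that $p,q$ are the only non-leaves: if $[p,q]\notin T$ then the $\deg_T(p)$ neighbors of $p$ and the $\deg_T(q)$ neighbors of $q$ are disjoint sets of vertices, all distinct from $p,q$ themselves, giving $n \le \deg_T(p)+\deg_T(q) \le n-2$, a contradiction; so $[p,q]\in T$. Removing $[p,q]$ splits $T$ into a component $A\ni p$ and $B\ni q$; the $\deg_T(p)-1$ non-$q$ neighbors of $p$ lie in $A$ and the $\deg_T(q)-1$ non-$p$ neighbors of $q$ lie in $B$, accounting for $(\deg_T(p)-1)+(\deg_T(q)-1)+2 = n$ vertices, i.e. every vertex other than $p,q$ is a leaf adjacent to $p$ or to $q$. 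Hence $T$ has diameter at most $3$; since $T$ is not a star, its diameter is exactly $3$ and its unique internal edge is $[p,q]$, so $T$ is a $pq$-brush.

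**Main obstacle.** The real content — and the step I expect to be the crux — is the exact distance formula $d_{G(P)}(T,S(x)) = n-1-\deg_T(x)$, specifically its upper bound: showing that one can reach $S(x)$ from $T$ in exactly $n-1-\deg_T(x)$ single edge-swaps \emph{while every intermediate graph is a simple spanning tree}. The lower bound is immediate from $d_{G(P)}\ge\frac12|\Delta|$, and the combinatorial brush characterization is routine once the formula is in hand; but the simplicity constraint on intermediate trees means one cannot naively reattach an arbitrary edge to $x$, and I would need a geometric lemma (in the spirit of the triangulation arguments used for Theorem~\ref{Thm:D_T} and Theorem~\ref{Thm:Stars}) guaranteeing that a suitable leaf of the current tree can always be rerouted to $x$ without creating a crossing.
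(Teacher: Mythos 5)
Your reduction of the theorem to a distance formula is sound in its combinatorial part: granting $d_{G(P)}(T,S(x))=n-1-\deg_T(x)$, the bookkeeping in both directions works (with one small reordering needed in the converse: the disjointness of $N_T(p)$ and $N_T(q)$ when $[p,q]\notin T$ is not automatic --- a common neighbor is possible in general --- but it does follow once you first observe that $\deg_T(p)+\deg_T(q)=n$ together with $\sum_v\deg_T(v)=2(n-1)$ forces every other vertex to be a leaf). In fact the converse direction needs only the trivial lower bound $d_{G(P)}\ge\frac12|\Delta|$, which is exactly how the paper argues in Proposition~\ref{Prop:Brush1}.

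The genuine gap is the one you yourself flag: the upper bound $d_{G(P)}(T,S(x))\le n-1-\deg_T(x)$, i.e.\ the claim that from any SST one can reach $S(x)$ by swaps each of which increases $\deg(x)$ while preserving simplicity. This is the entire geometric content of the forward direction, and it is not proved in your proposal; the needed visibility statement (``some non-neighbor $w$ of $x$ has $[x,w]$ crossing no edge of the current tree'') is far from routine when $x$ is an arbitrary (possibly interior) point and the blocking edges can be numerous --- note the paper's visibility lemma (Lemma~\ref{Lemma:Sees}) is proved only under the strong hypothesis that all edges emanate from just two vertices, and the paper explicitly remarks that in the geometric setting one has only the inequality $d_{G(P)}\ge\frac12|\Delta|$ in general. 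Moreover, your formula is strictly stronger than what the theorem requires. The paper needs the upper bound only when $T$ is a $pq$-brush and $x\in\{p,q\}$, and there it is nearly immediate (Lemma~\ref{Lemma:Brush1}): since every edge of a brush emanates from $p$ or $q$, one transfers the edge $[p,z]$ of minimal angle with $[p,q]$ over to $q$ without creating a crossing, producing a path $S(q)=T_0,\dots,T_{n-2}=S(p)$ through brushes that passes through $T$ and certifies both distances at once. So the right fix is not to prove your general formula but to restrict it to brushes, where the rotation argument closes the gap in one line; as written, the proposal leaves its central step unestablished.
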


For sake of convenience, we divide the theorem into two propositions.
\begin{proposition}\label{Prop:Brush1}
Assume that $T \in G(P)$ satisfies
\[
d_{G(P)} (T,S(p)) + d_{G(P)} (T,S(q)) = n-2.
\]
Then $T$ is a $pq$-brush, or $T=S(p)$, or $T=S(q)$.
\end{proposition}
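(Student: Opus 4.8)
The plan is to analyze what it means, combinatorially and geometrically, for the distance $d_{G(P)}(T, S(x))$ to be small, and to extract from the hypothesis a rigid structural description of $T$. The first observation is the lower bound $d_{G(P)}(S,T) \ge \frac12|\Delta(S,T)|$ noted just before the statement. For an SST $T$ and the $x$-star $S(x)$, the symmetric difference $\Delta(T, S(x))$ consists of the $n-1$ edges of $S(x)$ minus the edges of $T$ incident to $x$, together with the edges of $T$ not incident to $x$. If $\deg_T(x) = k$, then $T$ has $k$ edges at $x$ (at most $k$ of which can be star edges, and in fact exactly $k$ of them are star edges since every edge at $x$ is of the form $[x, \cdot]$), so $|\Delta(T, S(x))| = (n-1-k) + (n-1-k) = 2(n-1-k)$, giving $d_{G(P)}(T, S(x)) \ge n-1-\deg_T(x)$. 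Summing the two inequalities for $x = p$ and $x = q$ yields
\[
d_{G(P)}(T, S(p)) + d_{G(P)}(T, S(q)) \;\ge\; 2(n-1) - \deg_T(p) - \deg_T(q).
\]
Since $p \ne q$, the edge $[p,q]$ is counted in both degrees only if it belongs to $T$; in general $\deg_T(p) + \deg_T(q) \le n$ with equality forcing strong structure. Combining with the hypothesis $d_{G(P)}(T,S(p)) + d_{G(P)}(T,S(q)) = n-2$, I get $\deg_T(p) + \deg_T(q) \ge n$, hence $\deg_T(p) + \deg_T(q) = n$ and both of the individual distance inequalities are tight.

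The next step is to see that $\deg_T(p) + \deg_T(q) = n$ together with the fact that $T$ is a tree on $n$ vertices essentially forces $T$ to be a "double star": every vertex other than $p, q$ is a leaf adjacent to exactly one of $p$ or $q$ (and $[p,q] \in T$). Indeed, the sum of degrees over all vertices is $2(n-1)$; if $p$ and $q$ account for $n$ of that (or $n$ with $[p,q]$ double-counted, i.e.\ $n$ as the sum of the two degrees meaning they are incident to $n$ endpoints counted with multiplicity), a short counting argument shows the remaining $n-2$ vertices contribute only $n-2$ to the degree sum, so each is a leaf, and each leaf attaches to $p$ or to $q$; moreover $[p,q]$ must be present to keep the graph connected (unless all leaves hang off a single center, the degenerate cases $T = S(p)$ or $T = S(q)$). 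So $T$ is either a star centered at $p$ or $q$, or a genuine brush whose single internal edge is $[p,q]$ — which is exactly the conclusion. One must still rule out the possibility that the tightness of the distance bounds is consistent with some $T$ that is \emph{combinatorially} a $pq$-brush but whose realization as an SST is somehow excluded; but since a combinatorial $pq$-brush that is a simple (non-crossing) spanning tree is by definition a $pq$-brush, there is nothing further to check here — the simplicity of $T$ is part of the hypothesis $T \in G(P)$.

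The main obstacle I anticipate is justifying the degree-sum inequality $d_{G(P)}(T, S(x)) \ge n-1-\deg_T(x)$ cleanly and then arguing that equality in the \emph{sum} (not just the individual bounds from $\frac12|\Delta|$) forces the double-star structure — in particular handling the bookkeeping of the edge $[p,q]$ and confirming that when $\deg_T(p)+\deg_T(q)=n$ one genuinely cannot have, say, a path of length $3$ with extra leaves distributed in a way that still sums correctly. The resolution is the elementary tree identity $\sum_v \deg_T(v) = 2(n-1)$: writing $\deg_T(p) + \deg_T(q) = n$ and noting each of the other $n-2$ vertices has degree $\ge 1$, we get $n + (n-2) \le 2(n-1)$ with equality, so every other vertex has degree exactly $1$, which pins down the structure completely. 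The converse direction (that a brush does achieve distance sum $n-2$) is not needed for this proposition and is presumably handled in the companion proposition, so I would not address it here.
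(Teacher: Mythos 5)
Your proposal is correct and takes essentially the same route as the paper: both lower-bound each distance by $\tfrac12|\Delta(T,S(x))| = n-1-\deg_T(x)$ and deduce from the hypothesis that every edge of $T$ must be incident to $p$ or $q$, which pins down the brush/star structure. The paper organizes the final step by splitting on whether $[p,q]\in T$ and showing that the number $r$ of edges avoiding both $p$ and $q$ vanishes, whereas you reach the same conclusion via the handshake lemma; the difference is only bookkeeping.
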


\begin{proof}
First, we note that \[d_{G(P)} (S(p),S(q)) \geq \frac12 |\Delta(S(p),S(q))| = n-2,\] and thus,
by the triangle inequality, \[d_{G(P)} (T,S(p)) + d_{G(P)} (T,S(q)) \geq n-2\] for any
$T \in G(P)$.

\medskip

Assume that $T$ satisfies~(\ref{Eq:Brush1}). Let $k,\ell$ be the numbers of edges of $T$ that emanate from $q,p$
(respectively), and let $r$ be the number of edges of $T$ that use neither $p$ nor $q$.
We consider two cases:
\begin{enumerate}
\item \textbf{$[p,q] \not \in T$.} In this case, we have
\begin{align*}
&k+\ell+r=n-1, \qquad  d_{G(P)} (T,S(p)) \geq \frac12 |\Delta(T,S(p))| = k+r, \qquad \mbox{and}
\qquad \\
&d_{G(P)}(T,S(q)) \geq \frac12 |\Delta(T,S(q))| = \ell+r.
\end{align*}
Hence,
\[d_{G(P)} (T,S(p))+ d_{G(P)} (T,S(q)) \geq k+\ell+2r=(n-1)+r>n-2.\]

\item \textbf{$[p,q] \in T$.} In this case,
\begin{align*}
&k+\ell+r=n, \qquad  d_{G(P)} (T,S(p)) \geq \frac12 |\Delta(T,S(p))| = k+r-1, \qquad \mbox{and}
\qquad \\
&D_{G(P)}(T,S(q))| \geq \frac12 |\Delta(T,S(q)) = \ell+r-1.
\end{align*}
Hence,
\[d_{G(P)} (T,S(p))+ d_{G(P)} (T,S(q)) \geq k+\ell+2r-2=n+r-2 \geq n-2,\]
and equality can hold
only if $r=0$, which means that all edges of $T$ emanate either from $p$ or from $q$ and
$[p,q] \in T$, i.e., $T$ is a $pq$-brush, or $T=S(p)$ or $T=S(q)$.
\end{enumerate}
\end{proof}

\begin{proposition}\label{Prop:Brush2}
Let $T \in G(P)$ be a $pq$-brush. Then
\[
d_{G(P)} (T,S(p)) + d_{G(P)} (T,S(q)) = n-2.
\]
\end{proposition}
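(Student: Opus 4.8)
The plan is to establish the upper bound $d_{G(P)}(T,S(p)) + d_{G(P)}(T,S(q)) \leq n-2$, which together with Proposition~\ref{Prop:Brush1} (whose inequality $\geq n-2$ holds for all $T$) forces equality. So the entire content of the proof is exhibiting short enough paths from the $pq$-brush $T$ to each of the two stars $S(p)$ and $S(q)$, with the sum of their lengths at most $n-2$.

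First I would set up notation for a $pq$-brush: write $T = \{[p,q]\} \cup \{[p,a_i] : 1 \leq i \leq \ell-1\} \cup \{[q,b_j] : 1 \leq j \leq k-1\}$, where the $a_i$ are the leaves adjacent to $p$, the $b_j$ are the leaves adjacent to $q$, and $(\ell-1) + (k-1) + 1 = n-1$, i.e. $k + \ell = n$. The goal is then to find a path of length $\ell-1$ from $T$ to $S(p)$ and a path of length $k-1$ from $T$ to $S(q)$, since $(\ell-1)+(k-1) = n-2$. To reach $S(q)$: I want to successively ``move'' each leaf $a_i$ from $p$ to $q$, i.e. replace $[p,a_i]$ by $[q,a_i]$ one at a time; after all $\ell-1$ such moves we have removed edge $[p,a_i]$ and the $p$-component shrinks, eventually leaving $S(q)$ (noting that the remaining edge $[p,q]$ becomes a leaf edge of $S(q)$). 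Each single move is a legal $G(P)$-step provided the intermediate graphs are simple spanning trees — removing $[p,a_i]$ and adding $[q,a_i]$ keeps the graph a spanning tree, and the only thing to check is non-crossing. Symmetrically, to reach $S(p)$ we move each $b_j$ from $q$ to $p$ in $k-1$ steps.

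The main obstacle is precisely the planarity (non-crossing) condition at each intermediate step: when I replace $[p,a_i]$ by $[q,a_i]$, the new segment $[q,a_i]$ might cross some segment $[p,a_{i'}]$ or $[q,b_j]$ that is still present. The fix is to choose the order of the moves carefully. I would order the leaves $a_1, a_2, \ldots, a_{\ell-1}$ by angle around the segment $[p,q]$ — say, sorted by the angle that $[p,a_i]$ makes, sweeping from the side of $q$ outward — and then move them in the order that keeps the star $S(q)$ being built incrementally on one side. Concretely, since $T$ is simple, all the $[q,b_j]$ lie in the half-planes not obstructing, and one can show by a convexity/sweeping argument that moving the $a_i$ in the correct angular order (outermost first, or innermost first, depending on geometry) guarantees that $[q,a_i]$ avoids all still-present edges: the edges of $T$ form a simple tree, so the region ``visible'' from $q$ has the right structure. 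I expect this to be the one genuinely geometric lemma in the proof; everything else is bookkeeping with the $\frac12|\Delta|$ lower bound from the previous propositions.

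Once the two paths are constructed, I conclude: $d_{G(P)}(T,S(p)) \leq \ell-1$ and $d_{G(P)}(T,S(q)) \leq k-1$, hence $d_{G(P)}(T,S(p)) + d_{G(P)}(T,S(q)) \leq k+\ell-2 = n-2$. Combined with the reverse inequality from Proposition~\ref{Prop:Brush1}, equality holds, which is exactly the claim. One should double-check the degenerate sub-cases where $\ell = 1$ or $k = 1$ (i.e. $T$ is already a star), but the problem statement of this proposition assumes $T$ is a genuine $pq$-brush, so both $k, \ell \geq 2$ and the argument is uniform; alternatively, if one wants to include the star cases the same path construction still works with one of the two path-lengths being zero.
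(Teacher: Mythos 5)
Your strategy is exactly the paper's: the lower bound $d_{G(P)}(T,S(p))+d_{G(P)}(T,S(q))\geq n-2$ is already available from Proposition~\ref{Prop:Brush1}, so the whole proof reduces to transferring the leaves attached to $p$ over to $q$ one swap at a time (and symmetrically for the other star), and the only real content is that \emph{some} order of transfers keeps every intermediate tree crossing-free. This is precisely what the paper isolates as Lemma~\ref{Lemma:Brush1} and then iterates to build the path $S(q)=T_0,\dots,T_k=T,\dots,T_{n-2}=S(p)$.

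That said, the one genuinely geometric step is exactly where your write-up has a gap: you assert that transferring the $a_i$ ``in the correct angular order (outermost first, or innermost first, depending on geometry)'' works, without committing to an order or proving the claim --- and the hedge suggests it was not actually verified. The resolution is short and deterministic: at each step move the leaf $x$ still attached to $p$ for which the angle $\angle qpx$ is minimal (``innermost first''). Every point of the new segment $[q,x]$ lies in the convex wedge at $p$ bounded by the rays $pq$ and $px$, so if $[q,x]$ crossed a still-present edge $[p,y]$ at an interior point $z$, the ray from $p$ through $z$ (hence through $y$) would lie strictly inside that wedge, giving $\angle qpy<\angle qpx$ and contradicting minimality. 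All other edges of the intermediate tree emanate from $q$ (including the already-transferred ones) or equal $[p,q]$, hence share the endpoint $q$ with $[q,x]$ and cannot cross it. With this one observation supplied, your argument is complete and coincides with the paper's proof.
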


In order to prove Proposition~\ref{Prop:Brush2}, we need a lemma.
\begin{definition}
Let $T$ be a $pq$-brush (or $T=S(p)$ or $T=S(q)$). We say that $T$ is \emph{of type $(k,\ell)$}
if $\mathrm{val}(T,p)=k+1$ and $\mathrm{val}(T,q)=\ell+1$ (i.e., the numbers of edges of $T$
that emanate from $p,q$ are $k+1,\ell+1$, respectively).
\end{definition}

\begin{lemma}\label{Lemma:Brush1}
If $T$ is a $pq$-brush (or a star) of type $(k,\ell)$, $k<n-2$, then it is adjacent in $G(P)$ to some $pq$-brush
(or star) $T'$ of type $(k+1,\ell-1)$. (By symmetry, if $\ell<n-2$ then $T$ is adjacent in $G(P)$ to
some $pq$-brush (or star) $T''$ of type $(k-1,\ell+1)$.)
\end{lemma}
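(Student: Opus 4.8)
The plan is to exhibit the adjacent brush $T'$ explicitly by a single edge swap that moves one leaf from the ``$q$-side'' to the ``$p$-side''. Write $T$ as a $pq$-brush of type $(k,\ell)$: it consists of the internal edge $[p,q]$, a set of $k$ leaves attached to $p$, and a set of $\ell$ leaves attached to $q$ (with $k+\ell = n-2$, hence the hypothesis $k < n-2$ forces $\ell \geq 1$, so a $q$-leaf exists to move). I want to pick a $q$-leaf $w$ and replace the edge $[q,w]$ by an edge $[p,w]$, obtaining $T' = (T \setminus \{[q,w]\}) \cup \{[p,w]\}$. Then $T'$ differs from $T$ in exactly two edges, so $[T,T'] \in G(P)$ provided $T'$ is an SST; and by construction $\mathrm{val}(T',p) = k+2$, $\mathrm{val}(T',q) = \ell$, so $T'$ is of type $(k+1,\ell-1)$ — and it is again a $pq$-brush (or a star, if $\ell-1 = 0$, in which case it is $S(p)$) since its only possible internal edge is still $[p,q]$ and all other edges are leaf edges at $p$ or at $q$.

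\textbf{Key steps.} The crux is choosing $w$ so that $T'$ is non-crossing. First I would observe that $T'$ is automatically a tree on $P$ (removing a leaf edge and reattaching that leaf elsewhere preserves the spanning-tree property), so the only issue is planarity. The edge $[q,w]$ and the edges at $p$ are all still present, so the only new potential crossings are between the new edge $[p,w]$ and the old edges of $T$; moreover $[p,w]$ shares the endpoint $p$ with all $p$-edges, so those cannot be crossed in an interior point, and it shares $w$ with nothing else. Thus I only need: $[p,w]$ crosses none of the $\ell - 1$ remaining edges $[q,w']$ (for the other $q$-leaves $w'$) and does not cross $[p,q]$ (which it can't, sharing $p$). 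To guarantee this, I would take $w$ to be the $q$-leaf that is ``extreme'' as seen from $p$ — concretely, order the points $P \setminus \{p\}$ angularly around $p$; among the $q$-leaves and $q$ itself, let $w$ be the first or last in this angular order (the one for which the ray from $p$ through $w$ is not separated from the rest of the $q$-subtree). Then the triangle-like region swept from $p$ over $\{q\} \cup \{q\text{-leaves}\}$ has $[p,w]$ on its boundary, so $[p,w]$ stays outside the ``fan'' spanned by $q$ and its leaves and in particular crosses no edge $[q,w']$.

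\textbf{Main obstacle.} The delicate point is verifying that such an extreme choice of $w$ really makes $[p,w]$ avoid all the edges $[q,w']$; one must be careful because $q$ itself lies between $p$ and its leaves, and the leaves of $q$ could in principle ``wrap around.'' I would handle this by a convexity argument: consider the point set $Q = \{q\} \cup \{q\text{-leaves}\} \cup \{p\}$ and note that $w$ should be chosen as a vertex of $\mathrm{conv}(Q)$ adjacent to $p$ on the hull (such a hull vertex which is a $q$-leaf exists, since the $q$-leaves together with $q$ lie in a half-plane determined by... — more carefully, at least one of the two hull-neighbors of $p$ in $\mathrm{conv}(Q)$ is a $q$-leaf, unless $\ell = 0$, because $p$'s two hull neighbors cannot both be $q$ as $Q$ has $\geq 3$ points). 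For that $w$, the segment $[p,w]$ is a hull edge of $\mathrm{conv}(Q)$, hence crosses no segment with both endpoints in $Q$, in particular no $[q,w']$. This disposes of the planarity check, and combined with the first two paragraphs completes the proof; the symmetric statement for type $(k-1,\ell+1)$ follows by interchanging the roles of $p$ and $q$.
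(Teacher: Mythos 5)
Your overall strategy is the same as the paper's: produce $T'$ by a single edge swap, detaching one extremal $q$-leaf $w$ and reattaching it to $p$, so that the only thing to check is that the new edge $[p,w]$ crosses none of the remaining edges $[q,w']$. The first two paragraphs of your write-up are fine. The gap is in the choice of $w$: you take $w$ to be a neighbor of $p$ on the boundary of $\mathrm{conv}(Q)$, where $Q=\{p,q\}\cup\{q\text{-leaves}\}$, and this presupposes that $p$ is a \emph{vertex} of $\mathrm{conv}(Q)$. That can fail. The cleanest instance is $T=S(q)$ (type $(0,n-2)$, explicitly covered by the lemma and in fact the starting point of the path built in Proposition~\ref{Prop:Brush2}): there $Q=P$, and nothing prevents $p$ from being an interior point of $\mathrm{conv}(P)$, in which case $p$ has no hull neighbors and your construction does not get off the ground. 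Your alternative phrasing, ``the first or last $q$-leaf in the angular order around $p$,'' has the same defect: when $Q\setminus\{p\}$ surrounds $p$, the angular order around $p$ is genuinely cyclic and has no extreme element. So the extremality must not be measured from $p$.

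The paper's fix is to measure the angle at $q$, the vertex from which the potentially crossed edges emanate: pick a side of the line $\ell(p,q)$ containing at least one $q$-leaf, and among the $q$-leaves on that side let $w$ be the one minimizing the angle $\angle pqw$. Then the segment $[p,w]$, being the third side of the triangle $pqw$, lies (as seen from $q$) entirely within the closed angular sector between the rays $qp$ and $qw$; every other edge $[q,w']$ either points into that side at a strictly larger angle or into the opposite half-plane, so it misses $[p,w]$, while all remaining edges of $T$ share the endpoint $p$ with $[p,w]$. With that substitution your argument goes through verbatim; as it stands, the convex-position step is a genuine hole.
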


\begin{proof}[Proof of the Lemma]
Assume w.l.o.g. that $[p,q]$ is placed horizontally, and consider the half-plane above it.
Let $[p,x]$ be an edge such that the angle $\alpha$ between $[p,q]$ and $[p,x]$ is minimal amongst
all edges of $T$ that emanate from $p$ (see Figure~\ref{fig16p10}). Let
$T' = (T \setminus \{[p,x]\}) \cup \{[q,x]\}$. Due to the minimality of the angle $\alpha$, $[q,x]$
does not cross any of the edges of $T$ that emanate from $p$. Thus, $T'$ is a $pq$-brush of
type $(k+1,\ell-1)$ (or $T'=S(q)$, if $\ell=1$), and $[T,T'] \in G(P)$ since $|\Delta(T,T')|=2$.
\end{proof}

\begin{proof}[Proof of Proposition~\ref{Prop:Brush2}]
Let $T \in G(P)$ be a $pq$-brush. Assume w.l.o.g. that $T$ is of type $(k,\ell)$. Repeated use
of Lemma~\ref{Lemma:Brush1} enables us to construct a path $\langle T_0,T_1,\ldots,T_{n-2} \rangle$
in $G(P)$ such that $T_0=S(q)$, $T_{k}=T$, and $T_{n-2}=S(p)$. This implies $d_{G(P)} (T,S(p)) \leq \ell$
and $d_{G(P)} (T,S(q)) \leq k$, hence, $d_{G(P)} (T,S(p)) + d_{G(P)} (T,S(q)) \leq \ell+k=n-2$. Since
we have shown above that $d_{G(P)} (T,S(p)) + d_{G(P)} (T,S(q)) \geq n-2$ for any $T$, this completes
the proof.
\end{proof}

\begin{figure}[tb]
\begin{center}
\scalebox{0.8}{
\includegraphics{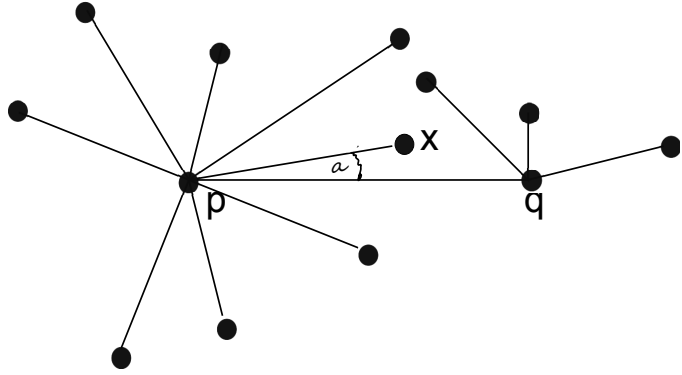}
} \caption{An illustration to the proof of Lemma~\ref{Lemma:Brush1}.}
\label{fig16p10}
\end{center}
\end{figure}

\section{Identification of the Geometric Structure of $K(P)$}
\label{sec:geom-structure}

In this section we achieve a complete reconstruction of the geometric structure of $K(P)$,
based on the identification of stars and brushes presented in Section~\ref{sec:stars-and-brushes}.
Most of the effort is devoted to obtaining a complete identification of the brushes, in the sense
that given a $pq$-brush $T$ and a vertex $x \neq p,q$, we determine whether $[x,p] \in T$ or
$[x,q] \in T$. This step is presented in Section~\ref{sec:sub:brushes-further}. The finalization
of the proof of Theorem~\ref{Thm:Main}, presented in Section~\ref{sec:sub:completion}, is easy.

\subsection{Further Information on Brushes in $G(P)$}
\label{sec:sub:brushes-further}

So far, we know which vertices of $G(P)$ are brushes. Furthermore, if we identify
the points of $P$ with the stars in $G(P)$ (an identification that is determined only up to an
automorphism of $K(P)$), we can say for each brush $T$, what are the vertices $p,q$ that
are its ``centers'', and how many edges of $T$ emanate from each of the central vertices $p,q$.

Our goal now is to gain full information on the brushes. Namely, for a $pq$-brush $T$
and a vertex $x \neq p,q$, we would like to determine whether $[p,x] \in T$ or
$[q,x] \in T$.

As an intermediate step, we would like to determine, for given $x,y \neq p,q$, whether both
$x$ and $y$ are connected in $T$ to the same vertex (either $p$ or $q$), or one of
them is connected to $p$ and the other to $q$.

\begin{proposition}\label{Prop:Brush3}
Let $T$ be a $pq$-brush, and let $x,y \in P$ be different from each other and from $p$ and $q$. 
The leaf edges of
$T$ whose endpoints are $x$ and $y$ emanate from the same internal vertex of $T$ if
and only if for any $xy$-brush $S$, we have $d_{G(P)} (T,S) \geq n-2$.
\end{proposition}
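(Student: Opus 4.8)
The plan is to derive both implications from the elementary bound $d_{G(P)}(S,S') \ge \tfrac12|\Delta(S,S')|$, which in the present situation is tight. First I record the arithmetic. Let $S$ be any $xy$-brush. Every edge of $T$ meets $\{p,q\}$ and every edge of $S$ meets $\{x,y\}$, so every edge of $T\cap S$ lies in $\{[p,x],[p,y],[q,x],[q,y]\}$; moreover, since $T$ is a $pq$-brush it contains exactly one of $[p,x],[q,x]$ (the leaf edge at $x$) and exactly one of $[p,y],[q,y]$ (the leaf edge at $y$), hence $|T\cap S|\le 2$. If the leaf edges of $T$ at $x$ and $y$ emanate from the same internal vertex, say both from $p$, then $[p,x],[p,y]\in T$ while $[q,x],[q,y]\notin T$, so $T\cap S\subseteq\{[p,x],[p,y]\}$; but $S$, being an $xy$-brush, contains only one of $[p,x],[p,y]$, so $|T\cap S|\le 1$. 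Since $|T|=|S|=n-1$, we get $\tfrac12|\Delta(T,S)| = (n-1)-|T\cap S|\ge n-2$, hence $d_{G(P)}(T,S)\ge n-2$ for every $xy$-brush $S$. This is the ``if'' direction.

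For the converse, suppose the leaf edges at $x$ and $y$ emanate from different internal vertices; without loss of generality $[p,x]\in T$ and $[q,y]\in T$. It suffices to exhibit a single $xy$-brush $S$ with $d_{G(P)}(T,S)\le n-3$; by the lower bound it will then equal $n-3$ and we must have $T\cap S=\{[p,x],[q,y]\}$. I will construct an explicit path $T=T_0,T_1,\dots,T_{n-3}=S$ in $G(P)$, each step a single edge swap, that ``pivots'' the brush from the internal edge $[p,q]$ onto the internal edge $[x,y]$. Writing $A,B$ for the sets of leaves of $T$ at $p$ and at $q$ (so $x\in A$, $y\in B$, $|A|+|B|=n-2$), the path consists of $|A|-1$ moves peeling the leaves of $A\setminus\{x\}$ off $p$ and reattaching each of them to $x$ or to $y$, then $|B|-1$ moves peeling $B\setminus\{y\}$ off $q$ onto $x$ or $y$, and one move performing the swap $[p,q]\mapsto[x,y]$ at a suitable moment --- a total of $(|A|-1)+(|B|-1)+1 = n-3$ moves. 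The ordering of the moves is governed by angular sweeps in the spirit of Lemma~\ref{Lemma:Brush1}: a leaf is always reattached to an ``arm'' to which it is angularly adjacent around its current centre, so that the new edge crosses none of the current edges, and the swap $[p,q]\mapsto[x,y]$ is inserted at the first stage where $[x,y]$ is crossing-free (such a stage exists, since once every remaining edge shares an endpoint with $x$ or $y$ the swap is automatically legal). When $n=4$ the path is just this single swap, which is legitimate because $[x,y]$ shares an endpoint with each of $[p,x],[q,y]$. One may also proceed by induction on $n$, deleting a leaf $z\notin\{x,y\}$, applying the statement to the $(n-1)$-point brush $T-z$ to obtain a length-$(n-4)$ path to an $xy$-brush on $P\setminus\{z\}$, and reinserting $z$ at the cost of one extra move.

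The substantive point --- and the main obstacle --- is to show that this scheme can actually be realised inside $G(P)$, i.e. that every intermediate graph $T_i$ is non-crossing; equivalently, that some $xy$-brush containing both $[p,x]$ and $[q,y]$ is reachable from $T$ by a shortest path. Combinatorially the sequence of swaps is essentially forced (the symmetric difference has size $2(n-3)$, so every step must be productive), but geometrically one must check at each stage that the newly introduced edge avoids all edges currently present, and that the pivot edge $[x,y]$ can be inserted somewhere along the way without creating a crossing. This is handled by a case analysis on the cyclic orders of the leaves and of $p,q,x,y$ about one another, in the style of (but heavier than) the angular argument in Lemma~\ref{Lemma:Brush1} and the triangulation arguments behind Theorem~\ref{Thm:D_T}. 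Once the path is in place, combining $d_{G(P)}(T,S)\le n-3$ with $d_{G(P)}(T,S)\ge (n-1)-|T\cap S|\ge n-3$ yields equality and completes the proof.
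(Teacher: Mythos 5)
Your first direction is correct and matches the paper: every edge of an $xy$-brush $S$ meets $\{x,y\}$, so $T\cap S\subseteq\{[p,x],[p,y],[q,x],[q,y]\}\cap T$, and when both leaf edges of $T$ at $x,y$ hang from the same centre this intersection has at most one element, giving $d_{G(P)}(T,S)\ge\tfrac12|\Delta(T,S)|\ge n-2$. You have also correctly identified the shape of the converse: one must exhibit an $xy$-brush $S$ containing $[p,x]$ and $[q,y]$ together with an explicit walk of length $n-3$ from $T$ to $S$ in $G(P)$, which by the lower bound is then a shortest path.

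However, the converse is where essentially all of the content of this proposition lives, and your proposal leaves it open: you yourself label the verification that every intermediate tree is non-crossing as ``the main obstacle'' and then defer it to an unspecified ``case analysis on the cyclic orders.'' This is a genuine gap, not a routine check. The angular-sweep idea you borrow from Lemma~\ref{Lemma:Brush1} does not transfer: there the new edge $[q,w]$ emanates from the \emph{other centre} $q$ of the same brush, and minimality of the angle at $p$ relative to the tree edge $[p,q]$ is what kills crossings; here the new edges emanate from the \emph{leaves} $x$ and $y$, which sit at arbitrary positions relative to the fan of edges at $p$ and $q$, so angular adjacency around the current centre says nothing about whether $[x,w]$ crosses the remaining edges. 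The paper resolves exactly this point with a separate visibility lemma (Lemma~\ref{Lemma:Sees}): at each step one applies it to the subgraph of current edges joining $\{p,q\}$ to the not-yet-moved leaves inside a fixed convex region, concluding that $x$ (or $y$) \emph{sees} some such leaf $w$, so that $[x,w]$ can be added without crossings; the regions themselves are chosen via a supporting line through one of $[p,x],[q,y]$ (Case 1) or via $\ell(p,q)$ (Case 2), and the order of the phases is what guarantees that the pivot $[p,q]\mapsto[x,y]$ and the later reattachments across $\ell(p,q)$ are legal. Your alternative inductive sketch has the same difficulty: after deleting a leaf $z$ one must keep $z$ attached throughout the lifted path, and its edge to $p$ or $q$ can obstruct the intermediate trees. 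Until the crossing-freeness of the intermediate trees is actually established, the proof of the ``only if'' direction is incomplete.
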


One direction of the proposition is immediate. If the leaf edges emanate from the same
vertex, e.g., $[p,x],[p,y]$, then at most one (and actually, exactly one) of these edges can belong
to an $xy$-brush
(as $([x,y],[y,p],[p,x])$ form a cycle). Since every edge of an $xy$-brush $S$ emanates from
either $x$ or $y$, we have $\frac{1}{2}|\Delta(S,T)| \geq n-2$ (as these trees have exactly
one edge in common). Hence,
\[
d_{G(P)} (T,S) \geq \frac{1}{2}|\Delta(S,T)| = n-2
\]
for any $xy$-brush $S$.

On the other hand, if the leaf edges emanate from different vertices, e.g., $[p,x],[q,y]$,
it is possible that an $xy$-brush $S$ include both these edges, and then
$\frac12 |\Delta(S,T)|=n-3$. We will construct an $xy$-brush that satisfies this condition,
and furthermore, satisfies the stronger condition $d_{G(P)}(S,T)=n-3$. Before we show this
construction, we need a few preparations.
\begin{definition}
Let $G$ be a geometric graph, and let $O$ be a point in the plane. We say that $O$ \emph{sees} a
point $P$ if the open segment $(O,P)$ does not meet any edge or vertex of $G$. We say that
$O$ sees an edge $e \in E(G)$ if it sees every point $X \in e$, including the endpoints.
\end{definition}

\begin{lemma}\label{Lemma:Sees}
Let $G=(V,E)$ be a crossing-free geometric graph, with no isolated vertices.
Suppose $V$ is a disjoint union $V=V_0 \cup W$, where $|V_0|=2$ (say, $V_0=\{p,q\}$), and
each edge of $G$ connects a vertex of $V_0$ with a vertex of $W$. Suppose $O \in \mathbb{R}^2
\setminus \bigcup \{\mathrm{aff}(e):e \in E(G)\}$. Then $O$ sees some vertex $w \in W$.
\end{lemma}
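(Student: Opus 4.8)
The plan is to analyze the structure of $G$ geometrically and find a vertex of $W$ that $O$ can see. Since every edge of $G$ joins $\{p,q\}$ to $W$, the graph $G$ is a subgraph of the complete bipartite graph on $(\{p,q\},W)$; as it is crossing-free with no isolated vertices, each vertex $w\in W$ has degree $1$ or $2$, with degree $2$ meaning $[p,w]$ and $[q,w]$ are both present. Thus $G$ consists of the segments $[p,w]$ for $w$ in some set $W_p$, the segments $[q,w]$ for $w$ in some set $W_q$, and possibly the edge pattern around $p$ and $q$; the union $\bigcup E(G)$ is a closed set that, together with the finitely many full lines $\mathrm{aff}(e)$, has complement a finite union of open cells. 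The point $O$ lies off all these lines, so it lies in the interior of one such cell.

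The natural approach is to shoot a ray from $O$. First I would consider the ray $\rho$ from $O$ in a generic direction (avoiding the finitely many directions of the segments and of the lines $O w$ for $w\in W$, as well as directions hitting $p$ or $q$). Walk along $\rho$ starting at $O$. If $\rho$ never meets any edge of $G$, then $\rho$ eventually leaves the convex hull of $V$ without obstruction, but that alone does not directly hand us a visible vertex — so instead I would look at the \emph{first} edge of $G$ that $\rho$ crosses (if any). Say $\rho$ first crosses an edge $e_0=[p,w_0]$ at an interior point $z$. The open segment $(O,z)$ meets no edge or vertex of $G$. Now rotate the viewing direction: consider the edge $e_0$ as seen from $O$; $O$ sees a whole subsegment of $e_0$ (the part of $e_0$ "first hit" in a range of directions), and the endpoints of the maximal such visible subsegment are either endpoints of $e_0$ or points where some other edge of $G$ blocks the view. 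Pushing this, among all edges of $G$, there is one, $e_1$, a (sub)segment of which is entirely visible from $O$ — concretely, order the edges by angular interval as seen from $O$ and take an edge contributing a boundary arc of the visibility region of $O$ restricted to $\bigcup E(G)$.

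More cleanly, here is the argument I would actually write: let $e=[p,w]$ (or $[q,w]$) be an edge of $G$ such that $O$ sees at least one point of $e$; such an edge exists because $G$ has no isolated vertices, hence $E(G)\neq\emptyset$, and the boundary of the (nonempty, since $O\notin\bigcup\mathrm{aff}(e)$) visibility region of $O$ meets $\bigcup E(G)$. Among all points of $\bigcup E(G)$ visible from $O$, pick one, $X$, minimizing the angle that $[O,X]$ makes as we sweep — more precisely, among the edges with a visible point, choose $e=[c,w]$ ($c\in\{p,q\}$) and a visible point $X\in e$ such that no edge of $G$ separates $X$ from the endpoint $w$ within the triangle-like region swept — then I claim $O$ sees $w$. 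Indeed, suppose not: then the open segment $(O,w)$ meets some edge $e'$ of $G$, or meets a vertex. It cannot meet a vertex $w'\in W$ before $w$, nor $p$ nor $q$ (choosing $X$ and hence the direction generically handles the vertex cases, and a blocking vertex could be replaced by the blocking edge incident to it). So $(O,w)$ meets an edge $e'=[c',w']$ at an interior point. But $e$ and $e'$ do not cross (crossing-free), and both have an endpoint in $\{p,q\}$; a short case analysis on whether $c=c'$ shows that $e'$ then also blocks the visible point $X$ of $e$, or that we could have chosen $e'$ in place of $e$ with its own endpoint $w'$ "closer," contradicting the extremal choice. Iterating (or taking the extremal edge outright) terminates since $W$ is finite, and we conclude $O$ sees some $w\in W$.

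The main obstacle is making the extremal/angular-sweep selection precise enough that the "blocking edge also blocks $X$" step is genuinely a contradiction rather than a circular claim; the clean way is to define, for the visible edge $e=[c,w]$, the sub-arc of directions in which $w$ itself is the closest point of $\bigcup E(G)$, and to argue this arc is nonempty by a minimality choice among edges (e.g. choose among visible edges one whose visible endpoint $w$ has $[O,w]$ of minimal length, or minimal angular deviation from a fixed reference). I expect the bipartite structure (every edge meets $\{p,q\}$) to be exactly what rules out the bad configurations, since two non-crossing segments sharing an endpoint, or with endpoints in a common $2$-set, cannot "wrap around" $O$.
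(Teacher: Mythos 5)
Your proposal has the right opening moves --- shoot a ray from $O$, take the first point $X$ where it meets an edge $e=[c,w]$ (so $O$ sees $X$), and rotate the ray toward the endpoint $w$, using crossing-freeness to control what can obstruct the view --- and this is exactly how the paper's proof begins. But the argument is not completed, and the step you yourself flag as ``the main obstacle'' is where the real content lies. When you rotate from $X$ toward $w$, the first obstruction cannot be an edge arriving ``from the side'': an edge meeting the open segment $(O,w)$ would have to cross $(O,X)$ (impossible, $X$ is the first hit), cross $(X,w)\subset e$ (impossible, $G$ is crossing-free), or have an endpoint in the swept triangle $OXw$. So the only possible obstruction is a \emph{vertex} of $G$ in that triangle, and the dangerous case is that this vertex is the other element of $V_0$, say $q$ (it cannot be $p=c$, since the rotation moves strictly away from the direction $Op$). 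Your remark that ``a blocking vertex could be replaced by the blocking edge incident to it'' does not dispose of this case: the edges incident to $q$ fan out from $q$ in arbitrary directions, none of which need have a visible point inside the region you have swept so far, so there is no single extremal choice of edge made at the outset that is shown to avoid landing on $q$.

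The paper closes this gap with a second rotation stage that your sketch is missing. Having established that $O$ sees $q$, it picks the edge $[q,w']$ minimizing $\angle Oqw'$ --- this minimality is what prevents another edge at $q$ from cutting into the triangle $Oqw'$ and occluding $w'$ without contributing a vertex to the triangle --- and rotates again from the direction $Oq$ toward $w'$. A blocking vertex $v'$ in this second sweep satisfies $0<\angle v'Oq<\angle qOw'<\pi$, so $v'\neq q$, and the \emph{cumulative} angle satisfies $0<\angle v'Op=\angle v'Oq+\angle qOp<\angle w'Oq+\angle wOp<2\pi$, so $v'\neq p$; hence $v'\in W$ and $O$ sees it. This cumulative-angle bound is precisely the ``cannot wrap around $O$'' fact you conjecture the bipartite structure should provide, but your single extremal selection (minimal length of $[O,w]$, or minimal angular deviation from an unspecified reference) is never shown to yield a vertex of $W$ rather than of $V_0$; without the two-stage structure the argument does not terminate correctly.
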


We note that a similar lemma was proved in~\cite{Hernando}. The assumption on $O$
in~\cite{Hernando} is $O \not \in \mathrm{conv}(V(G))$, and the assertion is the same as
in our lemma.

\begin{proof}
Draw a ray $R$ that emanates from $O$, crosses some edge of $G$, and does not meet any vertex
of $G$. (It is clear that such rays exist.) Denote by $C$ the first crossing point of $R$
with an edge of $G$. Then $C$ is an interior point of an edge, say $[p,w]$, of $G$, and $O$ sees
$C$. Now rotate $R$ around $O$ towards $w$, until it hits $w$. If the triangle $\triangle OCw$ does not
contain any vertex of $G$, except $w$, then $O$ sees $w$. Otherwise, there is a first position
$R'$ of the rotated ray that meets $V$. Let $v$ be the point of $R' \cap V$ closest to $O$. Then
$O$ sees $v$. If $v \in W$, we are done. Assume, therefore, that $v \in V_0$. Clearly, $v \neq p$
since $0< \angle vOp < \angle wOp < \pi$. Hence, $v=q$.

Among the edges that emanate from $q$, let $[q,w']$ be the edge such that $\angle Oqw'$
is minimal. As before, rotate $R'$ around $O$ towards $w'$, until it hits $w'$. If the triangle
$\triangle Oqw'$ does not contain any vertex of $G$, except $q$ and $w'$, then $O$ sees $w'$. Otherwise,
there is a first position $R''$ of the rotated ray that meets $V$. Let $v'$ be the point of
$R'' \cap V$ closest to $O$. Then $O$ sees $v'$. Now, we observe that $v' \not \in V_0$.
Indeed, $v' \neq q$ since $0< \angle v'Oq < \angle qOw' < \pi$, and $v' \neq p$ since
$0 < \angle v'Op = \angle v'Oq + \angle qOp < \angle w'Oq + \angle wOp < 2\pi$. Therefore,
$v' \in W$, which completes the proof.
\end{proof}

Now we are ready to prove Proposition~\ref{Prop:Brush3}.
\begin{proof}[Proof of Proposition~\ref{Prop:Brush3}]
We already proved above that if the two leaf edges of $T$ whose endpoints are $x,y$
emanate from the same vertex, then for any $xy$-brush $S$, $d_{G(P)} (T,S) \geq n-2$.
Assume now that these leaf edges emanate from different vertices. W.l.o.g., these
edges are $[p,x],[q,y]$. We consider two cases, according to the placement
of $p,q,x,y$ in the plane. In each case, we show that we can pass from $T$ to
a suitable $xy$-brush $S$ in $n-3$ steps, where in each step we remove
one edge and add another edge, while maintaining the simplicity. This will show that
$d_{G(P)}(T,S) = n-3$, and thus complete the proof of the proposition. Note that in
all the steps of the path connecting $T$ to $S$, the edges $[p,x],[q,y]$ remain
untouched.

\medskip

\noindent \textbf{Case 1:  $x,y$ are on the same side of $\ell(p,q)$.}

In this case, at least one of the edges $[p,x],[q,y]$ is included in a line that
supports the set $\{p,x,q,y\}$ (which means that all points in the set are on the
same side of the line). We assume w.l.o.g. that $[p,x]$ has this property.

\begin{figure}[tb]
\begin{center}
\scalebox{0.8}{
\includegraphics{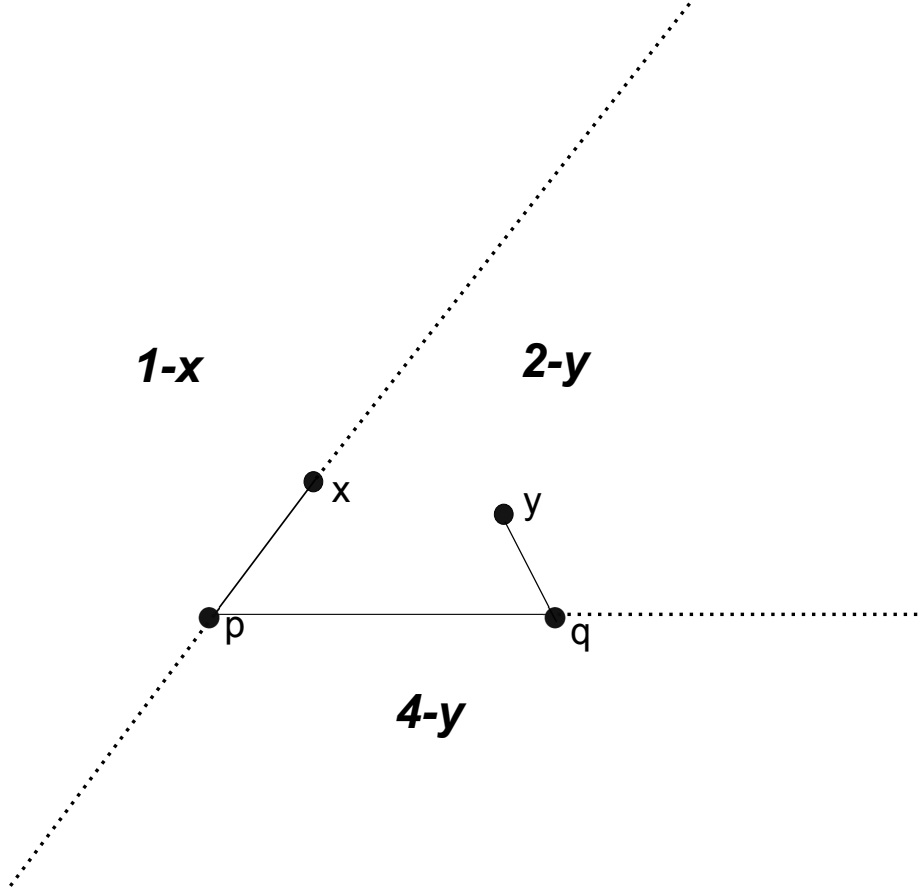}
} \caption{An illustration to the proof of Proposition~\ref{Prop:Brush3}: Case 1.
The regions are numbered by the order of their consideration,
where the missing number 3 corresponds to Phase~3 in which $[p,q]$ is replaced by
$[x,y]$. The notation $(i) - (v)$ where $i \in \{1,2,4\}$ and $v \in \{x,y\}$ means that
we are going to connect all points in Region~$i$ to $v$.}
\label{fig18p12new}
\end{center}
\end{figure}

The passage from $T$ to an appropriate $S$ is performed by a 4-phase procedure,
illustrated in Figure~\ref{fig18p12new}. In each phase (except for phase~3 that will be
described below), we consider one of the regions of the plane denoted in the figure:
$1,2,4$, and deal with all points of $P$ that belong to that region.
\begin{enumerate}
\item \textbf{Region 1 (Reg1).} This region is the open half-plane to the left of
the line $\ell(p,x)$. Assume that $|P \cap Reg1|=k_1$. We are going to perform $k_1$
steps: in each step, we take one of these points, remove the edge that connects it to
either $p$ or $q$, and add an edge that connects it to $x$. Of course, we must maintain
the simplicity during all steps, and this is achieved using Lemma~\ref{Lemma:Sees}.

\medskip

Let $G_0$ be the geometric graph whose edges are all edges of $T$ of the form $[p,w]$ or
$[q,w]$, where $w$ lies in $Reg1$. The graph $G_0$ and the point $O=x$ satisfy the
assumptions of Lemma~\ref{Lemma:Sees}, and thus, by the Lemma, $x$ sees one of its vertices
$w \in Reg1$, say $w_1$. Assume, for example, that $[q,w_1] \in E(G_0)$. Define
$T_1 = T \setminus \{[q,w_1]\} \cup \{[x,w_1]\}$. Since $x$ sees $w_1$,
the edge $[x,w_1]$ does not cross any other edge of $T$. Thus, $T_1$ is an SST and
$|T \triangle T_1|=2$, which implies that $[T,T_1] \in G(P)$.

Now, we repeat the first step with the SST $T_1$ in place of $T$. That is, we define $G_1$
whose edges are all edges of $T$ of the form $[p,w]$ or $[q,w]$ where $w$ lies in $Reg1$,
except for $[q,w_1]$. As before, we apply Lemma~\ref{Lemma:Sees} with $G_1$ and $O=x$ and
obtain a vertex $w_2$ that is seen from $x$. Then, we define $T_2$ by removing from $T_1$
the edge that connects $w_2$ to either $p$ or $q$ and adding the edge $[x,w_2]$. Note that
the edge $[x,w_1]$ that was not included in $G_2$ cannot cross $[x,w_2]$, as they both
emanate from $x$.

By continuing in the same fashion, we obtain a sequence $T_0,T_1,\ldots,T_{k_1}$ such that
$T_0=T$, $[T_i,T_{i+1}] \in G(P)$ for all $i$, and in $T_{k_1}$, all points in $P \cap Reg1$
are connected to $x$.

It should be noted that the parts of $T_i$ that are not included in the auxiliary graph $G_i$,
i.e., the edge $[p,q]$ and the edges $[p,w],[q,w]$, $w \in \mathbb{R}^2 \setminus Reg1$, are
all disjoint from the convex set $Reg1$, and thus cannot cross the new edge $[x,w_{i+1}]$ (as
$x \in \mathrm{bdry}(Reg1)$).

\item \textbf{Region 2 (Reg2).} This region contains all points that lie above $\ell(p,q)$ and on the
right side of $\ell(p,x)$. Assume that $|P \cap Reg2|=k_2$.
We start with $T_{k_1}$ and perform $k_2$ steps: in each step, we consider one of these
points, remove the edge that connects it to either $p$ or $q$, and add an edge that connects it
to $y$. As before, the simplicity is maintained during all steps, by using Lemma~\ref{Lemma:Sees}.

Let $G_{k_1}$ be the geometric graph whose edges are all edges of $T_{k_1}$ of the form $[p,w]$ or
$[q,w]$, where $w$ lies in $Reg2$. The graph $G_{k_1}$ and the point $O=y$ satisfy the
assumptions of Lemma~\ref{Lemma:Sees}, and thus, by the Lemma, $y$ sees one of the vertices
$w \in Reg2$, call it $w_{k_1+1}$. Without loss of generality, $[p,w_{k_1+1}] \in G_{k_1}$. Define
$T_{k_1+1} = T_{k_1} \setminus \{[p,w_{k_1+1}]\} \cup \{[y,w_{k_1+1}]\}$. Since $y$ sees $w_{k_1+1}$,
the edge $[y,w_{k_1+1}]$ does not cross any other edge of $T_{k_1}$. Thus, $[T_{k_1},T_{k_1+1}] \in G(P)$.

By continuing in the same fashion, we obtain a sequence $T_{k_1+1},T_{k_1+2},\ldots,T_{k_1+k_2}$ such that
$[T_i,T_{i+1}] \in G(P)$ for all $i$, and in $T_{k_1+k_2}$, all points in $P \cap Reg1$ are connected
to $x$ and all points in $P \cap Reg2$ are connected to $y$.

\item \textbf{Phase 3.} In this phase, we add the edge $[x,y]$ and remove the edge $[p,q]$ (that otherwise
closes a cycle $([x,y],[y,q],[q,p],[p,x])$). Formally, we define $T_{k_1+k_2+1} =
T_{k_1+k_2} \setminus \{[p,q]\} \cup \{[x,y]\}$. Note that the edge $[x,y]$ does not cross any edge of
$T_{k_1+k_2}$, as in $T_{k_1+k_2}$, all points of $P \cap Reg2$ are connected to $y$.

\item \textbf{Region 4 (Reg4).} This region contains all points that lie below $\ell(p,q)$ and on the
right of $\ell(p,x)$. Assume $|P \cap Reg4|=k_3$. As all points of $P$ except for $p,q,x,y$
belong to one of the regions: $Reg1,Reg2,Reg4$, we have $k_1+k_2+k_3=n-4$. We construct
a sequence of SSTs $T_{k_1+k_2+2},\ldots,T_{k_1+k_2+k_3+1}$ such that in $T_{k_1+k_2+k_3+1}$,
all points in $P \cap Reg1$ are connected to $x$ and all points in $P \cap (Reg2 \cup Reg4)$ are connected
to $y$. Hence, $T_{k_1+k_2+k_3+1}=T_{n-3}$ is an $xy$-brush that satisfies $d_{G(P)} (T,T_{n-3}) = n-3$,
as desired.

Let $G_{k_1+k_2+1}$ be the geometric graph whose edges are all edges of $T_{k_1+k_2+1}$ of the
form $[p,w]$ or $[q,w]$, where $w$ lies in $Reg4$. The graph $G_{k_1+k_2+1}$ and the point $O=y$
satisfy the assumptions of Lemma~\ref{Lemma:Sees}, and thus, by the Lemma, $y$ sees one of the vertices
$w \in Reg4$, say $w_{k_1+k_2+1}$. Without loss of generality, $[p,w_{k_1+k_2+1}] \in G_{k_1+k_2+1}$. Define
$T_{k_1+k_2+2} = T_{k_1+k_2+1} \setminus \{[p,w_{k_1+k_2+1}]\} \cup \{[y,w_{k_1+k_2+1}]\}$. Since $y$ sees
$w_{k_1+k_2+1}$, the edge $[y,w_{k_1+k_2+1}]$ does not cross any other edge of $T_{k_1}$. (It should be noted that the fact that
$y$ lies outside $Reg4$ does not disturb us, as all points in Region~2 (that is the region $y$ sees $Reg4$
through) are already connected to $y$, and $P$ is in general position.) Thus, $[T_{k_1+k_2+1},T_{k_1+k_2+2}] \in G(P)$.

By continuing in the same fashion, we obtain a sequence $T_{k_1+k_2+2},T_{k_1+k_2+3},\ldots,T_{k_1+k_2+k_3+1}$ such that
$[T_i,T_{i+1}] \in G(P)$ and $T_{k_1+k_2+k_3+1}=T_{n-3}$ is the desired $xy$-brush.
\end{enumerate}

\medskip

\noindent \textbf{Case 2:  $x,y$ are on different sides of $\ell(p,q)$.}

This case is treated in a fashion similar to Case~1. We divide all points of $P \setminus \{p,q\}$ into
two regions, where Region 1 (Reg1) consists of the points above $\ell(p,q)$ and Region 2 (Reg2)
consists of the points below $\ell(p,q)$ (see Figure~\ref{fig21p14new}). In the first phase, we
consider the points of $Reg1$ (excluding $x$), disconnect them from $p$ or $q$ and connect them to $x$ instead.
The procedure is identical to the procedure of the first phase of Case~1. In the second phase, we
consider the points of $Reg2$ (excluding $y$), disconnect them from $p$ or $q$ and connect them to $y$ instead.
The procedure is, again, similar. Finally, in the third phase we remove the edge $[p,q]$ and
insert the edge $[x,y]$ instead. (As at this stage, all points in $P \setminus \{p,q,x,y\}$
are connected to either $x$ or $y$, this step does not create crossings.) As a result, we obtain
a sequence $T=T_0,T_1,T_2,\ldots,T_{n-3}$, such that $[T_i,T_{i+1}] \in G(P)$ for all $i$ and
$T_{n-3}$ is an $xy$-brush, as desired.

\begin{figure}[tb]
\begin{center}
\scalebox{0.8}{
\includegraphics{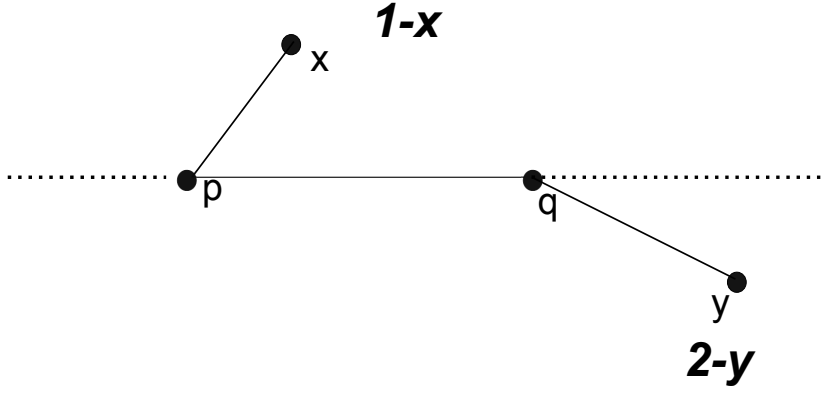}
} \caption{An illustration to the proof of Proposition~\ref{Prop:Brush3}: Case 2.
The notation $(i) - (v)$ where $i \in \{1,2\}$ and $v \in \{x,y\}$ means that
the points in Region~$i$ are connected to $v$.}
\label{fig21p14new}
\end{center}
\end{figure}

As Cases~1,2 include all possible placements of $x,y,p,q$, the proof is complete.
\end{proof}

Now we are ready to identify every brush completely.
\begin{corollary}\label{Cor:Brush}
Let $T \in V(G(P))$ be a $pq$-brush and let $x \in P$, $x \neq p,q$. Given $G(P)$, we
can determine whether $[p,x] \in T$ or $[q,x] \in T$.
\end{corollary}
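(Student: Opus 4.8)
The plan is to reduce the ``which center does $x$ attach to'' question to the already-established Proposition~\ref{Prop:Brush3}, by choosing an auxiliary third point and playing off the three pairwise relations. Concretely, given the $pq$-brush $T$ and a vertex $x \neq p,q$, pick any fourth vertex $y \in P \setminus \{p,q,x\}$ (such a $y$ exists since $n \geq 5$). In $T$, the leaf edge at $x$ emanates from $p$ or from $q$, and likewise the leaf edge at $y$; by Proposition~\ref{Prop:Brush3} — whose hypothesis is checkable from $G(P)$ alone, since we have already identified all brushes and all stars — we can determine whether $x$ and $y$ hang from the \emph{same} internal vertex of $T$ or from \emph{different} ones. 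This gives us the partition of $P \setminus \{p,q\}$ into the two leaf-classes of $T$, namely $\{x : [p,x] \in T\}$ and $\{x : [q,x] \in T\}$, but \emph{without} yet knowing which class is the ``$p$-class'' and which is the ``$q$-class.''

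First I would fix a reference vertex $x_0 \in P \setminus \{p,q\}$ and run Proposition~\ref{Prop:Brush3} for every other $x \in P \setminus \{p,q,x_0\}$, using $(x_0,x)$ as the pair of leaves; this sorts all leaves of $T$ into ``same side as $x_0$'' versus ``opposite side from $x_0$,'' so we know the two leaf-classes $A \ni x_0$ and $B$ as unordered sets. Next I would break the symmetry between $A$ and $B$. The point is that the labels $p$ and $q$ are \emph{not} interchangeable in the data we have: we know which star is $S(p)$ and which is $S(q)$, and we know the \emph{type} $(k,\ell)$ of the brush $T$, i.e. that $\mathrm{val}(T,p) = k+1$ and $\mathrm{val}(T,q) = \ell+1$. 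So if $k \neq \ell$, one leaf-class has size $k$ and the other size $\ell$, and the class of size $k$ is the $p$-class — done immediately. The only genuine obstacle is the degenerate case $k = \ell$, where both leaf-classes have the same cardinality and type information alone cannot tell $A$ from $B$.

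To resolve $k = \ell$ I would use distances to the stars in a finer way than in Section~\ref{sec:sub:brushes}. The idea is: modify $T$ by moving exactly one leaf from class $A$, say $x_0$, over to the other center, obtaining a brush $T^{A}$ of type $(k-1,\ell+1)$ or $(k+1,\ell-1)$ depending on whether $x_0 \in A$ is the $p$-class or the $q$-class; this $T^A$ is a neighbor of $T$ in $G(P)$ by (the construction in) Lemma~\ref{Lemma:Brush1}, and from $G(P)$ we can compute $d_{G(P)}(T^A, S(p))$. If $x_0$ was attached to $p$ in $T$, then $T^A$ has $\mathrm{val}(T^A,p) = k$, hence (by Proposition~\ref{Prop:Brush2} applied to $T^A$, whose type we now read off) $d_{G(P)}(T^A,S(p)) = \ell+1$; if $x_0$ was attached to $q$, then $\mathrm{val}(T^A,p) = k+1$ and the distance is $\ell-1$. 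Since $\ell+1 \neq \ell-1$, this single distance computation determines which center $x_0$ attaches to, and then the $A/B$ labelling is fixed, which determines the center of \emph{every} leaf. The only subtlety to verify is that the specific neighbor $T^A$ we pick is recognisable inside $G(P)$ as ``the brush obtained from $T$ by re-attaching $x_0$'' — but we can simply enumerate all neighbors of $T$ that are brushes of the appropriate type with the appropriate same-side partition (which we already know from Proposition~\ref{Prop:Brush3} applied to them), and all such neighbors give the same distance to $S(p)$, so picking any one suffices. I expect this last bookkeeping — phrasing the selection of $T^A$ purely in terms of abstract-graph data — to be the only place needing care; everything else is a direct appeal to Propositions~\ref{Prop:Brush1}, \ref{Prop:Brush2}, \ref{Prop:Brush3} and Lemma~\ref{Lemma:Brush1}.
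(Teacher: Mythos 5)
Your reduction to Proposition~\ref{Prop:Brush3} (partitioning the leaves of $T$ into two unlabeled classes via a reference leaf $x_0$) and the class-size argument for $k\neq\ell$ are correct, and your route is genuinely different from the paper's, which instead embeds $T$ in a path $S(p)=T_0,T_1,\ldots,T_{n-2}=S(q)$ of brushes of monotonically shifting type and identifies $T_1,T_2,\ldots$ inductively, anchoring the symmetry-breaking at $T_1$, whose type is never symmetric. However, your treatment of the symmetric case $k=\ell$ has a genuine gap: you assert that the tree $T^A$ obtained by re-attaching the \emph{specific} leaf $x_0$ to the other center is a neighbor of $T$ in $G(P)$ ``by the construction in Lemma~\ref{Lemma:Brush1}.'' Lemma~\ref{Lemma:Brush1} gives no such thing: its construction moves the particular leaf whose edge makes the minimal angle with $[p,q]$, precisely because for an arbitrary leaf $x_0$ the new edge joining $x_0$ to the other center may cross edges of $T$ emanating from its current center. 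In that case $T^A$ is not a simple tree, is not a vertex of $G(P)$ at all, and your fallback of ``enumerate all neighbors of $T$ that are brushes with the appropriate partition'' returns the empty set; the uniqueness of the distance value among such neighbors is beside the point when there are none. So as written the procedure can stall exactly in the one case where you need it.

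The repair is easy, and you should make it explicit: do not insist on moving $x_0$. Every neighbor $T'$ of $T$ that is itself a $pq$-brush is necessarily obtained from $T$ by moving a single leaf $w$ from one center to the other (the deleted edge cannot be $[p,q]$, so it is a leaf edge $[c,w]$, and the added edge must reconnect $w$ to the other center), and at least one such neighbor exists by Lemma~\ref{Lemma:Brush1}, since a brush has $k,\ell\geq 1$. Take any such $T'$, identify the moved leaf $w$ by comparing the Proposition~\ref{Prop:Brush3}-partitions of $T$ and $T'$, and observe that $d_{G(P)}(T',S(p))-d_{G(P)}(T,S(p))=+1$ if $w$ left the $p$-class and $-1$ if it joined it. This labels the class containing $w$, hence both classes, and the rest of your argument goes through; with this modification your proof is complete and is arguably more economical than the paper's step-by-step identification along the whole path.
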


\begin{proof}
It follows from the proof of Proposition~\ref{Prop:Brush2} that $T$ belongs to a path
\[
\langle S(p)=T_0, T_1, T_2,\ldots,T_{n-3},T_{n-2}=S(q) \rangle
\]
in $G(P)$ such that in $T_i$,
$\deg(p)=n-1-i$ and $\deg(q)=i+1$. Consider $T_1$. Since it has only one vertex $x_1 \neq p,q$
that is connected to $q$, we can use Proposition~\ref{Prop:Brush3} to determine it. (Here we
use the assumption that $n \geq 5$.) We can then
move to $T_2$ and use Proposition~\ref{Prop:Brush3} again to determine the additional vertex $x_2$
connected to $q$ in $T_2$. (Note that $[x_1,q] \in E(T_2)$, and thus, $x_2$ is identified as the
unique vertex $x$ such that the leaf edge of $T_2$ that emanates from it has the same second endpoint
as the leaf edge that emanates from $x_1$.) We can continue in the same fashion and get a complete
identification of $T_1,T_2,\ldots,T_{n-3}$, including $T$.
\end{proof}

\subsection{Completing the Proof of Theorem~\ref{Thm:Main}}
\label{sec:sub:completion}

Our last step toward the identification of the geometric structure of $K(P)$ is the following
easy proposition.
\begin{proposition}\label{Prop:Brush4}
Let $p,q,x,y$ be four different points in $P$. The segments $[p,x]$ and $[q,y]$ do not cross if and only if
there exists a $pq$-brush that includes the edges $[p,x]$ and $[q,y]$.
\end{proposition}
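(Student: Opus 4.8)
The plan is to prove both directions directly by geometric constructions, without appealing to distances in $G(P)$. First suppose $[p,x]$ and $[q,y]$ do not cross. I want to build a $pq$-brush $T$ containing both edges. Since $[p,x]$ and $[q,y]$ are disjoint (they use four distinct points) and do not cross, the segments $[p,x], [q,y]$ together with $[p,q]$ form a crossing-free geometric graph on $\{p,q,x,y\}$; in fact $[p,q]$ cannot cross either $[p,x]$ or $[q,y]$ since it shares an endpoint with each. Now I process the remaining points of $P$ one at a time. Using Lemma~\ref{Lemma:Sees} as the engine: at each stage I have a crossing-free geometric graph $G$ whose vertex set is $\{p,q\}$ together with the points already attached, every edge of which joins a point to $p$ or to $q$ (plus the fixed edges $[p,x],[q,y]$), and the remaining unattached points lie in various cells of the plane cut by the lines $\mathrm{aff}(e)$, $e \in E(G)$; I pick an unattached point $w$, look at the (finite) arrangement of these lines, and note that $w$ lies in some open cell, so $w \notin \bigcup\{\mathrm{aff}(e)\}$, and hence by Lemma~\ref{Lemma:Sees} (applied with $O = w$) $w$ sees some already-placed vertex — but I actually want $w$ to see $p$ or $q$. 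This needs a small adjustment: instead of Lemma~\ref{Lemma:Sees} verbatim I run the ``rotating ray'' argument of that lemma's proof directly from $w$ to conclude $w$ sees $p$ or $q$ (the argument there in fact terminates at a vertex of $V_0 = \{p,q\}$ only after first trying the $W$-side, and one can just as well run it to land on $p$ or $q$; alternatively, connect $w$ to whichever of $p,q$ it sees, and if it sees neither, it sees some $w' \in W$ already attached to, say, $p$, and the segment $[w,w']$ lies in the cone $\angle(w,w',p)$ which contains no other vertex, so $[w,p] \subset \mathrm{conv}(w,w',p)$ is also crossing-free). Either way I attach $w$ to $p$ or $q$ by a non-crossing leaf edge and the invariant is maintained. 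After all of $P$ is processed I obtain a spanning tree all of whose edges join a point to $p$ or $q$, which contains $[p,q], [p,x], [q,y]$ — that is, a $pq$-brush containing $[p,x]$ and $[q,y]$, as required.

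For the converse, suppose some $pq$-brush $T$ contains both $[p,x]$ and $[q,y]$. Then $[p,x]$ and $[q,y]$ are two edges of the simple (non-crossing) spanning tree $T$, so they do not cross. This direction is immediate.

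The main obstacle is the subtlety flagged above: Lemma~\ref{Lemma:Sees} as stated only guarantees visibility of \emph{some} vertex of $W$, whereas to grow the brush I must connect the new point specifically to $p$ or to $q$ by an edge that crosses nothing. The clean fix is the last observation: if the new point $w$ sees a vertex $w'$ already joined (say) to $p$, then the triangle $\triangle w\,w'\,p$ contains no vertex of the current graph in its interior other than possibly along $[p,w']$, because $[w,w']$ was the first vertex hit when rotating the ray toward $p$; hence the segment $[w,p]$ lies inside this empty triangle and meets no edge, so $w$ can be attached to $p$. I should also double-check the degenerate sub-case where the rotating ray, aimed from $w$ toward $p$ (or $q$), hits another vertex before reaching it — but this is exactly the situation handled in the proof of Lemma~\ref{Lemma:Sees}, and the same case analysis (the hit vertex cannot be in $V_0$ by the angle inequalities, so it is in $W$, reducing to the previous observation) goes through verbatim. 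Everything else is routine bookkeeping on the invariant.
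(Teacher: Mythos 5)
The converse direction of your argument is fine and coincides with the paper's. The forward direction, however, has a genuine gap at its key step. You need to show that each remaining point $w$ can always be attached to $p$ or to $q$ by a non-crossing edge, and the justification you offer is invalid. Lemma~\ref{Lemma:Sees} runs in the wrong direction (it gives visibility of some $w\in W$ from an external point $O$, not visibility of $V_0=\{p,q\}$ from $w$), and your proposed fix --- ``if $w$ sees a vertex $w'$ joined to $p$, then $\triangle w\,w'\,p$ is empty of vertices, so $[w,p]$ is crossing-free'' --- is simply false. Seeing $w'$ only controls the segment $[w,w']$, not the triangle $\triangle w\,w'\,p$; that triangle can contain an attached vertex $u$ with $[q,u]$ entering through the side $[w,p]$, in which case $[w,p]$ is blocked even though $w$ sees $w'$ and $[p,w']$ is an edge. (Concretely: $p=(0,0)$, $q=(10,0)$, $w=(5,1)$, $w'=(0.1,5)$ attached to $p$, $u=(1,1.5)$ attached to $q$; the graph is crossing-free, $w$ sees $w'$, yet $[q,u]$ crosses $[w,p]$.) So the algorithm as described can direct you to an illegal attachment. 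The underlying claim \emph{is} true --- if $[w,p]$ is blocked, the blocker must be an edge $[q,u]$ (edges at $p$ share the endpoint $p$ with $[w,p]$), and likewise a blocker of $[w,q]$ must be some $[p,v]$; the two blocking subsegments are cevians of $\triangle pwq$ from distinct vertices and hence would cross, contradicting simplicity --- but that argument is nowhere in your write-up, and without it the induction does not close.

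It is also worth noting that the paper disposes of this direction in one line, with no induction and no visibility lemma: since $[p,x]$ and $[q,y]$ are disjoint compact convex sets, they are strictly separated by a line $\ell$; connect every point on the $p$-side of $\ell$ to $p$ and every point on the $q$-side to $q$, and add $[p,q]$. Edges on opposite sides of $\ell$ lie in disjoint open half-planes, edges sharing an endpoint cannot cross by general position, so the result is a simple $pq$-brush containing $[p,x]$ and $[q,y]$. If you want to keep your incremental construction, you must replace the Lemma~\ref{Lemma:Sees} patch by the cevian argument above; otherwise the separating-line construction is both shorter and complete.
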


\begin{proof}
It is clear that if $[p,x]$ and $[q,y]$ cross then no $pq$-brush can contain both edges
$[p,x]$ and $[q,y]$, as a brush is a \emph{simple} tree. If $[p,x]$ and $[q,y]$ do not cross, then
they are strictly separated by some line $\ell$. In such a case, we can define a $pq$-brush in which all vertices
that lie on the same side of $\ell$ as $p$ are connected to $p$, and all other vertices are connected
to $q$ (see Figure~\ref{fig22p14}). This $pq$-brush includes both $[p,x]$ and $[q,y]$.

\begin{figure}[tb]
\begin{center}
\scalebox{0.8}{
\includegraphics{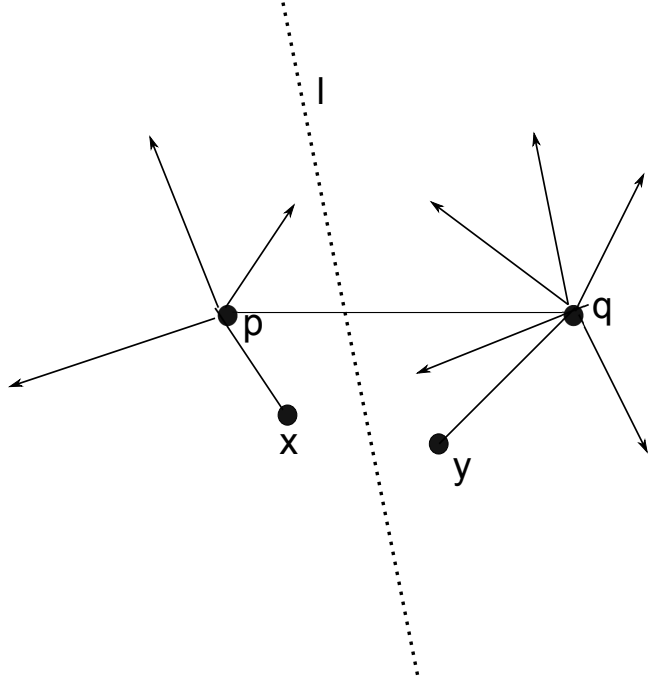}
} \caption{An illustration to the proof of Proposition~\ref{Prop:Brush4}.}
\label{fig22p14}
\end{center}
\end{figure}

\end{proof}

Now we are ready to prove our main theorem.

\begin{proof}[Proof of the Main Theorem]
Consider the geometric tree graph $G(P)$. The vertices $x,y,p,q$ are identified with the stars
$S(x),S(y),S(p),S(q) \in G(P)$. By Theorem~\ref{Thm:Brush}, we can identify all $pq$-brushes
in $G(P)$. By Corollary~\ref{Cor:Brush}, we can check for each of them whether it includes
both $[p,x]$ and $[q,y]$ or not. By Proposition~\ref{Prop:Brush4}, if none of the $pq$-brushes
contains  both $[p,x]$ and $[q,y]$, then these segments cross, and otherwise, they do not
cross. This completes the proof of the theorem.
\end{proof}

\section{The Automorphism Group of the Tree Graph of $K_n$}
\label{sec:general}

In this section we consider the abstract (i.e., non-geometric) graph $K_n$. Recall that,
as defined in the introduction, the vertices of the tree graph $G(K_n)$ are all the spanning 
trees of $K_n$, and two spanning trees are adjacent if they differ in exactly two edges.
We prove Theorem~\ref{Thm:Main-General}, stating that the automorphism group of $G(K_n)$
is isomorphic to $\mathrm{Aut}(K_n)=S_n$.

It turns out that the theorem can be proved by roughly the same methodology as the proof of
Theorem~\ref{Thm:Main}, as shown below. Altogether, the proof in the abstract setting 
turns out considerably simpler than its geometric counterpart.

\medskip

\noindent \textbf{Identification of stars in $G(K_n)$.} Denote $G=G(K_n)$, and let 
$V(K_n)=\{v_1,\ldots,v_n\}$. As in the geometric case, our first step is identification of
the vertices of $G$ that represent stars. Unlike the geometric case, here the identification 
is immediate.
\begin{claim}
Let $T \in V(G)$. Then $T$ represents a star if and only if for any $T' \in V(G)$, 
$d_G(T,T') \leq n-2$. 
\end{claim}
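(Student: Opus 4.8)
The plan is to show a two-sided estimate on the eccentricity of a vertex $T$ in $G$: that stars have eccentricity exactly $n-2$, while every non-star has some vertex at distance $>n-2$. The lower bound $d_G(T,T')\ge\frac12|\Delta(T,T')|$ already noted for the geometric case holds verbatim here, since adjacency in $G(K_n)$ is defined by a single edge swap. So the whole content is: (a) if $T$ is a star, then $d_G(T,T')\le n-2$ for every $T'$; and (b) if $T$ is not a star, then there is a $T'$ with $d_G(T,T')\ge n-1$.

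For direction (a), let $T=S(v)$ be the star centered at $v$, and let $T'$ be an arbitrary spanning tree of $K_n$. I would transform $T$ into $T'$ in at most $n-2$ swaps by the standard "peeling" argument: $T$ has $n-1$ edges, $T\cap T'$ has some number $j$ of edges in common, and I want to show $d_G(T,T')\le n-1-j\le n-2$ (the last inequality since $T\neq T'$ forces $j\ge 1$ — in fact even $j\ge 0$ suffices if we argue $d_G\le n-2$ directly, but using $j\ge 1$ is cleanest). Concretely, pick any edge $f\in T'\setminus T$; adding $f$ to $T$ creates a unique cycle, which contains at least one edge $e\in T\setminus T'$ (otherwise $T'$ would contain a cycle), and $T\setminus\{e\}\cup\{f\}$ is again a spanning tree, adjacent to $T$ in $G$, sharing strictly more edges with $T'$. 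Iterating, after at most $n-1-j$ steps we reach $T'$. Since $j\ge 1$ when $T\neq T'$, this gives $d_G(T,T')\le n-2$; and for $T=T'$ the distance is $0$. Hence every star has eccentricity $\le n-2$.

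For direction (b), suppose $T$ is not a star, so $T$ has diameter $\ge 3$ and contains an internal edge $[a,b]$ with both $a,b$ non-leaves. I would exhibit a spanning tree $T'$ with $|\Delta(T,T')|\ge 2(n-1)$, i.e.\ with $T\cap T'=\emptyset$, since then $d_G(T,T')\ge\frac12\cdot 2(n-1)=n-1$. The natural candidate is a star $S(w)$ whose center $w$ is a leaf of $T$ not adjacent to the center of... — more carefully, I need $S(w)$ edge-disjoint from $T$, i.e.\ $w$ should not be a neighbor of any... no: $S(w)$ and $T$ share an edge exactly when $T$ contains an edge at $w$, which it always does. So a star will not work; instead I take $T'$ to be another non-star. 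A clean choice: since $n\ge 4$ (indeed $n\ge 5$ in the paper's running hypothesis, though here $n\ge 3$ should be handled or the claim restated), and $T$ has an internal edge, pick two leaves $x,y$ of $T$ hanging off different internal vertices and build a spanning tree $T'$ on the same vertex set using only edges absent from $T$ — for instance a path or a star-like tree centered at some vertex, chosen so that no edge of $T$ survives. The mild obstacle is verifying such an edge-disjoint spanning tree exists for every non-star $T$; this follows because $T$, being a tree, has only $n-1$ of the $\binom n2$ edges, and for $n\ge 4$ one can always find a spanning tree in the complement (e.g.\ the complement of a tree on $\ge 4$ vertices is connected, apart from the trivial exceptions $K_{1,n-1}$ whose complement has an isolated vertex — but $K_{1,n-1}$ is precisely a star, which is excluded). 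Thus for every non-star $T$ the complement $\overline{T}$ is a connected graph on $n$ vertices, hence contains a spanning tree $T'$, and $T\cap T'=\emptyset$ gives $d_G(T,T')\ge n-1$.

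The main obstacle I anticipate is the edge-case bookkeeping in (b): one must check that the only spanning trees whose complement is disconnected are the stars themselves (so that the dichotomy is sharp), and handle small $n$ cleanly. Once that observation — $\overline{T}$ disconnected $\iff$ $T$ is a star — is in hand, both directions combine to give exactly: $T$ is a star $\iff$ $\mathrm{ecc}_G(T)\le n-2$, which is the claim.
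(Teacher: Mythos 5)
Your proposal is correct and takes essentially the same route as the paper: the equality $d_G(T,T')=\frac12|\Delta(T,T')|$ (your exchange/peeling argument), the fact that a star shares an edge with every spanning tree (giving eccentricity $\leq n-2$), and the observation that the complement of a non-star tree is connected, yielding an edge-disjoint spanning tree at distance $n-1$. One small slip: in direction (a) the inequality $j\geq 1$ is not forced by $T\neq T'$ (edge-disjoint spanning trees exist in general), but by $T$ being a star --- every spanning tree has an edge at the center $v$, and all such edges lie in $S(v)$ --- which is exactly the fact you invoke yourself in part (b).
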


\begin{proof}
We observe that in the abstract case, $d_G(S_1,S_2) = \frac12 |\Delta(S_1,S_2)|$
for any $S_1,S_2 \in V(G)$. (In the geometric case, we could only say that
$d_{G(P)}(S_1,S_2) \geq \frac12 |\Delta(S_1,S_2)|$.) 

Assume that $T$ is a star. Since any spanning tree $T'$ of $K_n$ shares at least one edge with
$T$, we have $d_G(T,T') = \frac12 |\Delta(T,T')| \leq n-2$. 

On the other hand, if $T$ is not a star then it is easy to see that the graph 
$T^c = K_n \setminus T$ is connected, and thus, there exists $T' \in V(G)$ that does not share
an edge with $T$. Hence, $d_G(T,T') = \frac12 |\Delta(T,T')| = n-1$.
\end{proof}

We note that since the distance in $G$ between any pair of stars in $n-2$, it follows that 
the quantity $\max_{\{T' \in V(G): T'\neq T\}} d_G(T,T')$ equals $n-2$ if $T$ represents a star
and $n-1$ otherwise. Consequently, the set of vertices that represent stars is exactly 
the {\it center} of the graph $G$.  

These vertices can be identified with the vertices of $K_n$ in an arbitrary way (as any automorphism of
$K_n$ clearly induces an automorphism of $G$). So, we call the $n$ vertices in $G$
that represent stars $S(v_1),\ldots,S(v_n)$ (in some arbitrary order).

\medskip

\noindent \textbf{Valences of vertices in $G(K_n)$.} The next simple step is identifying, for any 
$T \in V(G)$ and any vertex $v$, what is the valence of $v$ in $T$. Note that we were not able to 
obtain such an identification in the geometric setting.
\begin{claim}
Let $T \in V(G)$ and $v \in V(K_n)$. The valence $\delta_T(v)$ of $v$ in $T$ is $n-1-d_G(T,S(v))$.
\end{claim}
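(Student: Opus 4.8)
The plan is to relate the graph distance $d_G(T,S(v))$ directly to the number of edges of $T$ that are \emph{not} incident to $v$. Since in the abstract setting $d_G(S_1,S_2)=\frac12|\Delta(S_1,S_2)|$ for any two spanning trees (as noted in the proof of the previous claim), it suffices to compute $|\Delta(T,S(v))|$. The star $S(v)$ has $n-1$ edges, all incident to $v$. If $\delta_T(v)=k$, then $T$ has exactly $k$ edges incident to $v$, of which $k$ lie in $S(v)$ as well (every edge incident to $v$ is an edge of $S(v)$), so $T\cap S(v)$ consists of precisely those $k$ edges. Hence $|\Delta(T,S(v))|=(n-1-k)+(n-1-k)=2(n-1-k)$, giving $d_G(T,S(v))=n-1-k$, i.e. $\delta_T(v)=n-1-d_G(T,S(v))$.

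The one point that needs a word of care is the claim $T\cap S(v)=\{e\in E(T):v\in e\}$; this is immediate because an edge of $S(v)$ is, by definition, an edge of $K_n$ incident to $v$, and conversely every edge of $T$ incident to $v$ is such an edge, so it lies in $E(S(v))$. I would state this explicitly. One should also observe that the formula is consistent at the extreme: when $T=S(v)$ we get $d_G(T,S(v))=0$ and $\delta_T(v)=n-1$, as expected.

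I do not anticipate a genuine obstacle here — the whole content is the identity $d_G=\frac12|\Delta|$, which is already available, together with a one-line count of the symmetric difference. The step that does the work is simply recognizing that $S(v)$'s edge set is exactly the set of $K_n$-edges at $v$, so that the overlap $T\cap S(v)$ records precisely the valence of $v$ in $T$. Everything else is arithmetic.
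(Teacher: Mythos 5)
Your proof is correct and follows the same route as the paper: count $|\Delta(T,S(v))|$ by observing that $T\cap S(v)$ is exactly the set of edges of $T$ incident to $v$, then apply the identity $d_G=\frac12|\Delta|$ from the preceding claim. You merely spell out the symmetric-difference count that the paper leaves as ``it is clear.''
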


\begin{proof}
Since all edges in $S(v)$ emanate from $v$, it is clear that
$\frac{1}{2} |\Delta(T,S(v))| = n-1-\delta_T(v)$. As $d_G(T,S(v)) = \frac12 |\Delta(T,S(v))|$, the
assertion follows.
\end{proof}

\medskip

\noindent \textbf{Max-cliques in $G(K_n)$.} Our next step is examination of max-cliques
in $G$. As in the geometric case, we would like to determine whether a given max-clique
is a $U$-clique or an $I$-clique.
\begin{proposition}\label{Lemma:UI-cliques}
Given a max-clique $\mathcal{C}$ of $G$, we can determine whether it is a $U$-clique
or an $I$-clique.
\end{proposition}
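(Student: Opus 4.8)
The plan is to distinguish the two types of $\mathcal{C}$ by a single numerical invariant computed purely from valences, which are available to us by the preceding claim: for every $C \in V(G)$ and every $v \in V(K_n)$ we know $\delta_C(v) = n-1-d_G(C,S(v))$. Given a max-clique $\mathcal{C}$, set
\[
M_{\mathcal{C}}(v) := \max_{C \in \mathcal{C}} \delta_C(v) \qquad \text{and} \qquad \Sigma(\mathcal{C}) := \sum_{v \in V(K_n)} M_{\mathcal{C}}(v) ,
\]
both of which are determined by $G$ together with the chosen identification of the stars (and, since two such identifications differ by a permutation of the $v_i$, $\Sigma(\mathcal{C})$ is in fact independent of that choice). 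I claim that $\Sigma(\mathcal{C}) = 2n$ when $\mathcal{C}$ is a U-clique and $\Sigma(\mathcal{C}) = 3n-4$ when $\mathcal{C}$ is an I-clique. Since $2n \neq 3n-4$ for $n \geq 5$, this settles the proposition, and the small cases $n \le 4$ (where $G(K_n)$ has at most $16$ vertices) can be handled separately.

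First I would treat the U-clique case. Writing $\mathcal{C} = U(A,B)$, the graph $U := A \cup B$ is connected with $n$ vertices and $n$ edges, hence has a unique cycle $Z$, of length $k = |\mathcal{C}| \ge 3$, and every member of $\mathcal{C}$ is $U$ with one edge of $Z$ deleted. For $v \notin Z$ one has $\delta_C(v) = \delta_U(v)$ for all $C \in \mathcal{C}$; for $v \in Z$, since $k \ge 3$ there is an edge of $Z$ not incident to $v$, and deleting it produces a $C \in \mathcal{C}$ with $\delta_C(v) = \delta_U(v)$. Hence $M_{\mathcal{C}}(v) = \delta_U(v)$ for every $v$, and $\Sigma(\mathcal{C}) = \sum_v \delta_U(v) = 2|E(U)| = 2n$.

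Next the I-clique case. Writing $\mathcal{C} = I(A,B)$, the graph $I := A \cap B$ is a spanning forest of $K_n$ with exactly two components and $n-2$ edges, and every member of $\mathcal{C}$ is $I$ with one extra edge joining the two components. For each $v$ there is such a joining edge incident to $v$ (an edge from $v$ to any vertex of the opposite component), so $M_{\mathcal{C}}(v) = \delta_I(v)+1$; hence $\Sigma(\mathcal{C}) = \sum_v (\delta_I(v)+1) = 2|E(I)| + n = 2(n-2)+n = 3n-4$.

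The argument is short, and the only points requiring real care are the two structural observations that make $M_{\mathcal{C}}(v)$ attain the expected value for \emph{every} $v$ — that the unique cycle of a union has length at least $3$ (so some cycle edge avoids any prescribed vertex), and that in a two-component spanning forest every vertex has a joining edge incident to it — together with the exceptional collision $2n = 3n-4$ at $n = 4$, dealt with by direct inspection of the tiny graph $G(K_4)$. I expect no genuine obstacle here; this is precisely where the abstract setting is easier than the geometric one, since in $G(P)$ the corresponding valence identities were only inequalities.
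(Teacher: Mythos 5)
Your proof is correct for $n \geq 5$, and it takes a genuinely different route from the paper's. The paper first observes that $|U(S,T)|$ equals the length of the unique cycle of $S \cup T$ (hence lies in $\{3,\dots,n\}$) while $|I(S,T)| = k(n-k)$, so that for $n \geq 5$ the clique size alone decides the type except when $|\mathcal{C}| = n-1$; that ambiguous case is then resolved by a bespoke leaf-counting argument (in a size-$(n-1)$ U-clique every vertex but one is a leaf in at most two trees of the clique, whereas a size-$(n-1)$ I-clique containing two path-trees has a vertex that is a leaf in $n-2$ of them). Your single invariant $\Sigma(\mathcal{C}) = \sum_v \max_{C \in \mathcal{C}} \delta_C(v)$, equal to $2|E(A\cup B)| = 2n$ for U-cliques and to $2|E(A\cap B)| + n = 3n-4$ for I-cliques, handles all clique sizes uniformly and avoids the case split entirely; both approaches rest on the same prerequisite, namely that valences $\delta_C(v)$ are recoverable from distances to stars. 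Your two structural checks (a cycle of length $\geq 3$ has an edge avoiding any given vertex; every vertex of a two-component spanning forest of $K_n$ has an incident connecting edge) are exactly right and ensure the maximum is attained at every $v$. The one loose end is $n = 4$, where $2n = 3n-4$; you flag it but do not carry out the inspection. Note that the paper's dichotomy degenerates there too (for $n=4$ a size-$4$ clique can be a $4$-cycle U-clique or a $2{+}2$ I-clique, so "ambiguity only at size $n-1$" also fails), so this is a shared, minor omission rather than a defect specific to your argument; if you want the statement literally for all $n$, you should either exhibit the finite check for $G(K_4)$ or supplement $\Sigma$ with a second invariant (e.g.\ $\sum_v \min_C \delta_C(v)$, which is $2n - 2|\mathcal{C}|$ for a U-clique and $2n-4$ for an I-clique, and does separate the two types at $n=4$).
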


\begin{proof}
As the discussion in Section~\ref{sec:sub:max-cliques-basics} is purely combinatorial, 
it applies without change to the abstract setting. In particular, all vertices in a 
$U$-clique $U(S,T)$ are obtained from $S \cup T$ by removing an edge from its unique 
cycle, and all vertices in an $I$-clique $I(S,T)$ are obtained from the two-component 
forest $S \cap T$ by adding an edge that connects its two components. As there are 
no geometric restrictions in our case, it follows that $|U(S,T)|$ is equal to the 
size of the unique cycle in $S \cup T$ (and, in particular, is between $3$ and $n$), 
and $|I(S,T)|=k(n-k)$, where $k$ is the number of vertices in one of the connected 
components of $S \cap T$. Hence, determination whether $\mathcal{C}$ is a $U$-clique 
or an $I$-clique is non-trivial only if $|\mathcal{C}|=n-1$.

A $U$-clique $U(S,T)$ is of size $n-1$ if the unique cycle $C$ of $S \cup T$ is of size $n-1$,
which means that $S \cup T$ consists of $C$ plus a single additional edge. Each element of
$U(S,T)$ is obtained from $S \cup T$ by removing one edge from $C$. Assume w.l.o.g. that
$C= \langle v_1,v_2,\ldots,v_{n-1},v_1 \rangle$, and the additional edge is $[v_n,v_1]$. It is clear that
$v_1$ is never a leaf in a tree of $U(S,T)$, $v_n$ is a leaf in all $n-1$ trees of $U(S,T)$,
and each of the vertices $v_2,\ldots,v_{n-1}$ is a leaf in exactly two trees of $U(S,T)$.
In addition, $U(S,T)$ has two trees that are paths. (These are the trees obtained by removing
$[v_1,v_2]$ and $[v_{n-1},v_1]$.) These two trees can be recognized by checking that their
sequence of valences is $1,2,2,\ldots,2,1$.

An $I$-clique $I(S,T)$ is of size $n-1$ if in the two-component forest $S \cap T$, one
component consists of a single vertex $x$. Assume, in addition, that $I(S,T)$ has two elements
that are paths. (Otherwise, we can determine that $I(S,T)$ is an $I$-clique by the previous
paragraph.) This is possible only if the second component of $S \cap T$ is a path $P$. In such a
case, each endpoint of $P$ is a leaf in $n-2$ (of the $n-1$) trees of $I(S,T)$.
As in $U(S,T)$, all vertices except one are leaves in at most two trees of $U(S,T)$,
this property allows to determine that $I(S,T)$ is indeed an $I$-clique.
\end{proof}

\medskip

\noindent \textbf{The automorphism group of $G(K_n)$.} Our last step is to show that the
information on $G$ obtained so far is sufficient for determining uniquely the spanning tree 
represented by each vertex of $G$. Namely, given $T \in V(G)$ and two vertices $p,q \in V(K_n)$, 
we would like to determine whether $[p,q] \in E(T)$ or not. If this is possible, it implies that
$\mathrm{Aut}(G) \cong \mathrm{Aut}(K_n) \cong S_n$, since our determination is unique up to the 
arbitrary identification of the vertices of $G$ that represent stars with the vertices of $K_n$.
Hence, this will complete the proof of Theorem~\ref{Thm:Main-General}. 

First, we consider the case when neither $p$ nor $q$ is a leaf in $T$.
\begin{claim}
Let $T \in V(G)$ and suppose $p,q \in V(K_n)$, $p \neq q$, $\delta_T(p),\delta_T(q) \geq 2$. Then
$[p,q] \in E(T)$ if and only if $T$ has a neighbor $T'$ in $G$ in which the valences of both
$p$ and $q$ are smaller by $1$ than in $T$.
\end{claim}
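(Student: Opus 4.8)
The plan is to prove both directions by exhibiting or ruling out the appropriate neighbor $T'$. Recall that a neighbor of $T$ in $G$ is obtained by removing one edge $e \in E(T)$ and adding one edge $f \notin E(T)$, where $T \setminus \{e\} \cup \{f\}$ is again a spanning tree of $K_n$. The key observation is that removing $e$ splits $T$ into two components, and $f$ must reconnect them; hence $f$ decreases the valence of exactly the two endpoints of $e$ (relative to $T$) and increases the valence of exactly the two endpoints of $f$ (relative to $T$). So the set of vertices whose valence strictly decreases from $T$ to a neighbor $T'$ is precisely $\{e^-, e^+\} \setminus \{f^-, f^+\}$, i.e., it is either the pair of endpoints of $e$, or one of them, or empty, depending on whether $f$ shares $0$, $1$, or $2$ endpoints with $e$. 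In particular, both $p$ and $q$ have smaller valence in $T'$ than in $T$ if and only if $e = [p,q]$ and $f$ is disjoint from $\{p,q\}$.

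For the forward direction, suppose $[p,q] \in E(T)$. Since $\delta_T(p), \delta_T(q) \geq 2$, removing $[p,q]$ leaves two components, one containing $p$ (with at least one other vertex, a neighbor $p'$ of $p$) and one containing $q$ (with at least one other vertex, a neighbor $q'$ of $q$). Set $f = [p', q']$; since $p'$ and $q'$ lie in different components of $T \setminus \{[p,q]\}$ and $f \neq [p,q]$, the graph $T' = T \setminus \{[p,q]\} \cup \{[p',q']\}$ is a spanning tree adjacent to $T$, and in $T'$ the valences of $p$ and $q$ are each exactly one less than in $T$ (while the valences of $p'$ and $q'$ go up). This $T'$ witnesses the condition.

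For the converse, suppose $T$ has a neighbor $T'$ in which both $\delta_{T'}(p) < \delta_T(p)$ and $\delta_{T'}(q) < \delta_T(q)$. By the observation above, the removed edge $e$ must have both $p$ and $q$ among its endpoints, forcing $e = [p,q]$; in particular $[p,q] \in E(T)$. This is essentially immediate once the edge-swap structure is spelled out. I expect the main (minor) obstacle to be making the forward direction airtight: one must verify that such neighbors $p'$ of $p$ and $q'$ of $q$ exist, which is exactly where the hypothesis $\delta_T(p), \delta_T(q) \geq 2$ is used, and that $[p',q'] \neq [p,q]$, which holds because $p' \neq q$ and $q' \neq p$ (they lie in the opposite components). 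No geometric input is needed here, and — as in the earlier claims of this section — the valences $\delta_T(\cdot)$ and the distances $d_G(\cdot,\cdot)$ are all readable from $G$ as an abstract graph, so the criterion is effective.
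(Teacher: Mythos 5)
Your proof is correct and follows essentially the same route as the paper: the converse is the observation that a single edge swap can only decrease the valences of the two endpoints of the removed edge, and the forward direction uses $\delta_T(p),\delta_T(q)\geq 2$ to find a reconnecting edge avoiding both $p$ and $q$. Your specific choice of $p',q'$ as tree-neighbors of $p,q$ is an inessential refinement of the paper's "any edge between the two components using neither $p$ nor $q$."
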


\begin{proof}
If $[p,q] \not \in E(T)$ then no removal of an edge from $E(T)$ can reduce the valences of
both $p$ and $q$, and thus, $T'$ as described in the claim does not exist. On the
other hand, if $[p,q] \in E(T)$ then the graph $\tilde{T} = T \setminus \{[p,q]\}$ is a two-component forest
in which both components are of size $\geq 2$. Hence, there exists an edge $[p',q']$ that connects
the two components of $\tilde{T}$ and uses neither $p$ nor $q$. The tree $T' = T \setminus \{[p,q]\}
\cup \{[p',q']\}$ is a neighbor of $T$ as described in the claim.
\end{proof}

\noindent Now we can assume w.l.o.g. that $p$ is a leaf in $T$. We perform a four-step procedure:
\begin{enumerate}
\item Find a leaf $p'$ of $T$ such that $d_T(p,p') >2$.

\item Find a neighbor $T'$ of $T$ in $G(K_n)$ such that $\delta_{T'}(p),\delta_{T'}(p') \geq 2$.

\item Consider the $U$-clique $U(T,T')$, and find a tree $S \in U(T,T')$ such that $\delta_S(p)=1$
and $\delta_S(p')=2$.

\item We find a vertex $p''$ such that $\delta_S(p'') = \delta_T(p'')-1$. We claim that if $p''=q$
then $[p,q] \in E(T)$, and otherwise, $[p,q] \not \in E(T)$.
\end{enumerate}
We show below that the four steps can indeed be performed, and that they allow to determine
whether $[p,q] \in E(T)$ or not, as claimed.

\medskip

\noindent \textbf{Step 1.} First, we note that if $d_T(p,p')=2$ for all leaves $p'$ of $T$, then $T$ is a
star, and thus, $[p,q] \in E(T)$ for the unique $q$ whose valence in $T$ is greater than $1$ and
$[p,q] \not \in E(T)$ for any other $q$. Hence, we may assume that there exists a leaf $p'$ such
that $d_T(p,p')>2$, and we only have to detect it.

Consider the set of leaves of $T$ other than $p$: $A = \{p_i \in V(K_n): p_i \neq p, \delta_T(p_i)=1\}$.
(Note that we can recognize this set, as we are able to determine valences of vertices.)
We claim that $d_T(p,p') > 2$ if and only if there exists a neighbor $T'$ of $T$
in $G(K_n)$ such that $\delta_{T'}(p)=\delta_{T'}(p')=2$. This allows to detect the desired
$p'$ by going over the elements of $A$, and for each of them, going over the neighbors of $T$
in $G(K_n)$ and checking whether the claimed neighbor exists.

To see that the claim holds, note that a neighbor $T'$ of $T$ satisfies $\delta_{T'}(p)=\delta_{T'}(p')=2$,
if and only if it is of the form $T' = T \setminus \cup \{[p,p']\} \setminus \{e\}$, for an edge $e$
that belongs to the unique cycle $C$ of $T \cup \{[p,p']\}$ and uses neither $p$ nor $p'$. If
$d(p,p')=2$, then $C$ is of length $3$, and thus, it has no edges that use neither $p$
nor $p'$. Thus, no such neighbor $T'$ exists. If $d(p,p')>2$, then $C$ is of length $>3$, and thus,
it includes an edge $e$ that uses neither $p$ nor $p'$. The tree
$T' = T \setminus \{e\} \cup \{[p,p']\}$ is the desired neighbor of $T$.

\medskip

\noindent \textbf{Step 2.} This step is immediate, as the required neighbor $T'$ was already found
in Step~1.

\medskip

\noindent \textbf{Step 3.} The required neighbor $S$ is the tree obtained from $T \cup \{[p,p']\}$
by removing the unique edge of the cycle $C$ that uses $p$ but not $p'$ (call it $[p,p'']$).
The $U$-clique $U(T,T')$ can be recognized using Proposition~\ref{Lemma:UI-cliques}, since 
there exist only two max-cliques of
$G(K_n)$ that include both $T$ and $T'$ -- a $U$-clique and an $I$-clique -- and 
Proposition~\ref{Lemma:UI-cliques} allows us to determine, which of them is the $U$-clique. 
Then, $S$ can be recognized as the unique element of $U(T,T')$ in which the valences of 
$p,p'$ are $1$ and $2$, respectively.

\medskip

\noindent \textbf{Step 4.} It is clear that the unique vertex whose valence in $S$ is smaller by one
than its valence in $T$ is $p''$, as defined in Step~3. By the construction of $C$, $[p,p'']$ is
the unique edge of $E(T)$ that emanates from $p$, i.e., $p''$ is the unique neighbor of $p$ in $T$.
Hence, $[p,q] \in E(T)$ if and only if $q=p''$, as asserted. The vertex $p''$ is detected by
comparing the valences of the vertices in $S$ with their respective valences in $T$.

This completes the proof of Theorem~\ref{Thm:Main-General}.




\begin{thebibliography}{99}

\bibitem{AvisFukuda} D. Avis and K. Fukuda, Reverse Search for Enumeration,
Discrete Applied Mathematics \textbf{65(1)}, pp.~21--46, 1996.

\bibitem{Bondy} J. A. Bondy and R. L. Hemminger, Graph reconstruction -- a survey,
J. Graph Theory \textbf{l} (1977), pp.~227-–268.


\bibitem{Cummins} R. L. Cummins, Hamilton circuits in tree graphs, IEEE Trans.
Circuit Th., \textbf{13(1)} (1966), pp.~82--90.

\bibitem{ES35} P. Erd\H{o}s and G. Szekeres, A Combinatorial
Problem in Geometry, Compositio Math. \textbf{2}, pp.~463–-470, 1935.

\bibitem{Hernando1} M. C. Hernando, F. Hurtado, A. M$\mathrm{\acute{a}}$rquez,
M. Mora and M. Noy, Geometric Tree Graphs of Points in Convex Position, Discrete
Applied Mathematics \textbf{93(1)}, pp.~51--66, 1999.

\bibitem{Hernando} M. C. Hernando, Complejidad de Estructuras
Geom$\mathrm{\acute{e}}$tricas y Combinatorias, Ph.D. Thesis,
Universitat Polit$\mathrm{\acute{e}}$ctnica de Catalunya, 1999 (in
Spanish). Available online at:
http://www.tdx.cat/TDX-0402108-120036/

\bibitem{HH72} C. A. Holzmann and F. Harary, On the tree graph of a matroid, SIAM J. Appl.
Math. \textbf{22} (1972), pp.~187--193.


\bibitem{Kelly} P. J. Kelly, A congruence theorem for trees, Pacific J. Math. \textbf{7} (1957),
pp.~961–-968.

\bibitem{Liu88} G. Liu, On connectivities of tree graphs, J. Graph Theory \textbf{12} (1988),
pp.~453--459.

\bibitem{Rama} S. Ramachandran, Graph reconstruction -- some new developments, AKCE J. Graphs.
Combin., \textbf{1(1)} (2004), pp.~51--61.

\bibitem{Sedlacek} J. Sedl\'{a}\v{c}ek, The reconstruction of a connected graph from its
spanning trees, Mat. \v{C}asopis Sloven. Akad. Vied. \textbf{24} (1974), pp.~307--314.

\bibitem{Ulam} S. M. Ulam, A collection of mathematical problems, Wiley, New York, 1960.

\bibitem{Virginia-PhD} V. Urrutia-Galicia, Algunas Propiedades de Gr$\mathrm{\acute{a}}$ficas
Geom$\mathrm{\acute{e}}$tricas, Ph.D. Thesis, Universidad Auton$\mathrm{\acute{o}}$ma Metropolitana Unidad
Iztapalapa,  M$\mathrm{\acute{e}}$xico D.F., 2001 (in Spanish).



\end{thebibliography}
\end{document}